\documentclass[11pt]{article}
\usepackage[margin=0.995in]{geometry}
\usepackage{booktabs}

\usepackage{xfrac}  

\usepackage{natbib}
 \bibpunct[, ]{(}{)}{,}{a}{}{,}%

\usepackage{float}
\newfloat{algorithm}{t}{lop}
\floatname{algorithm}{Algorithm}

\usepackage{setspace}

\providecommand{\keywords}[1]
{
  \small	
  \textbf{\textit{Keywords---}} #1
}

\usepackage[colorlinks,citecolor=blue,urlcolor=blue,linkcolor=blue]{hyperref} 

\usepackage{mathrsfs}  
\usepackage{dsfont} 

\usepackage{amsmath}
\usepackage{amssymb}
\usepackage{amsthm}
\usepackage{thmtools}

\newtheorem{theorem}{Theorem}
\newtheorem{lemma}{Lemma}
\newtheorem{claim}{Claim}
\newtheorem{proposition}{Proposition}
\newtheorem{corollary}{Corollary}

\theoremstyle{definition}%
\newtheorem{question}{Question}%
\newtheorem{example}{Example}
\newtheorem{assumption}{Assumption}%

\usepackage{afterpage}

\usepackage{color}              

\usepackage{mathtools}
\usepackage{tikz}
\usetikzlibrary{graphs,graphs.standard}
\usetikzlibrary{arrows}
\usetikzlibrary{arrows.meta}
\usetikzlibrary{graphs,graphs.standard,quotes}
\usetikzlibrary{shapes.geometric}
   \usetikzlibrary{math}
   \usetikzlibrary{shapes.misc}
\usetikzlibrary{shapes.multipart,positioning,patterns,backgrounds}

   \usepackage[most]{tcolorbox}
\usetikzlibrary{patterns}

\let\R\Real

 \DeclareMathOperator*{\argmax}{arg\,max}
\DeclareMathOperator*{\argmin}{arg\,min}

\newcommand\redsout{\bgroup\markoverwith{\textcolor{red}{\rule[0.5ex]{2pt}{0.4pt}}}\ULon}
\newcommand\gsout{\bgroup\markoverwith{\textcolor{green}{\rule[0.5ex]{2pt}{0.4pt}}}\ULon}

\usepackage{array}
\newcommand{\PreserveBackslash}[1]{\let\temp=\\#1\let\\=\temp}
\newcolumntype{C}[1]{>{\PreserveBackslash\centering}p{#1}}
\newcolumntype{R}[1]{>{\PreserveBackslash\raggedleft}p{#1}}
\newcolumntype{L}[1]{>{\PreserveBackslash\raggedright}p{#1}}

\RequirePackage[shortlabels]{enumitem}
\usepackage{verbatim}
\AtBeginDocument{\pagenumbering{arabic}}

\newcommand{\bb}{{\bf b}}
\newcommand{\bc}{{\bf c}}
\newcommand{\bd}{{\bf d}}

\newcommand{\bg}{{\bf g}}

\newcommand{\bx}{{\bf x}}
\newcommand{\by}{{\bf y}}
\newcommand{\bz}{{\bf z}}
\newcommand{\bba}{{\bf A}}

\newcommand{\bbd}{{\bf D}}

\newcommand{\bbl}{{\bf L}}

\newcommand{\bbu}{{\bf U}}

\newcommand{\bgamma}{{\boldsymbol{\gamma}}}

\newcommand{\param}{{\boldsymbol{\theta}}}

\newenvironment{itemize*}%
  {\begin{itemize}%
    \setlength{\itemsep}{0.5em}%
    \setlength{\parskip}{0pt}}%
  {\end{itemize}}

\newcommand{\btheta}{{\boldsymbol{\theta}}}

\newcommand{\bpsi}{{\boldsymbol{\psi}}}

\newcommand{\Prb}{\mathbb{P}}  
\newcommand{\Exp}{\mathbb{E}}

\let\R\Real

\def\conv{\operatorname{conv}}	
	
\newcommand{\bone}{{\boldsymbol{1}}}
\newcommand{\bzero}{{\boldsymbol{0}}}

\usepackage{graphicx}
\usepackage{bbm}
\usepackage{ifthen}
   \usetikzlibrary{math}
   \usetikzlibrary{shapes.misc}
   \usetikzlibrary{shapes.multipart,positioning,patterns,backgrounds}
\newcommand{\ubar}[1]{\text{\b{$#1$}}}

\usepackage{multirow}

\usepackage{accents}
    \DeclareMathSymbol{\widetildesym}{\mathord}{largesymbols}{"65}

\title{The Value of  Human Expertise}

\author{Bradley Sturt
}
\date{August 23, 2026}

\begin{document}

\singlespacing

\maketitle

\begin{abstract}
We consider optimization applications with unknown parameters where the decision maker believes that the optimal value of the nominal problem—the optimization problem they would have solved if the true parameters were known—is unlikely to be large. This belief derives from information that humans have that is not captured in datasets, obtained from domain knowledge and interacting with the physical world. We propose an approach to evaluating policies that provides tighter performance guarantees if the decision maker’s belief happens to be correct. Our main result shows that if computing a policy’s worst-case performance is a convex program, then the \emph{value of human expertise}---the maximum improvement in performance guarantees that can be obtained from the belief about the nominal problem---is equal to the minimax gap of a max-min problem. We illustrate our developments in assortment optimization and shortest path problems.\looseness=-1
\end{abstract}
\keywords{Optimization under uncertainty; hyperlocal revenue management; private information. }

\setlength{\parskip}{0.5em}


\section{Introduction} \label{sec:intro}

Consider a parent company that seeks to optimize product assortments at individual brick-and-mortar stores. The parent company has access to the transactional sales data generated by a store's past assortments. However, because the number and variety of the store's past assortments are limited, the parent company has insufficient data to estimate an accurate store-specific discrete choice model or identify an assortment that can be trusted to outperform the store's best past assortment. That said, the past assortments were not chosen \emph{randomly}: rather, the store manager chose the past assortments using private information about local consumer preferences obtained from observing and interacting with customers that visit the store. As a result, the parent company believes it is unlikely that the store's best past assortment was {highly suboptimal}. \emph{Can that belief help the parent company identify a new assortment that outperforms the store's best past assortment?}\looseness=-1

The above example is an instance of a new class of problems that we refer to as \emph{optimization with human expertise}.  These  are optimization applications with unknown parameters where the decision maker believes that the optimal value of the nominal problem---the maximization problem they would have solved if the true parameters were known---is unlikely to be large. This belief can originate from various sources, such as a belief that a past policy was not highly suboptimal. For example, in the setting described above, if the store's best past assortment generated an expected revenue of $\$21$ per customer, then the parent company may believe it is unlikely that the expected revenue of the optimal assortment is much larger than $\$21$. A decision maker's belief about the nominal problem can also derive from other sources such as domain knowledge and intuition.\looseness=-1 %

There are fundamental challenges with trying to  incorporate human beliefs into optimization applications. First, a decision maker's belief may be \emph{incorrect}. For example, in the assortment optimization setting,  it may be the case that the  store's best past assortment was in fact highly suboptimal, perhaps because it was chosen by a store manager with an objective other than maximizing revenue, such as maximizing market share. Second, in practice, a decision maker's belief  often comes in the form of a  \emph{vibe} rather than a numerical assertion. For example, a parent company may believe that it is unlikely that the store's best past assortment was highly suboptimal, but may be hard pressed to transform that belief into a claim that the expected revenue of the store's best past assortment is guaranteed to be within, say, 31.2\% of optimal. Third, even if the decision maker could turn their belief into a numerical bound on the optimal value of the nominal problem, that partial information is generally too crude for identifying the true parameters of the nominal problem and insufficient  for recovering the optimal policy.\looseness=-1

\subsection{Contributions}
The objective of this paper is to propose an approach to incorporating human beliefs that contends with the above challenges, and in doing so,  to shed light on the value that human expertise can provide in this era of algorithm-driven decision making.  The main contributions of this paper are the following. %

\paragraph{Model.}

We formalize optimization with human expertise as follows. A decision maker faces an optimization application defined by a set of feasible policies and an objective function with an unknown parameter.  All of the information from available datasets about the true but unknown parameter has been summarized into an uncertainty set of parameters.\footnote{Our setup (\S\ref{sec:managerialexpertise})   is general and can address applications in which policies and parameters  are infinite-dimensional.  } In addition, the decision maker believes that the optimal value of the nominal problem---the problem they would have solved if the true parameter were known---is unlikely to be large. The decision maker seeks a policy that can be trusted to perform well across the parameters from the uncertainty set, but is guaranteed to perform even better if the optimal value of the nominal problem happens to be small.\looseness=-1

We propose a general approach to {evaluating} and selecting policies in this class of problems. Rather than trying to elicit details about the belief of the decision maker in advance, the approach instead provides a  menu of policies to a decision maker, along with a particular object for each policy that we call a \emph{nominal curve}. In a nutshell, a nominal curve is a set of worst-case performance guarantees for a policy that hold under every  possible upper bound on the optimal value of the nominal problem, or under every possible suboptimality gap for the best past policy.\footnote{For example, in the assortment optimization setting from the beginning of \S\ref{sec:intro}, a nominal curve shows the worst-case expected revenue of a new assortment if the  expected revenue of the store's best past assortment happened to be within X\% of optimal, for every possible value of X.}
By comparing the policies through their nominal curves, the decision maker can obtain an understanding of how the  policies in the menu can be trusted to perform under different scenarios of the optimal value of the nominal problem,  allowing them to apply their own confidence in their belief and their judgment of what scenarios are plausible to select a policy from the menu to use in practice.\looseness=-1 %

Nominal curves may be viewed as attractive from a practical standpoint for several reasons. 
First, they address the fact that the decision maker's belief about the nominal problem may be incorrect by containing worst-case performance guarantees for a policy that hold even if the optimal value of the nominal problem is arbitrarily large. Second, it is shown under mild assumptions that  nominal curves are convex, continuous, and nonincreasing (Proposition~\ref{prop:shape} in \S\ref{sec:managerialexpertise:computation}), which makes them relatively simple to visualize, as we show in \S\S\ref{sec:assortment:results} and \ref{sec:motivatingexample_shortest_path}. Third, a nominal curve does not require the decision maker to transform their belief into a fixed upper bound on the optimal value of the nominal problem, which may be inaccurate or difficult to elicit.\looseness=-1%

\paragraph{Theory.}
For the nominal curve of a policy to be useful to a decision maker, it must show that the worst-case performance of a policy improves when the optimal value of the nominal problem happens to be small. This raises many practical questions: for example, in what settings do nominal curves contain worst-case performance guarantees that are less conservative than those that would have been obtained in the absence of a belief about the optimal value of the nominal problem? %
By how much can the worst-case performance guarantees of a policy  be improved if the decision maker's belief about the nominal problem turns out to be correct?\looseness=-1%

To answer these questions, we introduce a quantity  that we call the \emph{value of human expertise} (\S\ref{sec:vhe}).  The value of human expertise is the maximum difference between the optimal values of two problems. The first is a robust optimization problem, that is, the problem of selecting a policy that performs best under the worst-case parameters from the uncertainty set. The second is a robust optimization problem where the uncertainty set has been augmented with constraints that remove all parameters that, if true, would have led to a nominal problem with an optimal value that exceeds a threshold. %
As such, the value of human expertise captures the maximum improvement in the optimal value of a robust optimization problem that can be obtained from a belief that the optimal value of the nominal problem is unlikely to be large. Stated alternatively, the value of human expertise is large if and only if there exist nominal curves that provide worst-case performance guarantees that are much less conservative than those that can be obtained by solving a robust optimization problem.\looseness=-1

Our main result  (Corollary~\ref{cor:main} in \S\ref{sec:maintheorem}) shows that the value of human expertise has a simple characterization. 
Specifically, we prove that  if the problem of computing the worst-case performance of a policy over the uncertainty set is a convex problem, then the value of human expertise is equal to the gap  between the optimal values of the min-max and max-min formulations of a robust optimization problem. This result, which holds for infinite-dimensional policy and parameter spaces, shows that a decision maker's belief about the nominal problem is valuable precisely in the applications where minimax duality does not hold. %
The proof of our main result  follows from a simple---and, as best we can tell, novel---theorem about max-min problems (Theorem~\ref{thm:fundamental} in \S\ref{sec:maintheorem}). The theorem shows that if a max-min problem has an inner problem that is convex, then a pure strategy of the outer problem  that performs best against all of the optimal solutions of the min-max problem yields an objective  value that is equal to the optimal value of the min-max problem. 

\paragraph{Applications.}
We illustrate our approach in two applications.  First, we consider the hyperlocal assortment optimization setting that was described at the beginning of \S\ref{sec:intro}, in which a parent company seeks to optimize assortments at local brick-and-mortar stores (\S\ref{sec:assortment:example1}). We consider a nonparametric {setting} in which the parent company's goal is to identify  an assortment that performs well across all of the random utility maximization models that are consistent with the transactional sales data generated by the store's past assortments \citep{farias2013nonparametric,sturt2025value}.  In a stylized numerical example, we show that nominal curves make it possible to identify new assortments to recommend to the store that in the worst case do not perform much worse than the store’s best past assortment, but are guaranteed to  strictly outperform the store’s best past assortment if the expected revenue of the store’s best past assortment happens to be close to optimal (Figure~\ref{fig:assortment_small} in \S\ref{sec:assortment:results}).\looseness=-1%
  
Second,  we consider combinatorial optimization problems (e.g., shortest path, bipartite matching, traveling salesman problems) with uncertain cost coefficients and budget uncertainty sets (\S\ref{sec:applications:optimizercurse}). Our analysis of this setting is motivated by situations where decision makers themselves  have domain knowledge. For example, experienced emergency dispatchers may have knowledge about traffic delays that commonly occur when there are flash floods; during a severe storm, the dispatcher may not precisely know the location of flooding, but may believe it is likely that all shortest routes will experience delays. For this setting, we establish several theoretical conditions under which relatively  loose beliefs about the optimal value of the nominal problem can lead to improved performance guarantees. We highlight these theoretical results through numerical experiments on a class of randomly generated shortest path problems from \cite{MelvynSim2003}.\looseness=-1

To facilitate the deployment of our proposed approach in applications, we develop two methods that can be applied in assortment and combinatorial optimization problems for computing a menu of policies that are \emph{pointwise optimal}, meaning that their nominal curves give the best possible performance guarantees in at least one scenario (\S\S\ref{sec:managerialexpertise:generating_policies} and \ref{sec:applications:optimizercurse:algorithms}).  The first method is a compact mixed-integer programming formulation that applies to zero-one network flow problems (such as shortest path and bipartite matching) and cardinality-constrained assortment optimization under the multinomial logit model with polyhedral uncertainty sets (\S\ref{sec:algorithm:reform}).  The second method is a simple cutting plane method that can be applied in more general settings (\S\ref{sec:algorithm:cutting}). We propose other approaches for generating menus of policies in \S\ref{appx:disappointment}.

\subsection{Related Literature} \label{sec:lit}
This paper draws on and contributes to several fields including robust optimization, inverse optimization, assortment optimization, and minimax theory.  

 \paragraph{Robust optimization.}

There is a vast literature on designing uncertainty sets in robust optimization based on probabilistic guarantees, risk aversion, historical data, and machine learning models; see \cite{kuhn2025distributionally,lou2024estimation}, and references therein. Most of our main results hold for general uncertainty or ambiguity sets that are nonempty, compact, and convex (see \S\S\ref{sec:managerialexpertise:computation} and \ref{sec:mainresults}). As such, our main results can be applied concurrently with many existing methods for designing uncertainty and ambiguity sets. Compared to papers such as \cite{iancu2014pareto} that reduce the conservatism of robust optimization for a given uncertainty set, we propose a new approach to evaluating  policies by studying the relationship between the parameters in an uncertainty set and the possible optimal values of the nominal problem.\looseness=-1 %

Our paper relates to literature on reducing the conservatism of uncertainty sets by incorporating information about an auxiliary optimization problem. \cite{long2023robust} and \cite{sim2025analytics} introduce robust satisficing and propose computing the target in that model by solving an empirical optimization problem to contend with the optimizer's curse.  \cite{wang2023learning} tune the parameters of an uncertainty set to perform well across a contextual family of different problems. Closer to the present paper is the work of \cite{bennouna2026data,bennouna2026data2}, who add constraints into uncertainty sets based on linear projections of the linear objective function with uncertain coefficients and analyze when these constraints are sufficient for identifying the optimal solution of the nominal problem. In our work, the auxiliary problem is the nominal problem, i.e., the problem we would have solved if the true parameters were known, and we incorporate  bounds on the optimal value of the nominal problem by adding constraints into an uncertainty set. 

 \paragraph{Private information and inverse optimization.} An example of an application where humans have access to private information that is not available to the decision maker is hyperlocal revenue management, whereby a parent company or distributor seeks to optimize pricing or assortment decisions at local stores. In these settings, there is literature documenting that managers have access to private information about local consumer preferences that influences their decisions \citep{kok2008assortment,farias2017building}, which relates to a much broader literature on the value of private information in human-AI collaboration \citep{kesavan2020field,balakrishnan2026human}. Our paper studies how such private information can be incorporated indirectly through beliefs about the quality of past policies that were selected by humans. %

A related stream of research is the field of inverse optimization, which focuses on using partial information about the optimal solutions of a nominal problem to infer the parameters of its objective function~\citep{ahuja2001inverse,chan2025inverse}. Typical motivating examples for inverse optimization are situations where one observes decisions made by experts (such as routing choices of drivers or medical decisions made by doctors) and wishes to ``reverse engineer" the objective or utility function that the experts were trying to optimize.
Many variants have been proposed, including those where the optimal solutions are data-driven and subject to noise~\citep{aswani2018inverse,mohajerin2018data_inverse} and where information about the optimal value is available~\citep{ahmed2005inverse}.   

Our paper studies a problem setting that lies somewhere between traditional robust optimization and inverse optimization. Similar to robust optimization, we start with an uncertainty set of parameters of the objective function, and like inverse optimization, we additionally have partial information about the nominal problem with the true parameters. However, we are interested in cases where that partial information is too crude for identifying the true parameters of the nominal problem and insufficient  for recovering the optimal policy. One of the takeaways of the numerical examples of this paper (\S\S\ref{sec:assortment:results} and \ref{sec:motivatingexample_shortest_path})  is that intersecting a ``conservative" uncertainty set with ``crude" information about the optimal value of the nominal problem can lead to robust optimization problems that are practically useful.

\paragraph{Assortment optimization.} 
A central problem in the field of assortment optimization is identifying the discrete choice model from aggregated transactional data generated by a store's past assortments.  The challenge is that there are often many stochastically rational (i.e., random utility maximization) choice models that can fit such data perfectly. To contend with this, a stream of literature \citep{rusmevichientong2012robust,farias2013nonparametric,bertsimas2017robust,jin2022distributionally,desir2024robust,wang2024randomized,sturt2025value,ruan2026nonparametric} has focused on constructing  uncertainty sets of discrete choice models and finding an assortment that does best in the worst case by solving a robust assortment optimization problem. This work fits within a broader literature in revenue management and economics on prior-free and non-Bayesian approaches to pricing and mechanism design; see \cite{bergemann2011robust,koccyiugit2020distributionally,anunrojwong2024best,bahamou2024fast}, and references therein.

Our paper contributes to the above revenue management literature in three ways. First, we propose a new robustness criterion %
that is designed for settings where a decision maker believes that the optimal value of the nominal problem is unlikely to be large. %
Second, we provide numerical evidence that uncertainty sets that are constructed using nonparametric modeling techniques from \cite{farias2013nonparametric} can contain random utility maximization models that, if true, would imply that  past assortments were highly suboptimal (see \S\ref{sec:assortment:example1}). Third, we show that eliminating those parameters can induce a phase transition of a robust assortment optimization problem from being overly conservative  to practically useful.  %

\paragraph{Minimax theory.} 

There is a rich history in operations research and game theory of establishing properties of max-min problems that ensure that the minimum and maximum can be interchanged. %
Classic conditions for minimax theorems include \cite{fan1953minimax} and \cite{sion1958general}, and conditions that arise in the context of  robust optimization applications can be found in \cite{nilim2005robust,iyengar2005robust,wei2024adjustability,desir2024robust,zhen2025unified,shafiee2026nash}. Our paper differs from the above literature because we  prove our main results under assumptions that are insufficient for minimax theorems (Assumptions~\ref{ass:main} and \ref{ass:convex} in \S\ref{sec:managerialexpertise:computation}). The study of mixed strategies in the context of stochastic programming and robust optimization includes \cite{delage2019dice,wang2024randomized,guan2026randomized}. Our work focuses on  high-stakes applications where a decision maker is risk-averse and randomized policies are impossible to implement or undesirable to the decision maker; examples of such applications can be found throughout this paper.\looseness=-1%

As best we can tell, the closest related work  to our Theorem~\ref{thm:fundamental}  is \citet[p. 406]{terkelsen1972some}, which presents proof techniques based on finite intersections that are similar to ours, specifically our proof steps that we relegate  to Appendix~\ref{app:topology_application}. While our proof techniques for Theorem~\ref{thm:fundamental} are elementary, we are not aware of a prior statement of our theorem in the literature, and our theorem does not appear to follow immediately from classical results from papers such as \cite{fan1953minimax} and \cite{sion1958general}.  The value of our Theorem~\ref{thm:fundamental} lies in its generality:  it does not require convexity of the outer problem, does not assume that sets of optimal strategies for the inner or outer problems are singletons, and extends to inner and outer problems that are infinite-dimensional.\looseness=-1  %


\section{Optimization with Human Expertise}\label{sec:managerialexpertise}
\subsection{Problem Setting} \label{sec:managerialexpertise:setting}
We consider  optimization problems  of the form 
\begin{align}
    \max_{\bx \in \mathcal{X}} f(\bx,\bar{\param}) \label{prob:true}
\end{align}
where the policies are chosen from   a nonempty compact set $\mathcal{X}$ and where the true parameter of the objective function $\bar{\param}$ is unknown.
To ensure that  optimums are attained, we assume throughout that  $f(\cdot,\cdot)$ is bounded, upper semicontinuous in its first argument, and lower semicontinuous in its second argument.\footnote{Note that these assumptions allow for the  policies $\bx$ and the parameter  $\bar{\btheta}$ to be infinite-dimensional.} The parameter $\bar{\param}$ can correspond, for example,  to a probability measure, a random utility   model, or edge costs in a network.\looseness=-1

We focus on the problem setting in which all of the information from available datasets about the true but unknown parameter $\bar{\btheta}$ has been summarized by a nonempty compact set of parameters $\mathcal{U}$, referred to as an uncertainty set. We assume that the uncertainty set was constructed with the goal of being small while  containing the true parameter $\bar{\btheta} \in \mathcal{U}$. The study of data-driven techniques for constructing uncertainty sets with rigorous guarantees has rapidly evolved into what is now a relatively mature discipline (see \S\ref{sec:lit}). In assortment optimization, for example, uncertainty sets  can be constructed as a set of random utility maximization models   that are consistent with transactional sales data generated by a store's past assortments.\looseness=-1

In this paper, we assume that in addition to having an uncertainty set, the decision maker believes that it is unlikely that the optimal value of the nominal problem~\eqref{prob:true} is large.  The belief is derived from information that is not captured in datasets, obtained from domain knowledge and interacting with the physical world. The decision maker seeks a policy that can be trusted to perform well across the parameters from the uncertainty set, but performs even better if the optimal value of the nominal problem~\eqref{prob:true} happens to be small.\looseness=-1

\subsection{A New Approach to Evaluating Policies} \label{sec:managerialexpertise:nominalcurves}
We propose the following approach to evaluating policies in the above problem setting.   For each $\eta \in \R$,  define the \emph{reduced uncertainty set} corresponding to $\eta$ as \looseness=-1
\begin{align*}
 \mathcal{U}_\eta \coloneqq     \left \{ \param \in \mathcal{U}: \max_{\bx' \in \mathcal{X}} f(\bx',\param) \le \eta \right \} = \left \{ \param \in \mathcal{U}: f(\bx',\param) \le \eta \;\; \forall \bx' \in \mathcal{X} \right \}. 
\end{align*}
In words, the reduced uncertainty set is equal to the original uncertainty set with constraints that exclude all parameters that, if true,  would lead the nominal problem to have an optimal value that exceeds $\eta$.
Instead of evaluating a policy $\bx$ by its worst-case performance, 
 we propose evaluating the policy by its \emph{nominal curve}\looseness=-1%
\begin{align}
\left \{\left( \eta, \min_{\btheta \in \mathcal{U}_\eta} f(\bx,\btheta) \right):     \eta \ge \ubar{\eta} \right \}  \label{line:wc_eta_set}
\end{align} 
where $\ubar{\eta} \coloneqq \min\{\eta: \mathcal{U}_\eta \neq \emptyset \}$ is the smallest scalar for which the reduced uncertainty set is nonempty.\footnote{An explicit formula for $\ubar{\eta}$  is  found in Proposition~\ref{prop:eta} in \S\ref{sec:simplebounds}.} %
The nominal curve provides the usual worst-case performance guarantee for the policy for all sufficiently large $\eta$ and gives performance guarantees that may be less pessimistic if $\bar{\btheta} \in \mathcal{U}$ and the optimal value of the nominal problem~\eqref{prob:true} happens to be less than or equal to $\eta$, for each $\eta \ge \ubar{\eta}$. As we show below, the nominal curve thus offers the decision maker a rigorous way to transform a belief that it is unlikely that the optimal value of the nominal problem~\eqref{prob:true} is large into performance guarantees for a policy that are practically useful.

As a motivating example, it is common in assortment optimization for a parent company to have access to historical sales data generated by a store's past assortments, and for the store manager to be risk-averse and reluctant to experiment with new assortments that might lead to a decline in expected revenue.    Given the historical sales data, the parent company can construct an uncertainty set of random utility maximization models that are consistent with the historical sales data, and then solve a robust optimization problem to identify an assortment that can be trusted to outperform the store's best past assortment across all random utility models in the uncertainty set \citep{farias2013nonparametric,sturt2025value}.  In \S\ref{sec:assortment:results}, we show that such data-driven uncertainty sets can contain random utility maximization models  that, if true, would imply that the store's past assortments  were highly suboptimal.  This would be surprising in many practical settings in which store managers did not choose the past assortments \emph{randomly}, but rather were informed by  private information  about the types of customers that typically visit the store. The nominal curve~\eqref{line:wc_eta_set} makes it possible to, for example, identify new assortments to recommend to the store that in the worst case do not perform much worse than the store’s best past assortment, but are guaranteed to  strictly outperform the store’s best past assortment if the expected revenue of the store’s best past assortment happens to be close to optimal.

The nominal curve~\eqref{line:wc_eta_set} is not a scalar. This is motivated by the fact that the decision maker's belief that it is unlikely that the optimal value of \eqref{prob:true} is large may be subjective, and so it may be difficult for a decision maker to transform their belief into, for example, an accurate upper bound on the optimal value of \eqref{prob:true}. A nominal curve thus allows a decision maker to obtain performance guarantees for a policy under a variety of possible scenarios about the optimal value of the  nominal problem~\eqref{prob:true} that the decision maker believes may be plausible, and to compare policies under those possible scenarios by plotting their nominal curves; see \S\ref{sec:assortment:results} and \S\ref{sec:motivatingexample_shortest_path}. By capturing the worst-case performance of a policy over the entire uncertainty set in the case of sufficiently large $\eta$\footnote{It follows from the boundedness of $f(\cdot,\cdot)$ that $\mathcal{U}_\eta = \mathcal{U}$ for all sufficiently large $\eta$.}, the nominal curve also shows the decision maker how a policy can be trusted to perform even if their belief is incorrect.\looseness=-1

\subsection{Generating Policies} \label{sec:managerialexpertise:generating_policies}
Because \eqref{line:wc_eta_set} is not a scalar,  there may not exist a policy that is \emph{universally optimal} with respect to \eqref{line:wc_eta_set}. Specifically, we observe that the nominal curves~\eqref{line:wc_eta_set} corresponding to feasible policies are pointwise upper bounded by\looseness=-1
\begin{align}
\left \{\left( \eta, \max_{\bx \in \mathcal{X}} \min_{\btheta \in \mathcal{U}_\eta} f(\bx,\btheta) \right): \eta \ge \ubar{\eta}\right \}. \label{prob:robust_eta_set}
\end{align}
If there exists a policy whose nominal curve~\eqref{line:wc_eta_set} coincides with the upper bound \eqref{prob:robust_eta_set} for every $\eta$, then that policy would be considered universally optimal. %
 Because such a policy may not exist, one can generate a menu of policies  to offer to the decision maker. 

A simple approach to generating a menu of policies is to solve the \emph{reduced robust optimization} problem
\begin{align}
\max_{\bx \in \mathcal{X}} \min_{\btheta \in \mathcal{U}_\eta} f(\bx,\btheta) \label{prob:robust_eta}
\end{align}
 across a discrete range of values of $\eta \in [\ubar{\eta},\infty)$. The optimal policies for those reduced robust optimization problems have nominal curves that intersect  the upper bound \eqref{prob:robust_eta_set}; we refer to these as \emph{pointwise optimal} nominal curves.  
  Given the menu of policies obtained by solving \eqref{prob:robust_eta} for different values of $\eta$, the decision maker can compare the nominal curves~\eqref{line:wc_eta_set} of those policies and apply their own judgment about what scenarios are plausible to select a single policy. We apply this approach to generating menus of policies in the numerical examples in  \S\S\ref{sec:assortment:results} and \ref{sec:motivatingexample_shortest_path}, and alternative approaches to generating a menu of policies can be found in \S\ref{appx:disappointment}.\looseness=-1

\subsection{Computation} \label{sec:managerialexpertise:computation}
There are two relevant computational tasks to consider: computing a nominal curve~\eqref{line:wc_eta_set} for a fixed policy, and finding a menu of policies through solving \eqref{prob:robust_eta} for a range of values of $\eta$.  Neither task is trivial in general in light of the minimal assumptions stated at the beginning of \S\ref{sec:managerialexpertise:setting}, which are repeated below:
\begin{assumption}\label{ass:main}
$\mathcal{X},\mathcal{U}$ are compact, nonempty sets, and $f(\cdot,\cdot)$ is a bounded function that is upper semicontinuous in its first argument and lower semicontinuous in its second argument. 
\end{assumption} 
Those two computational tasks simplify, however, if the following assumption is also satisfied:\looseness=-1
\begin{assumption}\label{ass:convex}
$\mathcal{U}$ is a convex set and $\param \mapsto f(\bx,\param)$ is a convex function for all $\bx \in \mathcal{X}$.
\end{assumption} 
The above assumption, which is common in the literature and often satisfied in real-world applications, does not guarantee that nominal curves~\eqref{line:wc_eta_set} are easy to compute, nor does it imply that the reduced robust optimization problem~\eqref{prob:robust_eta} is easy to solve. For example, Assumptions~\ref{ass:main} and \ref{ass:convex} do not preclude $\mathcal{U}$ from being an infinite-dimensional set of parameters. That said, Assumptions~\ref{ass:main} and \ref{ass:convex}  ensure   for each $\bx \in \mathcal{X}$ and $\eta \ge \ubar{\eta}$ that
$$\min_{\btheta \in \mathcal{U}_\eta} f(\bx,\btheta)$$
is a convex optimization problem over a compact convex set $\mathcal{U}_\eta$. As such, the nominal curve~\eqref{line:wc_eta_set} for every fixed $\bx \in \mathcal{X}$ can be calculated to any accuracy by solving multiple convex optimization problems, one for each $\eta$ in a sufficiently fine grid. The above assumptions also ensure that nominal curves~\eqref{line:wc_eta_set} have a simple structure:
\begin{restatable}{proposition}{propone}
\label{prop:shape}
If Assumptions~\ref{ass:main} and \ref{ass:convex} hold and $\bx \in \mathcal{X}$, then the function $v_\bx(\eta) \coloneqq \min_{\btheta \in \mathcal{U}_\eta} f(\bx,\btheta)$ is nonincreasing, convex, and continuous for $\eta \in [\ubar{\eta},\infty)$. 
\end{restatable}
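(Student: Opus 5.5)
Define $g(\btheta) \coloneqq \max_{\bx' \in \mathcal{X}} f(\bx',\btheta)$, so that $\mathcal{U}_\eta = \{\btheta \in \mathcal{U}: g(\btheta) \le \eta\}$. Under Assumption~\ref{ass:convex}, $g$ is a pointwise maximum of convex functions and hence convex on $\mathcal{U}$. Under Assumption~\ref{ass:main}, $g$ is also lower semicontinuous as a supremum of lower semicontinuous functions. Consequently each $\mathcal{U}_\eta$ is a closed subset of the compact set $\mathcal{U}$, and it is convex. Because $\btheta\mapsto f(\bx,\btheta)$ is lower semicontinuous, the minimum defining $v_\bx(\eta)$ is attained whenever $\mathcal{U}_\eta\neq\emptyset$, which holds for every $\eta \ge \ubar{\eta}$. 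Here $\ubar{\eta}=\min_{\btheta\in\mathcal{U}} g(\btheta)$ is attained by lower semicontinuity.

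\emph{Monotonicity.} The sets are nested: $\mathcal{U}_\eta \subseteq \mathcal{U}_{\eta'}$ for $\eta\le\eta'$. Minimizing over a larger set can only lower the value, so $v_\bx$ is nonincreasing.

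\emph{Convexity.} Fix $\eta_1,\eta_2 \ge \ubar{\eta}$ and $\lambda\in[0,1]$, and let $\btheta_i$ be minimizers for $v_\bx(\eta_i)$. Set $\btheta_\lambda = \lambda\btheta_1+(1-\lambda)\btheta_2$. It lies in $\mathcal{U}$ by convexity of $\mathcal{U}$, and $g(\btheta_\lambda)\le \lambda\eta_1+(1-\lambda)\eta_2$ by convexity of $g$. Hence $\btheta_\lambda \in \mathcal{U}_{\lambda\eta_1+(1-\lambda)\eta_2}$. Convexity of $f(\bx,\cdot)$ then gives
\[
v_\bx(\lambda\eta_1+(1-\lambda)\eta_2)\le f(\bx,\btheta_\lambda)\le \lambda v_\bx(\eta_1)+(1-\lambda)v_\bx(\eta_2).
\]

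\emph{Continuity.} A finite convex function on the interval $[\ubar{\eta},\infty)$ is automatically continuous on the open interior $(\ubar{\eta},\infty)$; finiteness follows from boundedness of $f$. It therefore remains to show continuity at the endpoint $\ubar{\eta}$, and I expect this to be the main obstacle.

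At $\ubar{\eta}$, a convex function can jump only upward at the boundary, so we need $v_\bx(\ubar{\eta}) \le \lim_{\eta\downarrow\ubar{\eta}} v_\bx(\eta)$. Since $v_\bx$ is nonincreasing, the reverse inequality is immediate. For the needed inequality, I would use compactness together with lower semicontinuity:
\begin{enumerate}
\item Take a sequence $\eta_k\downarrow\ubar{\eta}$ and minimizers $\btheta_k\in\mathcal{U}_{\eta_k}$ with $f(\bx,\btheta_k)=v_\bx(\eta_k)$.
\item By compactness of $\mathcal{U}$, extract a convergent subsequence (or a subnet, if the parameter space is infinite-dimensional and not metrizable) with limit $\btheta^*$.
\item By lower semicontinuity of $g$, we have $g(\btheta^*)\le \liminf_k \eta_k=\ubar{\eta}$, so $\btheta^* \in \mathcal{U}_{\ubar{\eta}}$.
\item By lower semicontinuity of $f(\bx,\cdot)$, we get $v_\bx(\ubar{\eta}) \le f(\bx,\btheta^*)\le \liminf_k v_\bx(\eta_k)$.
\end{enumerate}
This closes the argument. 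Alternatively, one can phrase the endpoint step as $\mathcal{U}_{\ubar{\eta}}=\bigcap_{\eta>\ubar{\eta}}\mathcal{U}_\eta$ together with a finite-intersection argument on the compact closed sets $\{\btheta\in\mathcal{U}_\eta: f(\bx,\btheta)\le c\}$. That formulation avoids sequences entirely in a general topological setting.
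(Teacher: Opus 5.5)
Your proposal is correct and follows essentially the same route as the paper's proof in Appendix~\ref{appx:misc_eta_nominal}. Monotonicity comes from nesting of the sets $\mathcal{U}_\eta$, convexity from the convex combination of minimizers together with convexity of $\max_{\by\in\mathcal{X}} f(\by,\cdot)$, and continuity from convexity on $(\ubar{\eta},\infty)$ plus a compactness and lower-semicontinuity subnet argument at $\ubar{\eta}$; the one small imprecision is in step 4, where the bound should be the limit along the subnet, which equals $\lim_k v_\bx(\eta_k)$ because $v_\bx(\eta_k)$ is monotone and bounded (the paper makes this step explicit).
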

The above proposition implies that nominal curves~\eqref{line:wc_eta_set} for fixed policies as well as the upper bound curve~\eqref{prob:robust_eta_set} have a structure that is relatively easy to visualize, as we show in \S\S\ref{sec:assortment:results} and \ref{sec:motivatingexample_shortest_path}.  Proposition~\ref{prop:shape} also implies that if a policy $\bx$ satisfies $\min_{\btheta \in \mathcal{U}_\eta} f(\bx,\btheta) > \min_{\btheta \in \mathcal{U}} f(\bx,\btheta)$ for some $\eta$, and if $\bar{\btheta} \in \mathcal{U}$, then the smaller the optimal value of the nominal problem~\eqref{prob:true} happens to be, the better the policy is guaranteed to perform in the worst case.  %
The proof of  Proposition~\ref{prop:shape}, which is found in Appendix~\ref{appx:misc_eta_nominal}, makes use of the closed-form expression of $\ubar{\eta} \coloneqq \min\{\eta: \mathcal{U}_\eta \neq \emptyset \}$ that is established in Proposition~\ref{prop:eta} in  \S\ref{sec:simplebounds}. 

\subsection{The Value of Human Expertise} \label{sec:vhe}

To evaluate the capacity of nominal curves to provide performance guarantees that are practically useful, 
we introduce a quantity that we call the \emph{value of human expertise}. This quantity, defined below, is the maximum improvement in the optimal value of a robust optimization problem that can be achieved by eliminating parameters from the uncertainty set that, if true, would have led the nominal problem to have a large optimal value. Equivalently, it is the difference between the maximum and minimum of the upper bound curve~\eqref{prob:robust_eta_set}:\looseness=-1
\begin{align}
\Delta &\coloneqq \max \limits_{ \eta \ge \ubar{\eta}} \left \{\max \limits_{\bx \in \mathcal{X}} \min \limits_{\btheta \in \mathcal{U}_\eta} f(\bx,\btheta) \right \}  - \min \limits_{\eta \ge \ubar{\eta}} \left \{  \max \limits_{\bx \in \mathcal{X}} \min \limits_{\btheta \in \mathcal{U}_\eta} f(\bx,\btheta) \right \} \notag \\
&= \max \limits_{\bx \in \mathcal{X}} \min \limits_{\btheta \in \mathcal{U}_{\ubar{\eta}}} f(\bx,\btheta) - \max \limits_{\bx \in \mathcal{X}} \min \limits_{\btheta \in \mathcal{U}} f(\bx,\btheta) \notag %
\end{align}
The second equality holds because $\mathcal{U}_{\ubar{\eta}}$ is nonempty, $\mathcal{U}_{\ubar{\eta}} \subseteq \mathcal{U}_{\eta}$ for all $\eta \ge \ubar{\eta}$, and $\mathcal{U}_{\eta} = \mathcal{U}$ for all sufficiently large $\eta$ by the boundedness of $f(\cdot,\cdot)$. 
Note that the attainment of the maxima and minima in each of the above optimization problems  follows from the assumptions at the beginning of \S\ref{sec:managerialexpertise:setting}, which were restated as Assumption~\ref{ass:main} in \S\ref{sec:managerialexpertise:computation}.\looseness=-1

The value of human expertise can be interpreted as the maximum improvement in performance guarantees that can be obtained from a belief that the optimal value of \eqref{prob:true} is unlikely to be large. Indeed, if this quantity is equal to zero, then it is not possible for nominal curves to provide performance guarantees that are less conservative than those that would be obtained by solving a robust optimization problem. If the value of human expertise is large, then it guarantees the existence of policies with non-trivial nominal curves, that is, the existence of policies $\bx \in \mathcal{X}$ for which $\min_{\btheta \in \mathcal{U}_\eta} f(\bx,\btheta)$ is much larger than $\min_{\btheta \in \mathcal{U}} f(\bx,\btheta)$ as well as the optimal value of a robust optimization problem for some values of $\eta$. {It follows from \S\ref{sec:managerialexpertise:generating_policies} that such policies can be obtained by solving the reduced robust optimization problem~\eqref{prob:robust_eta} for small values of $\eta$.} In sum, a large $\Delta$ is a necessary and sufficient condition for there to exist nominal curves that offer much  stronger performance guarantees than can be obtained by robust optimization. %

There are many questions related to the value of human expertise that are relevant from theoretical and practical perspectives.
In what applications can the value of human expertise be strictly positive?  How does the value of human expertise relate to the structure of a traditional robust optimization problem $\max_{\bx \in \mathcal{X}} \min_{\param \in \mathcal{U}} f(\bx,\param)$?
More generally, for there to be a non-zero gap between the optimal values of the reduced robust optimization problem~\eqref{prob:robust_eta} and a traditional robust optimization problem, must  $\eta$ be a tight bound on the optimal value of the nominal problem~\eqref{prob:true}? %
 In addition to shedding light on those questions, our main goal of this paper is to provide answers to the following two questions:\looseness=-1
\begin{question} \label{q1}
What is the maximum possible value of $\Delta$? 
\end{question}
\begin{question} \label{q2}
In what settings does $\Delta$ attain that maximum possible value?
\end{question}



\section{The Value of Human Expertise and the Minimax Gap} \label{sec:mainresults}
In this section, we show that there are simple answers to Questions~\ref{q1} and \ref{q2}. %
 In \S\ref{sec:simplebounds}, we answer Question~\ref{q1} by developing an upper bound on the value of human expertise that holds under Assumption~\ref{ass:main}.  In \S\ref{sec:maintheorem}, we present the main result of this paper, Corollary~\ref{cor:main}, which answers Question~\ref{q2} by proving that the value of human expertise is always equal to the upper bound from \S\ref{sec:simplebounds} if Assumption~\ref{ass:convex} is also satisfied.

\subsection{Simple Bounds} \label{sec:simplebounds}
We begin by establishing an upper bound on $\Delta$ and other simple preliminary  results that hold under Assumption~\ref{ass:main}. These results will be obtained by relating the following three problems:\looseness=-1
\begin{align}
&\max_{\bx \in \mathcal{X}} \min_{\btheta \in \mathcal{U}_\eta} f(\bx,\btheta) \tag{\ref{prob:robust_eta}}\\
    &\max_{\bx \in \mathcal{X}} \min_{\param \in \mathcal{U}} f(\bx,\param) \label{prob:robust}\\
    &\min_{\param \in \mathcal{U}} \max_{\bx \in \mathcal{X}} f(\bx,\param) \label{prob:robust_exchange}
\end{align}
The first problem is a restatement of the reduced robust optimization problem~\eqref{prob:robust_eta} from \S\ref{sec:managerialexpertise}. The second problem~\eqref{prob:robust} is a traditional robust optimization problem. The third problem~\eqref{prob:robust_exchange} is obtained from  \eqref{prob:robust} by interchanging  the maximum and minimum.

 It follows from Assumption~\ref{ass:main} that the optimums in the outer and inner problems of the above three optimization problems are always attained. Moreover, it follows from the standard minimax inequality that the optimal value of the min-max problem~\eqref{prob:robust_exchange} is an upper bound on the optimal value of the max-min problem~\eqref{prob:robust}. Note that Assumptions~\ref{ass:main} and \ref{ass:convex} are insufficient for minimax theorems and do not ensure the existence of saddle points for \eqref{prob:robust};  for example, results like Sion's minimax theorem that ensure the equality of  the optimal values of \eqref{prob:robust} and \eqref{prob:robust_exchange} hold only under additional assumptions such as  convexity of $\mathcal{X}$ and quasi-concavity of $f(\cdot,\param)$.\looseness=-1

 In what follows, we develop an upper bound on $\Delta$ by showing that the optimal value of the min-max problem~\eqref{prob:robust_exchange} is also an upper bound on the optimal value of the reduced robust optimization problem~\eqref{prob:robust_eta}. To establish that upper bound, we first characterize the viable choices for the scalar $\eta$ in the reduced robust optimization problem~\eqref{prob:robust_eta}, that is, the values of the scalar for which the reduced uncertainty set is nonempty. In the following proposition, we formalize the viable choices for $\eta$ by relating the requirement that $\mathcal{U}_\eta \neq \emptyset$ to the optimal value of the min-max problem~\eqref{prob:robust_exchange}.

\begin{proposition} \label{prop:eta}
If Assumption~\ref{ass:main} holds, then $\mathcal{U}_\eta \neq \emptyset$ if and only if   $\eta \ge \min\limits_{\param \in \mathcal{U}} \max\limits_{\bx \in \mathcal{X}} f(\bx,\param)$. 
\end{proposition}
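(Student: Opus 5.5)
The plan is to reduce the claim to the attainment of two optima, both of which Assumption~\ref{ass:main} provides. Define the optimal-value function $g(\param) \coloneqq \max_{\bx \in \mathcal{X}} f(\bx,\param)$. By definition, $\mathcal{U}_\eta = \{\param \in \mathcal{U} : g(\param) \le \eta\}$, so $\mathcal{U}_\eta$ is a sublevel set of $g$ restricted to $\mathcal{U}$. The proposition then states that this sublevel set is nonempty exactly when $\eta \ge \min_{\param \in \mathcal{U}} g(\param)$. I will show two things: that $g(\param)$ is attained for each $\param$, and that $g$ attains its minimum on $\mathcal{U}$.

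First, for fixed $\param$, the map $\bx \mapsto f(\bx,\param)$ is upper semicontinuous on the nonempty compact set $\mathcal{X}$. Hence the maximum defining $g(\param)$ is attained and finite, which also justifies the equality of the two expressions for $\mathcal{U}_\eta$ in the definition.

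Second, $g$ is a pointwise supremum of the functions $\param \mapsto f(\bx,\param)$, each of which is lower semicontinuous by Assumption~\ref{ass:main}. A supremum of lower semicontinuous functions is lower semicontinuous, because its strict superlevel set $\{g > t\}$ is the union over $\bx$ of the open sets $\{f(\bx,\cdot) > t\}$. Since $\mathcal{U}$ is nonempty and compact, Weierstrass gives a minimizer $\param^\star \in \mathcal{U}$ with $g(\param^\star) = \min_{\param \in \mathcal{U}} \max_{\bx} f(\bx,\param) \eqqcolon \eta^\star$. The only subtle point is that attainment of the outer minimum needs lower semicontinuity of $g$, not just of each $f(\bx,\cdot)$, and the supremum argument above supplies it. I will phrase it via open superlevel sets so that it also applies in infinite-dimensional (general topological) parameter spaces.

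The two directions then follow.
\begin{itemize}
\item If $\eta \ge \eta^\star$, then $g(\param^\star) = \eta^\star \le \eta$, so $\param^\star \in \mathcal{U}_\eta$ and the set is nonempty.
\item If $\mathcal{U}_\eta \neq \emptyset$, pick any $\param \in \mathcal{U}_\eta$. Then $\eta \ge g(\param) \ge \min_{\param' \in \mathcal{U}} g(\param') = \eta^\star$.
\end{itemize}

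As a by-product, $\ubar{\eta} = \eta^\star$, and the minimum in the definition of $\ubar{\eta}$ is attained.
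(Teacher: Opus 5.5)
Your proof is correct and matches the paper's argument: when $\eta \ge \min_{\param \in \mathcal{U}} \max_{\bx \in \mathcal{X}} f(\bx,\param)$, any minimizer of the min-max problem lies in $\mathcal{U}_\eta$, and the converse follows directly from the definition of $\mathcal{U}_\eta$. Your explicit argument that $\param \mapsto \max_{\bx \in \mathcal{X}} f(\bx,\param)$ is lower semicontinuous, so the outer minimum is attained, is the same fact the paper takes from its topology appendix.
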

\begin{proof}
Suppose that  $\eta \ge \min_{\param \in \mathcal{U}} \max_{\bx \in \mathcal{X}} f(\bx,\param)$. Then  
\begin{align*}
    \mathcal{U}_\eta &
    \supseteq \left \{ \param \in \mathcal{U}: \max_{\bx \in \mathcal{X}} f(\bx,\param) \le \min_{\hat{\param} \in \mathcal{U}} \max_{\bx \in \mathcal{X}} f(\bx,\hat{\param}) \right \}= \argmin_{\hat{\param} \in \mathcal{U}} \max_{\bx \in \mathcal{X}} f(\bx,\hat{\param})  \neq \emptyset,
\end{align*}
where  the set inclusion follows from the definition of $\mathcal{U}_\eta$ and the supposition on $\eta$, the equality follows from algebra, and the nonemptiness follows from the fact that the optimum of \eqref{prob:robust_exchange} is attained by Assumption~\ref{ass:main}. The converse follows from similar reasoning. 
\end{proof}

 In view of the above, the following Proposition~\ref{prop:bounds} establishes that the optimal value of \eqref{prob:robust_exchange} is an upper bound on the optimal value of the reduced robust optimization problem~\eqref{prob:robust_eta}. It is followed by Corollary~\ref{cor:necessary}, which follows immediately from Proposition~\ref{prop:bounds} and the definition of $\Delta$.
\begin{proposition} \label{prop:bounds}
If Assumption~\ref{ass:main} holds and  $\eta \ge \min\limits_{\param \in \mathcal{U}} \max\limits_{\bx \in \mathcal{X}} f(\bx,\param)$, then $\max \limits_{\bx \in \mathcal{X}} \min \limits_{\param \in \mathcal{U}_\eta} f(\bx,\param)  \le \min \limits_{\param \in \mathcal{U}} \max\limits_{\bx \in \mathcal{X}}  f(\bx,\param)$. 
\end{proposition}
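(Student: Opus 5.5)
The plan is to compare the reduced robust problem~\eqref{prob:robust_eta} directly with the min-max problem~\eqref{prob:robust_exchange}, using a minimizer of the latter as a single test parameter that works against every policy. Let $\eta^\star \coloneqq \min_{\param \in \mathcal{U}} \max_{\bx \in \mathcal{X}} f(\bx,\param)$, and fix $\param^\star \in \argmin_{\param \in \mathcal{U}} \max_{\bx \in \mathcal{X}} f(\bx,\param)$. This minimizer exists by Assumption~\ref{ass:main}, as already used in the proof of Proposition~\ref{prop:eta}.

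\textbf{Step 1: $\param^\star$ lies in the reduced set.} By definition of $\param^\star$, we have $\max_{\bx' \in \mathcal{X}} f(\bx',\param^\star) = \eta^\star \le \eta$. Hence $\param^\star \in \mathcal{U}_\eta$. This is exactly the inclusion $\argmin_{\hat\param\in\mathcal{U}} \max_{\bx} f(\bx,\hat\param) \subseteq \mathcal{U}_\eta$ from the proof of Proposition~\ref{prop:eta}. In particular $\mathcal{U}_\eta$ is nonempty, so the inner minimum in~\eqref{prob:robust_eta} is over a nonempty set.

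\textbf{Step 2: bound the inner and outer problems.} For any $\bx \in \mathcal{X}$, since $\param^\star \in \mathcal{U}_\eta$,
\begin{align*}
\min_{\param \in \mathcal{U}_\eta} f(\bx,\param) \le f(\bx,\param^\star) \le \max_{\bx' \in \mathcal{X}} f(\bx',\param^\star) = \eta^\star .
\end{align*}
Taking the maximum over $\bx \in \mathcal{X}$ on the left-hand side then gives the claim.

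The only point needing care is attainment. The set $\mathcal{U}_\eta$ is closed: each constraint $f(\bx',\param) \le \eta$ defines a closed set by lower semicontinuity of $f(\bx',\cdot)$. So $\mathcal{U}_\eta$ is compact, and the inner minimum is attained. The outer maximum is attained because $\bx \mapsto \min_{\param \in \mathcal{U}_\eta} f(\bx,\param)$ is a pointwise infimum of upper semicontinuous functions, hence upper semicontinuous on the compact set $\mathcal{X}$.

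Even without attainment the argument is unaffected, since the inequality holds for suprema and infima as well. I expect no real obstacle; the key step is simply recognizing that the min-max minimizer is feasible for every viable $\eta$.
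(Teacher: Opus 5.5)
Your proof is correct and is essentially the paper's argument. The paper applies the minimax inequality on $\mathcal{U}_\eta$ and then notes that the constraint $\max_{\bx \in \mathcal{X}} f(\bx,\param) \le \eta$ can be dropped from the outer minimization; you inline both steps by observing that a min-max minimizer $\param^\star$ lies in $\mathcal{U}_\eta$ and using it as a single witness against every $\bx \in \mathcal{X}$.
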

\begin{proof}
It follows from Proposition~\ref{prop:eta} and the fact that $\eta \ge \min_{\param \in \mathcal{U}} \max_{\bx \in \mathcal{X}} f(\bx,\param)$ that $\mathcal{U}_\eta \neq \emptyset$. Therefore, %
\begin{align*}
\max_{\bx \in \mathcal{X}} \min_{\param \in \mathcal{U}_\eta} f(\bx,\param)  \le \min_{\param \in \mathcal{U}_\eta} \max_{\bx \in \mathcal{X}} f(\bx,\param) =  \left[ \begin{aligned}
& \min_{\param \in \mathcal{U}}&&  \max_{\bx \in \mathcal{X}} f(\bx,\param)\\ 
&\textnormal{s.t.}&& \max_{\bx \in \mathcal{X}} f(\bx,\param) \le \eta 
\end{aligned} \right]  = \min_{\param \in \mathcal{U}} \max_{\bx \in \mathcal{X}} f(\bx,\param), 
\end{align*}
where the inequality is the minimax inequality, the first equality follows from the definition of $\mathcal{U}_\eta$, and the second equality follows from the fact that the constraint $\max_{\bx \in \mathcal{X}} f(\bx,\param) \le \eta$ can be removed without affecting the outer minimization problem.
\end{proof}
\begin{corollary} \label{cor:necessary}
If Assumption~\ref{ass:main} holds, then
 $\Delta \le \min \limits_{\param \in \mathcal{U}} \max \limits_{\bx \in \mathcal{X}}  f(\bx,\param) - \max \limits_{\bx \in \mathcal{X}} \min \limits_{\param \in \mathcal{U}} f(\bx,\param).$
\end{corollary}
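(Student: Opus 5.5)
The argument is short and just combines Proposition~\ref{prop:eta}, Proposition~\ref{prop:bounds}, and the second expression for $\Delta$ from \S\ref{sec:vhe}.

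Recall that
\begin{align*}
\Delta = \max_{\bx \in \mathcal{X}} \min_{\btheta \in \mathcal{U}_{\ubar{\eta}}} f(\bx,\btheta) - \max_{\bx \in \mathcal{X}} \min_{\btheta \in \mathcal{U}} f(\bx,\btheta).
\end{align*}
The subtracted term already matches the corresponding term in the claimed bound. So it suffices to show
\begin{align*}
\max_{\bx \in \mathcal{X}} \min_{\btheta \in \mathcal{U}_{\ubar{\eta}}} f(\bx,\btheta) \le \min_{\btheta \in \mathcal{U}} \max_{\bx \in \mathcal{X}} f(\bx,\btheta).
\end{align*}

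First I identify $\ubar{\eta}$. By Proposition~\ref{prop:eta}, $\mathcal{U}_\eta \neq \emptyset$ exactly when $\eta \ge \min_{\btheta \in \mathcal{U}} \max_{\bx \in \mathcal{X}} f(\bx,\btheta)$. Hence the set $\{\eta : \mathcal{U}_\eta \neq \emptyset\}$ is a closed half-line, its minimum is attained, and
\begin{align*}
\ubar{\eta} = \min_{\btheta \in \mathcal{U}} \max_{\bx \in \mathcal{X}} f(\bx,\btheta).
\end{align*}
In particular, $\ubar{\eta}$ satisfies the hypothesis of Proposition~\ref{prop:bounds}. Applying that proposition with $\eta = \ubar{\eta}$ gives exactly the displayed inequality. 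Substituting it into the expression for $\Delta$ yields the corollary.

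The only point needing care is the use of the second expression for $\Delta$. That expression relies on $\mathcal{U}_{\ubar{\eta}}$ being nonempty, which holds by the identification of $\ubar{\eta}$ above, and on the attainment of all max/min operators, which follows from Assumption~\ref{ass:main}. I do not expect any real obstacle; the step most worth stating explicitly is the identification of $\ubar{\eta}$ through Proposition~\ref{prop:eta}.

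An alternative avoids the second expression entirely. Apply Proposition~\ref{prop:bounds} to every $\eta \ge \ubar{\eta}$ to bound the first maximum in the definition of $\Delta$. Then note that the minimum over $\eta$ of the upper bound curve is at least its value for large $\eta$, where $\mathcal{U}_\eta = \mathcal{U}$ by the boundedness of $f$. That value is $\max_{\bx \in \mathcal{X}} \min_{\btheta \in \mathcal{U}} f(\bx,\btheta)$, since the curve is nonincreasing in $\eta$.
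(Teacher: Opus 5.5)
Your proof is correct and matches the paper's own argument, which obtains the corollary directly from Proposition~\ref{prop:bounds} and the definition of $\Delta$. The only difference is that you spell out the identification $\ubar{\eta} = \min_{\param \in \mathcal{U}} \max_{\bx \in \mathcal{X}} f(\bx,\param)$ via Proposition~\ref{prop:eta}, which the paper leaves implicit.
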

The above proposition and corollary show that the value of human expertise is linked to the gap between the min-max problem~\eqref{prob:robust_exchange} and the max-min problem~\eqref{prob:robust}. Specifically, Proposition~\ref{prop:bounds} shows that for the optimal value of the reduced robust optimization problem~\eqref{prob:robust_eta} to be greater  than the optimal value of the robust optimization problem~\eqref{prob:robust}, it is \emph{necessary} for there to be a gap between the optimal values of the max-min problem~\eqref{prob:robust} and the min-max problem~\eqref{prob:robust_exchange}. Corollary~\ref{cor:necessary} shows that the value of human expertise  is at most the gap between the optimal values of \eqref{prob:robust_exchange} and \eqref{prob:robust}.\looseness=-1

Corollary~\ref{cor:necessary} is useful because it furnishes us with a simple test for determining whether it is \emph{not} worthwhile to consider nominal curves~\eqref{line:wc_eta_set}. Indeed, Corollary~\ref{cor:necessary} implies that the worst-case performance $\min_{\btheta \in \mathcal{U}_\eta} f(\bx,\btheta)$ for a policy $\bx$ cannot be strictly greater than the optimal value of the robust optimization problem~\eqref{prob:robust}  in settings where the assumptions for minimax theorems are satisfied, such as problems where $\mathcal{X},\mathcal{U}$ are convex sets and $f(\cdot,\cdot)$ is concave in its first argument and convex in its second argument.  Corollary~\ref{cor:necessary} also shows that the worst-case performance $\min_{\btheta \in \mathcal{U}_\eta} f(\bx,\btheta)$ for a policy $\bx$ cannot be much greater than the optimal value of \eqref{prob:robust} if the gap between the optimal values of \eqref{prob:robust} and \eqref{prob:robust_exchange} is small.  On the positive side, Corollary~\ref{cor:necessary} leaves open the possibility that the value of human expertise can be large in settings where pure strategies for the robust optimization problem~\eqref{prob:robust} are highly suboptimal, i.e., settings where the gap between the optimal values of \eqref{prob:robust} and \eqref{prob:robust_exchange} is large.

\subsection{Main Result} \label{sec:maintheorem}
Corollary~\ref{cor:necessary} provides a simple upper bound on the value of human expertise which holds whenever  Assumption~\ref{ass:main} is satisfied. In particular, Corollary~\ref{cor:necessary} shows that the  value of human expertise has the potential to be strictly positive---equivalently, the optimal value of the reduced robust optimization problem~\eqref{prob:robust_eta} can be larger than the optimal value of the robust optimization problem~\eqref{prob:robust}---when the gap between the optimal values of the max-min problem~\eqref{prob:robust} and the min-max problem~\eqref{prob:robust_exchange} is large. But is that upper bound on $\Delta$ always attainable, or is the best-case value of human expertise less in practice?\looseness=-1

In this subsection, we answer that question by showing that the upper bound from Corollary~\ref{cor:necessary}  is {tight} and always attained under Assumption~\ref{ass:convex}. 
Our main result  is a consequence of the following theorem, which states a property about optimal solutions for max-min problems. 
\begin{theorem} \label{thm:fundamental}
If Assumptions~\ref{ass:main} and \ref{ass:convex} hold and $\eta = \min \limits_{\param \in \mathcal{U}} \max \limits_{\bx \in \mathcal{X}} f(\bx,\param)$, then 
$\max \limits_{\bx \in \mathcal{X}} \min \limits_{\param \in \mathcal{U}_\eta} f(\bx,\param) = \min \limits_{\param \in \mathcal{U}} \max \limits_{\bx \in \mathcal{X}} f(\bx,\param).$
\end{theorem}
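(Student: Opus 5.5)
By Proposition~\ref{prop:bounds}, applied with $\eta = \eta^\star \coloneqq \min_{\param \in \mathcal{U}} \max_{\bx \in \mathcal{X}} f(\bx,\param)$, we already have $\max_{\bx \in \mathcal{X}} \min_{\param \in \mathcal{U}_{\eta^\star}} f(\bx,\param) \le \eta^\star$. So it suffices to prove the reverse inequality. That is, we must exhibit a single pure strategy $\bx \in \mathcal{X}$ with $f(\bx,\param) \ge \eta^\star$ for every $\param \in \mathcal{U}_{\eta^\star}$.

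The first step is to identify the relevant set. Write $g(\param) \coloneqq \max_{\bx \in \mathcal{X}} f(\bx,\param)$. As in the proof of Proposition~\ref{prop:eta}, we have $\mathcal{U}_{\eta^\star} = \argmin_{\param \in \mathcal{U}} g(\param)$, so every $\param \in \mathcal{U}_{\eta^\star}$ satisfies $\max_{\bx} f(\bx,\param) = \eta^\star$. Under Assumption~\ref{ass:convex}, $g$ is a pointwise supremum of convex functions, so $\mathcal{U}_{\eta^\star}$ is a convex set. The only property we will actually use is that it is closed under finite convex combinations, which follows directly from its definition as $\{\param \in \mathcal{U} : f(\bx',\param) \le \eta^\star \ \forall \bx'\}$ together with convexity of $\mathcal{U}$ and of each $f(\bx',\cdot)$.

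The core argument is by contradiction, using compactness of $\mathcal{X}$. Suppose no such $\bx$ exists. Then for every $\bx \in \mathcal{X}$ there is some $\param \in \mathcal{U}_{\eta^\star}$ with $f(\bx,\param) < \eta^\star$.

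1. For each $\param \in \mathcal{U}_{\eta^\star}$, define $O_\param \coloneqq \{\bx \in \mathcal{X} : f(\bx,\param) < \eta^\star\}$. This set is open in $\mathcal{X}$ because $f(\cdot,\param)$ is upper semicontinuous.
2. By the supposition, the family $\{O_\param\}_{\param \in \mathcal{U}_{\eta^\star}}$ covers $\mathcal{X}$. Compactness of $\mathcal{X}$ yields a finite subcover $O_{\param_1},\dots,O_{\param_k}$.
3. Set $\bar{\param} \coloneqq \frac{1}{k}\sum_{i=1}^k \param_i$, which lies in $\mathcal{U}_{\eta^\star}$ by the convexity noted above.
4. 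Fix any $\bx \in \mathcal{X}$. Convexity of $f(\bx,\cdot)$ gives $f(\bx,\bar{\param}) \le \frac{1}{k}\sum_i f(\bx,\param_i)$. Each term satisfies $f(\bx,\param_i) \le \eta^\star$ because $\param_i \in \mathcal{U}_{\eta^\star}$, and at least one term is strictly below $\eta^\star$ because $\bx$ lies in some $O_{\param_i}$. Hence $f(\bx,\bar{\param}) < \eta^\star$ for every $\bx \in \mathcal{X}$.
5. Since $f(\cdot,\bar{\param})$ is upper semicontinuous on the compact set $\mathcal{X}$, its maximum is attained. Therefore $g(\bar{\param}) = \max_{\bx} f(\bx,\bar{\param}) < \eta^\star$.
6. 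Because $\bar{\param} \in \mathcal{U}$, this contradicts the definition of $\eta^\star$ as the minimum of $g$ over $\mathcal{U}$.

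Hence some $\bx$ satisfies $\min_{\param \in \mathcal{U}_{\eta^\star}} f(\bx,\param) \ge \eta^\star$, and the theorem follows.

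The main obstacle is the passage from "each $\bx$ is beaten by some $\param$" to "a single $\param$ beats every $\bx$". This is exactly where the argument needs both the open-cover and finite-subcover step, which relies on upper semicontinuity in $\bx$ and compactness of $\mathcal{X}$, and strict inequality surviving the averaging, which relies on convexity in $\param$. Some care is needed to check that the sets $O_\param$ are open in the topology for which $\mathcal{X}$ is compact, particularly in the infinite-dimensional setting. Notably, no convexity of $\mathcal{X}$ and no compactness argument on the $\param$ side are needed beyond attainment of $\eta^\star$, which is guaranteed by Assumption~\ref{ass:main}.
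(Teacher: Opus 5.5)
Your proof is correct and follows essentially the same route as the paper. The paper also splits into two cases (whether some $\bx$ satisfies $f(\bx,\param)=\eta$ for all $\param \in \mathcal{U}_\eta$) and phrases compactness as a finite-intersection argument on the closed sets $\{\bx \in \mathcal{X} : f(\bx,\param) \ge \eta\}$, which is equivalent to your open cover $O_\param$; it then averages the finitely many $\param_k$ and combines convexity of $f(\bx,\cdot)$ with attainment of $\max_{\bx} f(\bx,\cdot)$ at the average to reach the same contradiction.
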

 The above theorem establishes that a policy that performs best against all of the optimal solutions of the min-max problem~\eqref{prob:robust_exchange} yields an optimal value that is equal to the optimal value of the min-max problem~\eqref{prob:robust_exchange}.
 The value of  Theorem~\ref{thm:fundamental} lies in its generality: it does not require convexity in $\mathcal{X}$ or quasi-concavity of $f(\cdot,\param)$, does not assume that $\mathcal{U}^*$ is a singleton, and does not require that the sets $\mathcal{X}$ and $\mathcal{U}$ are finite-dimensional.   Theorem~\ref{thm:fundamental} thus extends to a wide range of applications, such as settings where the parameter is a probability measure or the policies are infinite-dimensional in the context of dynamic optimization. The following proof of Theorem~\ref{thm:fundamental} is simple and based on elementary facts about topological spaces and convexity.

\begin{proof}[Proof of Theorem~\ref{thm:fundamental}.]
Let $\eta = \min_{\param \in \mathcal{U}} \max_{\bx \in \mathcal{X}} f(\bx,\param)$ and $\mathcal{U}^* \coloneqq \argmin_{\param \in \mathcal{U}} \max_{\bx \in \mathcal{X}} f(\bx,\param)$. It  follows from Assumption~\ref{ass:convex} that $\mathcal{U}^*$ is convex, and it follows from algebra that $\mathcal{U}^* = \{ \param \in \mathcal{U}: \max_{\bx \in \mathcal{X}} f(\bx,\param) \le \eta \} = \mathcal{U}_\eta$.  We have two cases to consider:
\begin{itemize}
\item  \underline{Case 1}: Suppose that there exists an $\hat{\bx} \in \mathcal{X}$ that satisfies $f(\hat{\bx},\param) = \eta$ for all $\param \in \mathcal{U}^*$. Then 
\begin{align*}
\min_{\param \in \mathcal{U}} \max_{\bx \in \mathcal{X}} f(\bx,\param)= \min_{\param \in \mathcal{U}^*} \max_{\bx \in \mathcal{X}}f(\bx,\param) =  \min_{\param \in \mathcal{U}^*} f(\hat{\bx},\param) \le \max_{\bx \in \mathcal{X}} \min_{\param \in \mathcal{U}^*}  f(\bx,\param).
\end{align*}
Combining the above inequality with the fact that $\mathcal{U}^* = \mathcal{U}_\eta$ and Proposition~\ref{prop:bounds}, we have 
\begin{align*}
    \max \limits_{\bx \in \mathcal{X}} \min \limits_{\param \in \mathcal{U}^*} f(\bx,\param) = \min \limits_{\param \in \mathcal{U}} \max \limits_{\bx \in \mathcal{X}}  f(\bx,\param).
\end{align*}
\item \underline{Case 2}: Suppose that there does not exist an $\hat{\bx} \in \mathcal{X}$ that satisfies  $f(\hat{\bx},\param) = \eta$ for all $\param \in \mathcal{U}^*$. Then it follows from Assumption~\ref{ass:main} and from a direct application of basic results about topological spaces (see Appendix~\ref{app:topology_application}) that there must exist a finite subset $\{\param_1,\ldots,\param_K \} \subseteq \mathcal{U}_\eta$  that satisfies\looseness=-1
\begin{align}
    \bigcap_{k=1}^K \left \{ \bx \in \mathcal{X}:f(\bx,\param_k) = \eta \right \} = \emptyset.   \label{line:emptyset}
\end{align}
Since $\mathcal{U}_\eta$ is convex, it must be the case that $\hat{\param} \coloneqq \frac{1}{K} \sum_{k=1}^K \param_k$ is an element of $\mathcal{U}^*$.  Moreover, since $\hat{\param} \in \mathcal{U}^*$, it must be the case that there exists $\hat{\bx} \in \mathcal{X}$ that satisfies $f(\hat{\bx},\hat{\param}) = \eta$. However, we observe that
\begin{align*}
   \eta = f(\hat{\bx}, \hat{\param}) =  f\left(\hat{\bx}, \frac{1}{K} \sum_{k=1}^K \param_k \right) \le   \frac{1}{K} \sum_{k=1}^K f(\hat{\bx}, \param_k)  < \frac{1}{K} \sum_{k=1}^K \eta = \eta,
\end{align*}
where the first inequality follows from Assumption~\ref{ass:convex}, and 
the strict inequality follows from  line~\eqref{line:emptyset} (which implies that there exists $k \in \{1,\ldots,K\}$ such that $f(\hat{\bx},\param_k) < \eta$) and from the fact that $\param_1,\ldots,\param_K \in \mathcal{U}^*$ (which implies for each $k \in \{1,\ldots,K\}$ and for all $\bx \in \mathcal{X}$ that $f(\bx,\param_k) \le \eta$). We thus have a contradiction, which implies we cannot be in Case 2. 

\end{itemize}
Because those cases are exhaustive, our proof of Theorem~\ref{thm:fundamental} is complete. \end{proof}
Combining the above theorem with Proposition~\ref{prop:eta} and Corollary~\ref{cor:necessary}, we obtain the main result of this paper:\looseness=-1
\begin{corollary}\label{cor:main}
If Assumptions~\ref{ass:main} and \ref{ass:convex} hold, then $\Delta =  \min \limits_{\param \in \mathcal{U}} \max \limits_{\bx \in \mathcal{X}}  f(\bx,\param) - \max \limits_{\bx \in \mathcal{X}} \min \limits_{\param \in \mathcal{U}} f(\bx,\param)$.
\end{corollary}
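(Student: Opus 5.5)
The plan is to sandwich $\Delta$ between the upper bound of Corollary~\ref{cor:necessary} and a matching lower bound obtained by evaluating the first term of $\Delta$ at the smallest admissible threshold. Recall that $\Delta = \max_{\bx \in \mathcal{X}} \min_{\btheta \in \mathcal{U}_{\ubar{\eta}}} f(\bx,\btheta) - \max_{\bx \in \mathcal{X}} \min_{\btheta \in \mathcal{U}} f(\bx,\btheta)$. The second term is already the max-min value of \eqref{prob:robust}. It therefore suffices to show that
\[
\max_{\bx \in \mathcal{X}} \min_{\btheta \in \mathcal{U}_{\ubar{\eta}}} f(\bx,\btheta) = \min_{\param \in \mathcal{U}} \max_{\bx \in \mathcal{X}} f(\bx,\param).
\]

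The first step is to identify $\ubar{\eta}$ explicitly. By Proposition~\ref{prop:eta}, under Assumption~\ref{ass:main}, $\mathcal{U}_\eta \neq \emptyset$ holds exactly when $\eta \ge \min_{\param \in \mathcal{U}} \max_{\bx \in \mathcal{X}} f(\bx,\param)$. The set of admissible $\eta$ is thus a closed half-line, its minimum is attained, and
\[
\ubar{\eta} = \min_{\param \in \mathcal{U}} \max_{\bx \in \mathcal{X}} f(\bx,\param).
\]

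The second step applies Theorem~\ref{thm:fundamental} with $\eta = \ubar{\eta}$, which is precisely the value required in its hypothesis. Under Assumptions~\ref{ass:main} and \ref{ass:convex}, it gives $\max_{\bx \in \mathcal{X}} \min_{\param \in \mathcal{U}_{\ubar{\eta}}} f(\bx,\param) = \min_{\param \in \mathcal{U}} \max_{\bx \in \mathcal{X}} f(\bx,\param)$. Substituting this into the second expression for $\Delta$ from \S\ref{sec:vhe} yields the claimed identity directly. Corollary~\ref{cor:necessary} provides a consistency check, confirming that no threshold $\eta \ge \ubar{\eta}$ can produce a larger value.

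There is no real obstacle here, since the substantive work is contained in Theorem~\ref{thm:fundamental}. The one point needing care is that the simplified formula for $\Delta$ is valid, that is, that the maximum over $\eta$ of the upper bound curve \eqref{prob:robust_eta_set} is attained at $\ubar{\eta}$ and its minimum at large $\eta$. This relies on two facts. First, the sets $\mathcal{U}_\eta$ are nested, so $\mathcal{U}_{\ubar{\eta}} \subseteq \mathcal{U}_\eta$ for every $\eta \ge \ubar{\eta}$. Second, $\mathcal{U}_\eta = \mathcal{U}$ once $\eta$ exceeds the bound on $f$. Both were noted in \S\ref{sec:vhe} and follow from Assumption~\ref{ass:main}.
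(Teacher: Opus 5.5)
Your proposal is correct and follows the paper's own derivation. Like the paper, you identify $\ubar{\eta}$ with the min-max value via Proposition~\ref{prop:eta}, apply Theorem~\ref{thm:fundamental} at that threshold, and substitute into the simplified expression for $\Delta$. As you note, Corollary~\ref{cor:necessary}, which the paper also cites, is redundant once the simplified formula for $\Delta$ is in hand.
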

\noindent The above corollary shows under Assumptions~\ref{ass:main} and \ref{ass:convex} that the upper bound from Proposition~\ref{prop:bounds} is always attained. That is, it shows that if $\eta$ is the smallest possible scalar for which the reduced uncertainty set is nonempty (Proposition~\ref{prop:eta}), then the gap between the optimal values of the reduced robust optimization problem~\eqref{prob:robust_eta} and the robust optimization problem~\eqref{prob:robust} is equal to the gap between the optimal values of  \eqref{prob:robust_exchange} and \eqref{prob:robust}.

Corollary~\ref{cor:main} has a number of practical implications. It shows affirmatively that if there is a gap between the optimal values of \eqref{prob:robust} and \eqref{prob:robust_exchange}, and if Assumptions~\ref{ass:main} and \ref{ass:convex} are satisfied, then there exist policies with worst-case  performance guarantees that are strictly greater than the optimal value of \eqref{prob:robust} if $\bar{\btheta} \in \mathcal{U}$ and the optimal value of \eqref{prob:true} is small. In other words, Corollary~\ref{cor:main} provides an exact characterization of the problems for which nominal curves~\eqref{line:wc_eta_set} can be practically informative. 
More generally, Theorem~\ref{thm:fundamental} shows that incorporating an upper bound on the optimal value of the nominal problem~\eqref{prob:true} into the uncertainty set of a robust optimization problem~\eqref{prob:robust} can induce an interpolation between optimal pure strategies and mixed strategies. Specifically,  Theorem~\ref{thm:fundamental} shows that incorporating an upper bound on \eqref{prob:true} modifies an uncertainty set in such a way that can make the optimal pure strategy for the reduced robust optimization problem~\eqref{prob:robust_eta} equal in optimal value to the optimal mixed strategy for the original robust optimization problem \eqref{prob:robust}. This interpretation is demonstrated through the following toy example.\looseness=-1

\begin{example} \label{example:mainresult}
    Let $\mathcal{X}= \{-1, 1\}$, $\mathcal{U} = [-1,1]$, and $f(x,\theta) =\theta x$. Then the optimal value over pure strategies and the optimal pure strategies are
    \begin{align*}
        &\max_{x \in \mathcal{X}} \min_{\theta \in \mathcal{U}} \theta x = -1 \text{ and }  \argmax_{x \in \mathcal{X}} \min_{\theta \in \mathcal{U}} \theta x = \{-1,1\}.
    \end{align*}
Letting $\mathcal{P}(\mathcal{X})$ denote the set of mixed strategies for the outer problem, and letting $\delta_x$ denote the Dirac delta measure at $x$, we observe that the optimal value over mixed strategies and the optimal mixed strategy  are
    \begin{align*}
       &\max_{\mu \in \mathcal{P}(\mathcal{X})} \min_{\theta \in \mathcal{U}} \Exp_{X \sim \mu}[\theta X] =     \min_{\theta\in \mathcal{U}} \max_{x \in \mathcal{X}}  \theta x = 0 \text{ and } \argmax_{\mu \in \mathcal{P}(\mathcal{X})} \min_{\theta \in \mathcal{U}} \Exp_{X \sim \mu} [\theta X] = \{\sfrac{1}{2} \delta_{-1} + \sfrac{1}{2} \delta_{1}\}.
    \end{align*}
    It follows from Proposition~\ref{prop:eta} that $\mathcal{U}_\eta \neq \emptyset$ if and only if $\eta \in [\min_{\theta \in \mathcal{U}} \max_{x \in \mathcal{X}} \theta x, \infty) = [0,\infty)$, and for each such $\eta$ the reduced uncertainty set is 
    \begin{align*}
   \mathcal{U}_\eta = \left\{ \theta \in [-1,1]: \max_{x \in \{-1,1\}} \theta x \le \eta \right\} = \begin{cases}
   [-1,1], &\text{if } \eta \ge 1, \\ 
   [-\eta,\eta], &\text{if }  0 \le \eta < 1.
   \end{cases} 
    \end{align*}
   Therefore, for all $\eta \in [0,\infty)$, the optimal value over pure strategies and the optimal pure strategies for the reduced robust optimization problem are\looseness=-1
    \begin{align*}
     \max_{x \in \mathcal{X}} \min_{\theta  \in \mathcal{U}_\eta} \theta x      &= 
     \begin{cases}
     -1, &\text{if } \eta \ge 1, \\ 
     -\eta, &\text{if } 0 \le \eta < 1 
     \end{cases}   
\text{ and } \argmax_{x \in \mathcal{X}} \min_{\theta  \in \mathcal{U}_\eta} \theta x = \{-1,1\}.
    \end{align*}
    Thus, in the extreme case where $\eta = 0$, the optimal pure strategy for the reduced robust optimization problem is equal in optimal value to the optimal mixed strategy for the original robust optimization problem. \qed
\end{example}

In Example~\ref{example:mainresult}, the set of optimal solutions for the reduced robust optimization problem is the same for all $\eta$, in the sense that $\argmax_{x \in \mathcal{X}} \min_{\theta  \in \mathcal{U}_\eta} \theta x = \{-1,1\}$ for all $\eta \in [0,\infty)$. However, it is possible for the set of optimal solutions for \eqref{prob:robust_eta} to change with $\eta$; see Figure~\ref{fig:assortment_small} in \S\ref{sec:assortment:results}. The following Example~\ref{example:nonconvex} shows that  Corollary~\ref{cor:main} can fail to hold if Assumption~\ref{ass:convex} is violated. %
\begin{example} \label{example:nonconvex}
    Let $\mathcal{X} = \mathcal{U} = \{-1, 1\}$ and $f(x,\theta) = \theta x$. Then 
    \begin{gather*}
        \max_{x \in \mathcal{X}} \min_{\theta  \in \mathcal{U}} \theta x = -1 \text{ and }     \min_{\theta  \in \mathcal{U}} \max_{x \in \mathcal{X}} \theta x = 1. 
    \end{gather*}
Assumption~\ref{ass:convex} is violated because $\mathcal{U}$ is non-convex, and we observe for all $\eta \ge   \min_{\theta  \in \mathcal{U}} \max_{x \in \mathcal{X}} \theta x = 1$ that 
    \begin{align*}
     \max_{x \in \mathcal{X}} \min_{\theta  \in \mathcal{U}_\eta} \theta x =      \max_{x \in \{-1,1\}} \min_{\theta  \in \{-1,1\}: \max_{\hat{x} \in \{-1,1\}} \theta  \hat{x} \le \eta } \theta x = \max_{x \in \{-1,1\}} \min_{\theta  \in \{-1,1\}} \theta  x = -1,
    \end{align*}
    which implies that the equality from Corollary~\ref{cor:main} is not satisfied.
    \qed
\end{example}

In summary,  Corollary~\ref{cor:main} gives answers to the questions of \emph{if} and \emph{by how much} a belief that the optimal value of the nominal problem~\eqref{prob:true} is unlikely to be large can lead to performance guarantees that are less conservative than those obtained by robust optimization. In the following sections, we apply Corollary~\ref{cor:main} to practical settings to analyze the value of human expertise and shed light on the usefulness of nominal curves.\looseness=-1


\section{Identifiability in Data-Driven Assortment Optimization} \label{sec:assortment:example1}

In \S\ref{sec:assortment:example1}, we consider a parent company that seeks to identify a new assortment to recommend to a local store using  the transactional sales data generated by the store's past assortments. We present an example in which it is impossible for the parent company to identify an assortment that is guaranteed to outperform the store's best past assortment across all of the random utility maximization models that are consistent with the data generated by the store's past assortments.  We then show for the example that nominal curves~\eqref{line:wc_eta_set}  make it possible to identify assortments that in the worst case do not perform much worse than the store’s best past assortment, but are guaranteed to  strictly outperform the store’s best past assortment if the expected revenue of the store’s best past assortment happens to be close to optimal. %

\subsection{Problem Setup} \label{sec:assortment:setting}
 Assortment optimization is a class of problems from revenue management in which the goal is to select a subset of products for a store to offer to its customers that maximizes expected revenue.  Let $1,\ldots,n$ denote the products that the store  may offer, and the revenues of the products are given by $r_1,\ldots,r_n > 0$.  Let the no-purchase option be denoted by index $0$ with $r_0 = 0$,  and let  $\mathscr{S} \equiv \{S \subseteq \{0,\ldots,n\}: 0 \in S \}$ denote the set of all assortments.   Given $S \in \mathscr{S}$ and $i \in S$,  let $\bar{\mathbb{P}}(i | S) \in [0,1]$ denote the proportion of the store's customers that purchase product $i$ when offered assortment $S$. If the store's true discrete choice model $\bar{\Prb}$ were known, then an assortment that maximizes the store's expected revenue would be obtained by solving
    \begin{equation} \label{prob:assortment_general}
        \begin{aligned}
            &\underset{S \in \mathscr{S}}{\text{maximize}} && \sum_{i \in S} r_i \bar{\Prb}(i \mid S)
        \end{aligned}
    \end{equation}

We take the role of a parent company that wishes to find a new assortment to recommend to the local brick-and-mortar store to offer to its customers. 
 The  true discrete choice model $\bar{\Prb}$ that captures the local customer preferences at the store is unknown, and the parent company only has historical sales data from the past assortments that were chosen by the store. %
 Moreover, the store manager is risk-averse and reluctant to experiment with new assortments that might lead to a decline in expected revenue.
The goal of the parent company is to identify a new assortment that can be trusted to outperform the store's best past assortment     across all of the stochastically rational (that is, random utility maximization) discrete choice models that are consistent with the historical sales data generated by the store's past assortments.    

 The local store in our example has four products that it may offer,  denoted by $1,2,3,4$, and the per-unit revenues of the products are  $$r_1 = \$2,\; r_2 = \$10,\; r_3 = \$31,\;r_4 = \$40.$$
  We assume that the true discrete choice model $\bar{\Prb}$  of the local store is unknown, but the parent company has information about the true discrete choice model from aggregated transactional sales data generated from the past assortments offered by the store to its customers.   Specifically, the store has previously offered two different assortments, $$S_1 = \{0,2,3\},\; S_2 = \{0,1,3,4\},$$ and from these assortments the parent company observed from the transactional sales data that\looseness=-1
    \begin{align*}
    \bar{\Prb}(0 | S_1)  = 0&, \quad 
  \bar{\Prb}(2 | S_1) =     \bar{\Prb}(3 | S_1)  = \sfrac{1}{2},\\
        \bar{\Prb}(0 | S_2) =     \bar{\Prb}(3 | S_2)  = 0&, \quad  \bar{\Prb}(1 | S_2) =     \bar{\Prb}(4 | S_2)  = \sfrac{1}{2}.
    \end{align*}

    We assume that the two past assortments were offered by the store manager for a sufficiently long time such that there is little statistical uncertainty in these estimates.  As is typical in the revenue management literature, we also assume that  $\bar{\Prb}$ is a random utility maximization (RUM) model, which is a general class of discrete choice models that  is consistent with stochastic rationality \citep{blockmarschak} and subsumes most parametric families of discrete choice models studied in revenue management.  Define the uncertainty set of all RUM models that are consistent with the transactional sales data generated by the past assortments 
    \begin{align}
        \mathcal{U} \triangleq  \left \{  \Prb \in \mathscr{P}:  \quad \begin{aligned}
    {\Prb}(0 | S_1)  = 0&, \quad 
  {\Prb}(2 | S_1) =     {\Prb}(3 | S_1)  = \sfrac{1}{2}\\
        {\Prb}(0 | S_2) =     {\Prb}(3 | S_2)  = 0&, \quad  {\Prb}(1 | S_2) =     {\Prb}(4 | S_2)  = \sfrac{1}{2}
        \end{aligned}\right \}, \label{line:assortment_uncertaintyset}
    \end{align}
    where $\mathscr{P}$ denotes the set of all RUM models over the universe of products. The uncertainty set of the form \eqref{line:assortment_uncertaintyset}, which was introduced by \citet{farias2013nonparametric}, can be viewed as very general, as it 
is  guaranteed to contain the true discrete choice model $\bar{\Prb}$ if there is no noise in the transactional sales data and if $\bar{\Prb}$ is a RUM model.\footnote{Both of these assumptions can be relaxed by modifying the set from line~\eqref{line:assortment_uncertaintyset} to deviate from the historical choice probabilities by at most a given radius; see, for example, \citet[\S\S3.3 and 5.1.3]{farias2013nonparametric}.}\looseness=-1

 In view of the above, the identification problem faced by the parent company can be formally stated as follows. First,  we observe  that the expected revenues of the store's past assortments are
      \begin{align*}
        \sum_{i \in S_1} r_i \bar{\Prb}(i | S_1) = r_2 \frac{1}{2} + r_3 \frac{1}{2} = \$20.5, \quad   \sum_{i \in S_2} r_i \bar{\Prb}(i | S_2) = r_1 \frac{1}{2} + r_4 \frac{1}{2} = \$21.
    \end{align*} 
    The problem of identifying a new assortment that is guaranteed to outperform the store's best past assortment     across all of the RUM models that are consistent with the transactional sales data generated by the store's past assortments thus can be cast as the problem of  finding an assortment $S \in \mathscr{S}$ that satisfies 
      \begin{align}
 \sum_{i \in S} r_i {\Prb}(i | S) > \$21 \quad \forall \Prb \in \mathcal{U} \label{line:identification}
   \end{align}
where $\$21 = \max \{\$20.5,\$21\}$ is the expected revenue of the store's best past assortment.

\subsection{The Identification Problem Has No Solution} \label{sec:assortment:identification_impossible}

There are examples in which there exist assortments that outperform the store's best past assortment across all RUM models that are consistent with transactional sales data generated by the past assortments~\cite[\S3]{sturt2025value}.   However, the example from \S\ref{sec:assortment:setting} is not one of them: we show in \S\ref{sec:assortment:results} and Appendix~\ref{appx:assortment_extra} for each assortment $S \in \mathscr{S}$ that\looseness=-1
   \begin{align*}
 \min_{\Prb \in \mathcal{U}} \sum_{i \in S} r_i {\Prb}(i | S) \le \$21.
   \end{align*}
   As such, there is no solution to the identification problem in \S\ref{sec:assortment:setting}, in the sense that there does not exist an assortment that satisfies \eqref{line:identification}. This implies that there is no new assortment that the parent company can suggest to the store that can be trusted in the worst case to increase the store's expected revenue. 
   
   What should the parent company do in situations like that described above? 
The most common approach in the literature is to impose parametric assumptions on the structure of RUM models. Over the past two decades, an extensive literature in revenue management has studied parametric subclasses of RUM models such as the multinomial logit and nested logit models. By replacing the uncertainty set of all RUM models that fit the historical sales data $\mathcal{U}$ with only  RUM models from a specific parametric family (see \S\ref{sec:lit}), it may be possible to identify an assortment that satisfies \eqref{line:identification} over the restricted uncertainty set. %
   
   We take a different approach: rather than making parametric assumptions on the structure of the true discrete choice model, we instead use a belief about the process that generated the past assortments that the store offered to its customers. Indeed, recall that in the setting described above, we assumed that the two past assortments $S_1$ and $S_2$ were offered by the local store long enough to have accurate estimates of the choice probabilities. This suggests that the  two assortments may not have been chosen randomly or adversarially; rather, these assortments were  chosen by store managers informed by their  private information about local preferences from observing and interacting with typical store customers \citep{kok2008assortment,farias2017building}. As such,  the parent company may believe it is reasonable to assume that these past assortments are not highly suboptimal.  
   Motivated by this, we will use nominal curves to search for assortments that perform nearly as well as $S_2$ in the worst case, but are guaranteed to outperform $S_2$ if the expected revenue from $S_2$ happens to be close to the optimal value of the assortment optimization problem~\eqref{prob:assortment_general} with the true but unknown  discrete choice model $\bar{\Prb}$.\looseness=-1

       Our approach using nominal curves described above, similarly to the more common approach of imposing parametric assumptions on the set of RUM models, involves making assumptions about the true assortment optimization problem~\eqref{prob:assortment_general}.    However, our approach of using nominal curves is potentially attractive in practice, since it may be easier to justify the assumption that the local store used private information to select their past assortments than  justifying a parametric assumption on the structure of RUM models, e.g., that $\bar{\Prb}$ follows a multinomial logit model. Moreover, our approach provides explicit guarantees that hold if the belief about the near-optimality of the best past assortment $S_2$ is incorrect, whereas the misspecification error from a parametric RUM model in assortment optimization is difficult to analyze in general. 
       
       \subsection{Numerical Results} \label{sec:assortment:results}

 In Figure~\ref{fig:assortment_small}, we present the nominal curves corresponding to a subset of feasible assortments in the example from \S\ref{sec:assortment:setting}. Specifically, the figure plots the nominal curves $v_S(\eta) \triangleq \min_{\Prb \in \mathcal{U}_\eta} \sum_{i \in S} r_i \Prb(i \mid S)$ for the two past assortments $\{0,2,3\}$ and $\{0,1,3,4\}$ as well as a new assortment $ \{0,2,3,4\}$, where the reduced uncertainty set\looseness=-1%
     \begin{align*}
    \mathcal{U}_\eta =  \left \{  \Prb \in \mathcal{U}:  \; 
            \max_{S \in \mathscr{S}} \sum_{i \in S} r_i {\Prb}(i \mid S) \le \eta\right \} %
    \end{align*}
  is the set of all  the discrete choice models from the uncertainty set~\eqref{line:assortment_uncertaintyset} that, if true, would lead to an assortment optimization problem~\eqref{prob:assortment_general} with an optimal expected revenue of at most $\eta$. 
It is shown in Appendix~\ref{appx:assortment_extra} that the reduced robust optimization problem
\begin{align*}
            \max_{S \in \mathscr{S}} \min_{\Prb \in \mathcal{U}_\eta} \sum_{i \in S} r_i {\Prb}(i \mid S) %
\end{align*}
has $S_2$ as an optimal solution when $\eta\ge \tilde{\eta}$ %
and has $\{0,2,3,4\}$  as an optimal solution when $\eta\le \tilde{\eta}$ and $\mathcal{U}_\eta \neq \emptyset$, where $\tilde{\eta} \approx \$33.778$. %
The quantities  $v_S(\eta)$ are calculated by solving the linear program~(2) from \citet[\S2.4]{farias2013nonparametric} with the additional constraints that $\sum_{i \in S'} r_i \Prb(i \mid S') \le \eta$ for all $S' \in \mathscr{S}$. %

          \begin{figure}[t]
        \centering
        \includegraphics[width=0.9\textwidth]{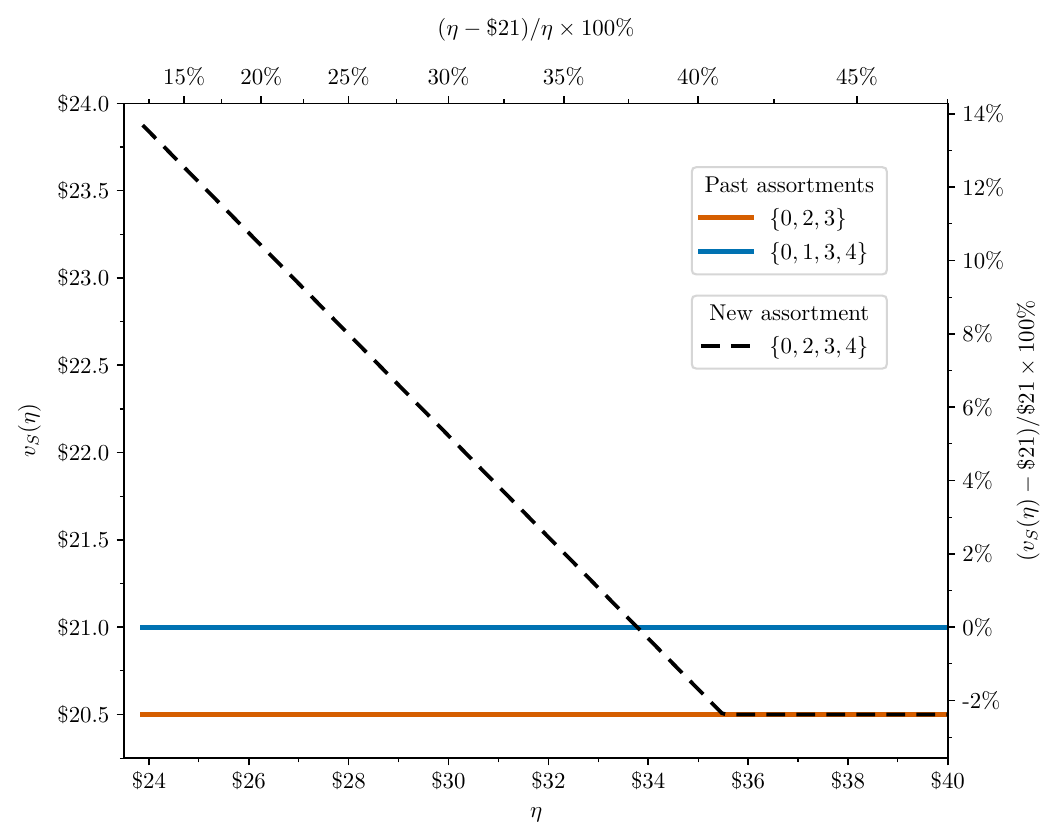}
        \caption{Nominal curves from numerical example in \S\ref{sec:assortment:results}.\looseness=-1}
         \label{fig:assortment_small}
    \end{figure}
    
To make sense of Figure~\ref{fig:assortment_small}, we begin by interpreting the labels. The values of $\eta$ at the bottom of the plot are possible upper bounds on the optimal value of the nominal problem~\eqref{prob:assortment_general}, and the values $(\eta -\$ 21)/\eta \times 100\%$ at the top of the plot show possible upper bounds on the optimality gap of the store's best past assortment.   The values of $v_S(\eta)$ on the left of the plot are the worst-case expected revenue of each assortment $S$ if the optimal value of the nominal problem~\eqref{prob:assortment_general} happens to be less than or equal to $\eta$. The values of $(v_S(\eta) - \$21)/\$21 \times 100\%$ on the right of the plot show the worst-case percentage increase in expected revenue from switching the store's best past assortment $S_2 = \{0,1,3,4\}$ to assortment $S$ if the optimal value of the nominal problem~\eqref{prob:assortment_general} happens to be less than or equal to $\eta$. 

For example, consider the line from Figure~\ref{fig:assortment_small} corresponding to the new assortment $\{0,2,3,4\}$ at the point where $\eta = \$30$. At that point, we observe that $(\eta - \$21) / \eta \times 100\% =  30\%$, and it is shown in Figure~\ref{fig:assortment_small} that $v_{\{0,2,3,4\}}(\$30) \approx \$22.097$. We can interpret this point of the nominal curve as saying that if the expected revenue of the store's best past assortment happened to be within 30\% of optimal, then the assortment $\{0,2,3,4\}$ is guaranteed to generate an expected revenue of at least $\$22.097$, or equivalently,  guaranteed to increase the store's expected revenue by at least $(v_{\{0,2,3,4\}}(\$30) - \$21)/\$21 \times 100\% = 5.223\%$. By similar reasoning, Figure~\ref{fig:assortment_small} shows that $\{0,2,3,4\}$ is guaranteed to increase the store's expected revenue by at least $7.988\%$ if the store's best past assortment happened to be within 25\% of optimal, and guaranteed to increase the store's expected revenue by at least 10.407\% if the store's best past assortment happened to be within 20\% of optimal.\looseness=-1

From the perspective of the parent company,  the main takeaways from Figure~\ref{fig:assortment_small} are the following. Recall that the expected revenue of the best past assortment $S_2$ is $\$21$, which must be within $(\$40 - \$21)/\$40 \times 100\% = 47.5\%$ of optimal since $r_4 = \$40$ is the revenue of the most expensive product. We observe from  the nominal curve for the assortment $\{0,2,3,4\}$ that if the optimal value of the assortment optimization problem~\eqref{prob:assortment_general} happens to be less than approximately $\$33.778$---equivalently, if the best past assortment happens to be within $(\$33.778 - \$21) / \$33.778 \times 100\% \approx 37.8\%$ of optimal---then the assortment $\{0,2,3,4\}$ is guaranteed to generate an expected revenue that is higher than the expected revenue of the store's best past assortment $S_2$. If the parent company's belief that the best past assortment is not highly suboptimal is correct, then the new assortment $\{0,2,3,4\}$ has a worst-case expected revenue that can be as large as $(\$23.875 - \$21) / \$21 \times 100\% = 13.7\%$ higher than the expected revenue of the best past assortment; this occurs at $\eta = \$23.875$. If the parent company's belief is incorrect, then the decrease in expected revenue from switching from the best past assortment to $\{0,2,3,4\}$ is at most $- ( \$20.5 - \$21) / \$21 \times 100\% = 2.38\%$.\looseness=-1

\emph{Should the parent company recommend the store to switch to the new assortment $\{0,2,3,4\}$?} The answer to this question is ultimately a  judgment call for the parent company. That said,  we find it reasonable to expect in practice that the parent company would recommend the assortment $\{0,2,3,4\}$  to the store. Indeed,  the nominal curve for the new assortment shows that the worst-case decrease in expected revenue for the store is at most  2.38\%, and that decrease can occur only if the store's best past assortment was highly suboptimal, which the parent company was assumed to believe is unlikely. Given that the downside risk is mild and believed to be unlikely, and given that the nominal curve shows that the upside for switching to $\{0,2,3,4\}$ may be considerable if the store's best past assortment was not highly suboptimal, the parent company and the store may view the new assortment as attractive. %

We conclude with a few additional remarks. First,  we observe from Figure~\ref{fig:assortment_small} that the nominal curves corresponding to the two past assortments $S_1$ and $S_2$  remain constant regardless of $\eta$. This follows from the construction of the uncertainty set~\eqref{line:assortment_uncertaintyset}, which implies that $v_{S_1}(\eta) =  \sum_{i \in S_1} r_i \bar{\Prb}(i | S_1) $ and $v_{S_2}(\eta)=  \sum_{i \in S_2} r_i \bar{\Prb}(i | S_2) $ for all $\eta$ such that $\mathcal{U}_\eta \neq \emptyset$. Second, the range of values for $\eta$ shown in Figure~\ref{fig:assortment_small} begins at $\eta = \$23.875$  because $\mathcal{U}_\eta$ was found to be empty for $\eta < \$23.875$. Figure~\ref{fig:assortment_small} also shows that $v_{\{0,2,3,4\}}(\$23.875) = \$23.875$. These results are consistent with  Proposition~\ref{prop:eta} and Corollary~\ref{cor:main} and imply that $\min_{\Prb \in \mathcal{U}} \max_{S \in \mathscr{S}} \sum_{i \in S} r_i \Prb(i | S) = \$23.875$. Third, the $x$-axis ends at $\eta = \$40$ because the optimal value of the assortment optimization problem~\eqref{prob:assortment_general} is at most the revenue of the most expensive product, $r_4 = \$40$.\looseness=-1

Nominal curves for assortments not shown in Figure~\ref{fig:assortment_small} can be found in Appendix~\ref{appx:assortment_extra}.

\section{Combinatorial Optimization with Budget Uncertainty Sets} \label{sec:applications:optimizercurse}
In \S\ref{sec:assortment:example1}, we considered an application  where the decision maker's belief about the optimal value of the nominal problem derived from a belief that it is unlikely that the  best past policy was highly suboptimal. In this section, we consider a different application in which the  decision maker's belief may derive from domain expertise and intuition. Specifically, we consider combinatorial optimization problems with uncertain  cost coefficients, where the decision maker believes that it  is unlikely that the optimal value of the true problem is small.\footnote{\S\ref{sec:applications:optimizercurse} focuses on nominal problems that are  minimization problems, and so the belief that the optimal value of the nominal problem is unlikely to be large is reversed to a belief that the optimal value is unlikely to be small. The results from \S\S\ref{sec:managerialexpertise}-\ref{sec:mainresults}  apply to minimization problems by negating the objective function.  } We begin in \S\ref{sec:applications:optimizercurse:setting} by formalizing the application setting. In \S\ref{sec:structure:budget}, we establish theoretical conditions for this application under which relatively loose beliefs about the optimal value of the nominal problem lead to improved performance guarantees compared to those that can be obtained from robust optimization.  In \S\ref{sec:motivatingexample_shortest_path}, we showcase the value of human expertise and the structural results through numerical experiments on shortest path problems.\looseness=-1

\subsection{Problem Setting} \label{sec:applications:optimizercurse:setting}
We consider combinatorial optimization programs of the form 
\begin{equation} \label{prob:lp}
\begin{aligned}
&\min_{I \in \mathcal{I}} \sum_{j \in I}\bar{c}_{j}
\end{aligned}
\end{equation}
where  $\bar{\bc} \ge \bzero$ and $\mathcal{I} \subset 2^{\{1,\ldots,n\}}$ is a collection of subsets of $\{1,\ldots,n\}$ that satisfies $\emptyset \notin \mathcal{I}$.  
For example, in the context of network problems such as bipartite matching, shortest path, and traveling salesman problems, each $I \in \mathcal{I}$ refers to the edges that are activated, and the cost coefficient $\bar{c}_{j} \ge 0$ denotes the cost of activating edge $j$.\looseness=-1

We focus on combinatorial optimization problems where the true cost coefficients $\bar{\bc}$  are unknown and represented by an uncertainty set $\mathcal{U}$. 
The study of uncertainty sets in the context of combinatorial optimization with unknown cost coefficients  has a rich history dating back to \citet{kouvelis1996robust}. Numerous approaches to designing uncertainty sets have been proposed  for the context of combinatorial optimization motivated by domain knowledge, risk aversion, and probabilistic guarantees; see \citet{aolaritei2026diffusion}.  %
One of the most widely studied choices of the uncertainty set in combinatorial optimization with uncertain cost coefficients is the so-called \emph{budget uncertainty set} \citep[\S3]{MelvynSim2003}. It is defined as\looseness=-1 
\begin{align}
    \mathcal{U} &= \left \{ \bc \in \R^n: \;
        \text{there exists } \bz \in [0,1]^n \text{ such that } \sum_{j=1}^n z_j \le \Gamma \text{ and }c_j = \hat{c}_j + d_{j} z_j \text{ for all } j \right \} \notag \\
        &= \left \{ \hat{\bc}+ \bd \odot \bz: \; \bz \in [0,1]^n \text{ and } \sum_{j=1}^n z_j \le \Gamma \right \}\label{line:budget_uncertaintyset} 
\end{align}
where $\hat{\bc} \ge \bzero$ is a lower bound estimate of the true  cost coefficients, $\bd > \bzero$ is an upper bound on the deviations of the cost coefficients, and the integer $\Gamma \ge 1$ controls the  number of cost coefficients that can achieve their maximum deviation. As an example, consider a shortest path or traveling salesman problem corresponding to emergency vehicle routing during a flash flood; in that example,  $\hat{c}_j$ equals the travel time through two locations denoted by edge $j$ during normal weather conditions, and $\bar{c}_j$ equals the increased travel time.\looseness=-1

In this section,  we consider applications where the decision maker believes that the optimal value of \eqref{prob:lp}---that is, the total cost of the optimal solution that we would have chosen if the true cost coefficients were known---is unlikely to be small. %
In the earlier example of emergency routing in the flash flood, for instance, the exact locations of the flood may be uncertain. Nonetheless, the dispatcher may  believe, based on years of experience dispatching during storms and knowledge of the local terrain, that the flooding is likely to be sufficiently   widespread  that the minimum total travel times will be increased compared to what can normally be achieved in the absence of a storm. A precise statement of such beliefs will be made in \S\ref{sec:structure:budget}.  \looseness=-1

For the problem setting described above, the reduced robust optimization problem takes the form
\begin{equation*}
\min_{I \in \mathcal{I}} \max_{\bc \in \mathcal{U}_\eta} \sum_{j \in I}  c_j \text{ where } \mathcal{U}_\eta = \left \{\bc \in \mathcal{U}: \min_{I \in \mathcal{I}}\sum_{j \in I} c_j \ge \eta  \right \}.
\end{equation*}
In the case of the budget uncertainty set, the above reduced robust optimization problem can be rewritten as\looseness=-1
\begin{equation} \label{prob:lp_eta}
\min_{I \in \mathcal{I}} \max_{\bz \in \mathcal{Z}_\eta} \sum_{j \in I} \left( \hat{c}_{j}   + d_j z_j \right) 
\end{equation}
where the uncertainty set and reduced uncertainty set are rewritten  as
\begin{align}
 \mathcal{Z} \coloneqq \left \{  \bz \in [0,1]^n: \sum_{j=1}^n z_j \le \Gamma \right\} \text{ and }
   \mathcal{Z}_\eta = \left \{ \bz \in \mathcal{Z}: \min_{I \in \mathcal{I}} \sum_{j \in I} (\hat{c}_{j} + d_j z_j)  \ge \eta \right \}. \label{line:comb_unc_eta}
\end{align}

It is straightforward to apply our results from \S\ref{sec:mainresults} to the above problem setting. Indeed, the budget uncertainty set $\mathcal{U}$ is convex and compact, the feasible set $\mathcal{I}$ is finite, and the function $f(I,\bc) = \sum_{j \in I} c_j $ is linear in $\bc$ and bounded on $\mathcal{I}$ and $\mathcal{U}$, and so  Assumptions~\ref{ass:main} and \ref{ass:convex} are satisfied. 
Since Assumptions~\ref{ass:main} and \ref{ass:convex} are satisfied, and since \eqref{prob:lp} is a minimization problem, it follows from Proposition~\ref{prop:bounds} in \S\ref{sec:simplebounds} that the optimal value of the reduced robust optimization problem~\eqref{prob:lp_eta} is lower bounded by 
\begin{equation} \label{prob:lp_robust_exchange}
\begin{aligned}
\max_{\bz \in \mathcal{Z}} \min_{I \in \mathcal{I}} \sum_{j \in I} \left( \hat{c}_{j} + d_j z_j \right)
\end{aligned}
\end{equation}
and it follows from Corollary~\ref{cor:main} from \S\ref{sec:maintheorem} that the value of human expertise is equal to the gap between the  optimal value of  \eqref{prob:lp_robust_exchange} and the optimal value of the robust optimization problem
\begin{align}
\min_{I \in \mathcal{I}} \max_{\bz \in \mathcal{Z}} \sum_{j  \in I} \left(\hat{c}_j + d_j z_j \right ).  \label{prob:lp_robust}
\end{align}

\subsection{Structural Results for Budget Uncertainty Sets} \label{sec:structure:budget}

In this subsection, we develop structural results for the reduced robust optimization problem~\eqref{prob:lp_eta} with the budget uncertainty set $\mathcal{Z}$ from \eqref{line:comb_unc_eta}. The main results of this subsection are theoretical conditions under which the optimal value of the reduced robust optimization problem~\eqref{prob:lp_eta} is strictly  less than the optimal value of the robust optimization problem~\eqref{prob:lp_robust}. %
In particular, these theoretical results  reveal that {surprisingly  loose} beliefs about the optimal value of the nominal problem can lead to improved performance guarantees compared to those from robust optimization, and these theoretical findings  are corroborated numerically in \S\ref{sec:motivatingexample_shortest_path}.  All omitted proofs from the present subsection can be found in Appendix~\ref{app:proofs_structure}. 

Our analysis will make use of the \emph{non-robust} optimization problem, defined as \looseness=-1
\begin{align}
\min_{I \in \mathcal{I}} \sum_{j \in I} \hat{c}_j  \label{prob:nonrobust}
\end{align} 
The relationships between the non-robust problem~\eqref{prob:nonrobust} and the problems from \S\ref{sec:applications:optimizercurse:setting} are summarized as follows.\looseness=-1
\begin{lemma} \label{lem:lp_relationships}
The optimal value of the non-robust problem~\eqref{prob:nonrobust} is a lower bound on the optimal value of \eqref{prob:lp_robust_exchange}. Moreover, if $\eta \le \min_{I \in \mathcal{I}} \sum_{j \in I} \hat{c}_j  $, then $\mathcal{Z}_\eta = \mathcal{Z}$. 
\end{lemma}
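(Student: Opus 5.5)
Both claims follow directly from the nonnegativity of the deviation terms in the budget uncertainty set, so the plan is two short direct arguments rather than anything structural.

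For the first claim, I will exhibit a feasible point of the outer maximization in \eqref{prob:lp_robust_exchange} whose objective value equals that of the non-robust problem~\eqref{prob:nonrobust}. The natural choice is $\bz = \bzero$. It lies in $\mathcal{Z}$ because $\bzero \in [0,1]^n$ and $\sum_j 0 = 0 \le \Gamma$. At this point the inner problem becomes $\min_{I \in \mathcal{I}} \sum_{j \in I} \hat{c}_j$, which is exactly \eqref{prob:nonrobust}. Since the optimal value of \eqref{prob:lp_robust_exchange} is a maximum over $\mathcal{Z}$, it is at least the value attained at $\bz = \bzero$. (Alternatively, one could note that $d_j z_j \ge 0$ for every $\bz \in \mathcal{Z}$, so every inner minimum is at least the non-robust value. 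Either argument suffices, but the single feasible point is the cleaner one.)

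For the second claim, I will show $\mathcal{Z} \subseteq \mathcal{Z}_\eta$; the reverse inclusion holds by the definition in \eqref{line:comb_unc_eta}. Fix $\bz \in \mathcal{Z}$ and any $I \in \mathcal{I}$. Because $\bd > \bzero$ and $\bz \ge \bzero$, we have
\begin{equation*}
\sum_{j \in I}(\hat{c}_j + d_j z_j) \ge \sum_{j \in I} \hat{c}_j \ge \min_{I' \in \mathcal{I}} \sum_{j \in I'} \hat{c}_j \ge \eta .
\end{equation*}
Minimizing the left-hand side over $I \in \mathcal{I}$ preserves the bound, because $\mathcal{I}$ is finite and nonempty (nonemptiness is implicit in the problem being well posed). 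Hence $\bz$ satisfies the defining constraint of $\mathcal{Z}_\eta$, and so $\bz \in \mathcal{Z}_\eta$.

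No step is a real obstacle. The only points needing care are using the direction of the inequalities correctly, given that \S\ref{sec:applications:optimizercurse} uses the minimization convention (so $\mathcal{Z}_\eta$ has a $\ge \eta$ constraint), and checking that $\bzero \in \mathcal{Z}$.
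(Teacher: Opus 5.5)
Your proof is correct and follows the paper's argument. The paper's observation that $\hat{\bc} \in \mathcal{U}$ is your feasible point $\bz = \bzero \in \mathcal{Z}$, and the second claim uses the same inequality $\sum_{j \in I}(\hat{c}_j + d_j z_j) \ge \sum_{j \in I} \hat{c}_j$ for all $I \in \mathcal{I}$ and $\bz \in \mathcal{Z}$, which follows from $\bd > \bzero$ and $\bz \ge \bzero$.
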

\begin{proof}
It follows from the definition of the budget uncertainty set~\eqref{line:budget_uncertaintyset} that $\hat{\bc} \in \mathcal{U}$, which implies that the optimal value of  \eqref{prob:nonrobust} is a lower bound on the optimal value of \eqref{prob:lp_robust_exchange}.  It additionally follows from $\bd > \bzero$ and $\mathcal{Z} \subseteq [0,1]^n$ that $\sum_{j \in I} (\hat{c}_j + d_j z_j)   \ge \sum_{j \in I} \hat{c}_j$ for all $I \in \mathcal{I}$ and $\bz \in \mathcal{Z}$. This implies that $\mathcal{Z} = \mathcal{Z}_\eta$ for all $\eta \le \min_{I \in \mathcal{I}} \sum_{j  \in I} \hat{c}_j $. 
\end{proof}
The first part of Lemma~\ref{lem:lp_relationships}, combined with Proposition~\ref{prop:bounds}, implies that the optimal value of the non-robust problem~\eqref{prob:nonrobust} is a lower bound on the optimal value of the  reduced robust optimization problem~\eqref{prob:lp_eta}. The second part of Lemma~\ref{lem:lp_relationships}, combined with  Proposition~\ref{prop:eta}, shows that the reduced robust optimization problem~\eqref{prob:lp_eta} is well defined and can have an optimal value different from that of the original robust optimization problem~\eqref{prob:lp_robust} if and only if
\begin{align}
\eta \in \mathbb{H} \coloneqq \left (\min_{I \in \mathcal{I}} \sum_{j \in I} \hat{c}_j,  \max_{\bz \in \mathcal{Z}} \min_{I \in \mathcal{I}} \sum_{j \in I}  \left( \hat{c}_{j}  + d_j z_j \right) \right]. \label{line:H} 
\end{align}

Equipped with the above,  we are ready to develop our main results of \S\ref{sec:structure:budget}.  To motivate these results, recall that Theorem~\ref{thm:fundamental} shows that the optimal value of the reduced robust optimization problem~\eqref{prob:lp_eta} will be equal to the optimal value of the max-min problem~\eqref{prob:lp_robust_exchange} when $\eta$ is equal to the optimal value of \eqref{prob:lp_robust_exchange}.  This implies that if there is a gap between the optimal values of the min-max problem~\eqref{prob:lp_robust} and the max-min problem~\eqref{prob:lp_robust_exchange},  there must exist some `threshold' for $\eta \in \mathbb{H}$ after which the optimal value of the reduced robust optimization problem~\eqref{prob:lp_eta} becomes strictly less than the optimal value of the min-max problem~\eqref{prob:lp_robust}. Our main results of \S\ref{sec:structure:budget} characterize the location of this threshold by exploiting the structure of the budget uncertainty set.\looseness=-1

The first main result of this subsection, stated below as Corollary~\ref{cor:extreme}, establishes simple sufficient conditions for the optimal value of the reduced robust optimization problem~\eqref{prob:lp_eta} to be strictly less than the optimal value of the robust optimization problem~\eqref{prob:lp_robust} \emph{for all $\eta \in \mathbb{H}$}. Corollary~\ref{cor:extreme} is a special case of  a more general result about the worst-case performance of fixed solutions under the reduced budget uncertainty set, stated below as Theorem~\ref{thm:extreme}. In the following, we define $\text{supp}(\bz) \coloneqq \{j: z_j > 0 \}$.\looseness=-1

\begin{theorem} \label{thm:extreme}
Let $\tilde{I} \in \mathcal{I}$. Then
\begin{align}
\max_{\bz \in \mathcal{Z}_\eta}   \sum_{j \in \tilde{I}}  \left( \hat{c}_j + d_j z_j  \right) < \max_{\bz \in \mathcal{Z}}   \sum_{j \in \tilde{I}}  \left( \hat{c}_j + d_j z_j  \right)  \quad \forall \eta \in \mathbb{H} \label{line:unnec}
\end{align}
if and only if for every optimal solution $\tilde{\bz}$ of $\max_{\bz \in \mathcal{Z}} \sum_{j \in \tilde{I}} (\hat{c}_j + d_j z_j)$, there exists an optimal solution $I^* = I^*(\tilde{\bz})$ for the non-robust problem~\eqref{prob:nonrobust} that satisfies $\textnormal{supp}(\tilde{\bz}) \cap I^* = \emptyset$.\looseness=-1
\end{theorem}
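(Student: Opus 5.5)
The plan is to reduce the strict inequality \eqref{line:unnec} to a statement about where the maximizers of the unrestricted problem lie. Throughout I write $c_0 \coloneqq \min_{I \in \mathcal{I}} \sum_{j \in I} \hat{c}_j$ for the optimal value of \eqref{prob:nonrobust}, $g(\bz) \coloneqq \min_{I \in \mathcal{I}} \sum_{j \in I}(\hat{c}_j + d_j z_j)$, and $\mathcal{Z}^*(\tilde{I})$ for the set of optimal solutions of $\max_{\bz \in \mathcal{Z}} \sum_{j \in \tilde{I}}(\hat{c}_j + d_j z_j)$. The function $g$ is a minimum of finitely many affine functions, so it is continuous and concave. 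Hence each $\mathcal{Z}_\eta = \{\bz \in \mathcal{Z} : g(\bz) \ge \eta\}$ is compact. It is also nonempty for $\eta \in \mathbb{H}$, because $\eta \le \max_{\bz \in \mathcal{Z}} g(\bz)$ there. By the argument of Lemma~\ref{lem:lp_relationships}, $g(\bz) \ge c_0$ for every $\bz \in \mathcal{Z}$.

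\textbf{Step 1 (key characterization).} I will show that \eqref{line:unnec} holds if and only if $g(\tilde{\bz}) = c_0$ for every $\tilde{\bz} \in \mathcal{Z}^*(\tilde{I})$.

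For the ``if'' direction, fix $\eta \in \mathbb{H}$, so $\eta > c_0$. Then no $\tilde{\bz} \in \mathcal{Z}^*(\tilde{I})$ belongs to $\mathcal{Z}_\eta$. Since $\mathcal{Z}_\eta$ is nonempty and compact and the objective is linear, the maximum over $\mathcal{Z}_\eta$ is attained at some $\bz' \in \mathcal{Z}_\eta \subseteq \mathcal{Z}$ with $\bz' \notin \mathcal{Z}^*(\tilde{I})$. Therefore its value is strictly below the maximum over $\mathcal{Z}$.

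For the ``only if'' direction, suppose some $\tilde{\bz} \in \mathcal{Z}^*(\tilde{I})$ has $m \coloneqq g(\tilde{\bz}) > c_0$. Then $m \le \max_{\bz \in \mathcal{Z}} g(\bz)$, so $m \in \mathbb{H}$. Taking $\eta = m$ gives $\tilde{\bz} \in \mathcal{Z}_m$, so the maximum over $\mathcal{Z}_m$ equals the maximum over $\mathcal{Z}$, and \eqref{line:unnec} fails at $\eta = m$.

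There is one degenerate case to handle. If $\mathbb{H}$ is empty, \eqref{line:unnec} holds vacuously. In that case $\max_{\bz \in \mathcal{Z}} g(\bz) = c_0$, so $g \equiv c_0$ on $\mathcal{Z}$, and the right-hand condition of Step 1 also holds. So the equivalence is consistent in this case too.

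\textbf{Step 2 (translate to supports).} I will show that for $\tilde{\bz} \in \mathcal{Z}$, $g(\tilde{\bz}) = c_0$ holds if and only if some optimal $I^*$ of \eqref{prob:nonrobust} satisfies $\textnormal{supp}(\tilde{\bz}) \cap I^* = \emptyset$.

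If such an $I^*$ exists, then $g(\tilde{\bz}) \le \sum_{j \in I^*}(\hat{c}_j + d_j\tilde{z}_j) = \sum_{j \in I^*}\hat{c}_j = c_0$, and the reverse inequality $g(\tilde{\bz}) \ge c_0$ always holds.

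Conversely, if $g(\tilde{\bz}) = c_0$, let $I^*$ attain the minimum defining $g(\tilde{\bz})$. Then
\[
c_0 = \sum_{j \in I^*}\hat{c}_j + \sum_{j \in I^*} d_j \tilde{z}_j \ge c_0 + \sum_{j \in I^*} d_j \tilde{z}_j .
\]
Since $\bd > \bzero$ and $\tilde{\bz} \ge \bzero$, every term $d_j \tilde{z}_j$ with $j \in I^*$ must vanish, so $\tilde{z}_j = 0$ on $I^*$. It follows that $\sum_{j \in I^*}\hat{c}_j = c_0$, so $I^*$ is optimal for \eqref{prob:nonrobust}, and $\textnormal{supp}(\tilde{\bz}) \cap I^* = \emptyset$.

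Combining Steps 1 and 2 gives the theorem. The main obstacle I expect is the ``if'' direction of Step 1, which needs strictness for every $\eta \in \mathbb{H}$ and not only in a limit. This is exactly where compactness of $\mathcal{Z}_\eta$, coming from continuity of $g$, is needed, so that the restricted maximum is attained at a point that is not optimal over $\mathcal{Z}$.
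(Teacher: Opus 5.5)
Your proof is correct and follows essentially the same route as the paper's. In the ``if'' direction, the disjoint optimal $I^*$ forces $\min_{I \in \mathcal{I}} \sum_{j \in I}(\hat{c}_j + d_j \tilde{z}_j) = \min_{I \in \mathcal{I}} \sum_{j \in I} \hat{c}_j < \eta$, so no worst-case $\tilde{\bz}$ lies in $\mathcal{Z}_\eta$; in the ``only if'' direction, a worst-case $\tilde{\bz}$ with strictly larger full-information cost stays in $\mathcal{Z}_\eta$ for some $\eta \in \mathbb{H}$. The differences are cosmetic: you route both directions through the intermediate condition $g(\tilde{\bz}) = c_0$ (getting the support statement from a minimizer of $g(\tilde{\bz})$ rather than the paper's case split over optimal and non-optimal $I$), and you state explicitly that the maximum over the compact set $\mathcal{Z}_\eta$ is attained, which the paper uses implicitly.
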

In words,  Theorem~\ref{thm:extreme} says that a fixed solution $\tilde{I} \in \mathcal{I}$ satisfies \eqref{line:unnec}  if and only if every worst-case realization from the original uncertainty set $\tilde{\bz} \in \argmax_{\bz \in \mathcal{Z}} \sum_{j \in \tilde{I}} (\hat{c}_j + d_j z_j)$ does not affect the objective value of at least one optimal solution of the non-robust problem~\eqref{prob:nonrobust}.  Corollary~\ref{cor:extreme} extends Theorem~\ref{thm:extreme} to the optimal value of the reduced robust optimization problem~\eqref{prob:lp_eta} and replaces the `if and only if' claim from Theorem~\ref{thm:extreme} with simpler sufficient conditions.\looseness=-1

\begin{corollary}\label{cor:extreme}
We have
\begin{align*}
\min_{I \in \mathcal{I}} \max_{\bz \in \mathcal{Z}_\eta}   \sum_{j \in I}  \left( \hat{c}_j + d_j z_j  \right) < \min_{I \in \mathcal{I}} \max_{\bz \in \mathcal{Z}}   \sum_{j \in I}  \left( \hat{c}_j + d_j z_j  \right)  \quad \forall \eta \in \mathbb{H}
\end{align*}
if there exists an optimal solution  $I^{\text{RO}}$ of the robust optimization problem~\eqref{prob:lp_robust} that satisfies either of the following two conditions:
\begin{enumerate}[(a)]
\item $|I^{\text{RO}}| \ge \Gamma$ and there exists an optimal solution $I^*$ for the non-robust problem~\eqref{prob:nonrobust} that satisfies $I^{\text{RO}} \cap I^* = \emptyset$.\looseness=-1 \label{suffcond:a}
\item The problem $\max_{\bz \in \mathcal{Z}} \sum_{j \in I^{\text{RO}}} (\hat{c}_j + d_j z_j)$ has a unique optimal solution $\tilde{\bz}$, and there is an optimal solution  $I^*$ for the non-robust problem~\eqref{prob:nonrobust} that satisfies $\textnormal{supp}(\tilde{\bz}) \cap I^* = \emptyset$. \label{suffcond:b}
\end{enumerate}
\end{corollary}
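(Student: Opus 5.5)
The plan is to reduce Corollary~\ref{cor:extreme} to Theorem~\ref{thm:extreme} applied to the fixed solution $\tilde{I} = I^{\text{RO}}$. For every $\eta \in \mathbb{H}$ we have
\begin{align*}
\min_{I \in \mathcal{I}} \max_{\bz \in \mathcal{Z}_\eta} \sum_{j \in I} (\hat{c}_j + d_j z_j) \le \max_{\bz \in \mathcal{Z}_\eta} \sum_{j \in I^{\text{RO}}} (\hat{c}_j + d_j z_j),
\end{align*}
and the right-hand side of the claimed inequality equals $\max_{\bz \in \mathcal{Z}} \sum_{j \in I^{\text{RO}}} (\hat{c}_j + d_j z_j)$ by optimality of $I^{\text{RO}}$. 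Hence it suffices to show that \eqref{line:unnec} holds for $\tilde{I} = I^{\text{RO}}$. By Theorem~\ref{thm:extreme}, this amounts to showing that for every maximizer $\tilde{\bz}$ of $\max_{\bz \in \mathcal{Z}} \sum_{j \in I^{\text{RO}}} (\hat{c}_j + d_j z_j)$ there is a non-robust optimal $I^*$ with $\textnormal{supp}(\tilde{\bz}) \cap I^* = \emptyset$.

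Condition~(\ref{suffcond:b}) gives this immediately. The maximizer is unique, so there is only one $\tilde{\bz}$ to check, and the hypothesis supplies the required $I^*$.

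For condition~(\ref{suffcond:a}), the key step is to show that every maximizer $\tilde{\bz}$ has $\textnormal{supp}(\tilde{\bz}) \subseteq I^{\text{RO}}$. Once that holds, the $I^*$ disjoint from $I^{\text{RO}}$ works uniformly for all maximizers. The argument is an exchange argument using $|I^{\text{RO}}| \ge \Gamma$ and $\bd > \bzero$.
\begin{itemize}
\item Since $\Gamma$ is an integer, $|I^{\text{RO}}| \ge \Gamma$ means we can set $z_j = 1$ on some $\Gamma$ indices of $I^{\text{RO}}$.
\item So the optimum of the linear program $\max \sum_{j \in I^{\text{RO}}} d_j z_j$ over $\mathcal{Z}$ is attained by vectors supported in $I^{\text{RO}}$ that use the full budget.
\item Suppose some maximizer $\tilde{\bz}$ put positive mass $\tilde{z}_k > 0$ on an index $k \notin I^{\text{RO}}$. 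That mass contributes nothing to the objective. The budget restricted to $I^{\text{RO}}$ is then at most $\Gamma - \tilde{z}_k < \Gamma \le |I^{\text{RO}}|$, so some $j \in I^{\text{RO}}$ has $\tilde{z}_j < 1$.
\item Shifting mass $\min(\tilde{z}_k, 1-\tilde{z}_j)$ from $k$ to $j$ keeps the vector feasible and strictly increases the objective by $d_j$ times that amount. This contradicts optimality, so $\textnormal{supp}(\tilde{\bz}) \subseteq I^{\text{RO}}$.
\end{itemize}

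The main obstacle is this support-containment step in case~(\ref{suffcond:a}). One has to confirm that the budget is truly binding on $I^{\text{RO}}$, which is where $|I^{\text{RO}}| \ge \Gamma$ and the integrality of $\Gamma$ enter, and that strict positivity of $\bd$ forbids wasting budget outside $I^{\text{RO}}$. The remaining steps are bookkeeping with Theorem~\ref{thm:extreme}.
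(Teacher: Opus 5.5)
Your proposal is correct and follows essentially the same route as the paper: apply Theorem~\ref{thm:extreme} with $\tilde{I} = I^{\text{RO}}$, dispatch (b) by uniqueness, and for (a) show that every worst-case $\tilde{\bz}$ is supported in $I^{\text{RO}}$. Your mass-shifting argument spells out the support-containment step that the paper only asserts; it does not actually need integrality of $\Gamma$, only $|I^{\text{RO}}| \ge \Gamma$ and $\bd > \bzero$.
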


Informally, the above corollary reveals that the conservatism of the reduced robust optimization problem~\eqref{prob:lp_eta} is driven by the {similarity} of the optimal solutions of the original robust optimization problem~\eqref{prob:lp_robust} and the non-robust problem~\eqref{prob:nonrobust}. More formally,   Corollary~\ref{cor:extreme} says that if either of the sufficient conditions from Corollary~\ref{cor:extreme} is satisfied, then any $\eta \in \mathbb{H}$  will make the optimal value of the reduced robust optimization problem~\eqref{prob:lp_eta} strictly less than the optimal value of the original robust optimization problem~\eqref{prob:lp_robust}.
 The first sufficient condition \ref{suffcond:a} is that there exists  an optimal solution of the original robust optimization problem~\eqref{prob:lp_robust} that has a sufficiently large support and is disjoint from an optimal solution for  the non-robust problem~\eqref{prob:nonrobust}. The second sufficient condition~\ref{suffcond:b} is that the optimal solution of the original robust optimization problem~\eqref{prob:lp_robust} has a unique worst-case realization from the uncertainty set $\mathcal{Z}$, and that this worst-case realization does not affect the cost coefficients of an optimal solution for the non-robust problem~\eqref{prob:nonrobust}. We remark that the uniqueness of an optimal solution of $\max_{\bz \in \mathcal{Z}} \sum_{j \in I^{\text{RO}}} (\hat{c}_j + d_j z_j)$ is guaranteed under the assumptions of Corollary~\ref{cor:extreme}  if $| I^{\text{RO}}| \ge \Gamma$ and the values of  $d_j$ for $j \in I^{\text{RO}}$  are distinct. 

It is worthwhile to make a couple of remarks about the practical significance of  Theorem~\ref{thm:extreme} and Corollary~\ref{cor:extreme}. First, we believe that the sufficient conditions of Corollary~\ref{cor:extreme}, or the more general condition from Theorem~\ref{thm:extreme}, are relatively mild and may be satisfied in realistic applications. To illustrate this, we show in \S\ref{sec:motivatingexample_shortest_path} that these conditions can be satisfied in a numerical experiment from \citet[\S6.3]{MelvynSim2003}.  A second takeaway from Corollary~\ref{cor:extreme} is a new insight about the conservatism of robust combinatorial optimization with budget uncertainty sets. Specifically, our analysis shows that if the optimal solution for the robust optimization problem~\eqref{prob:lp_robust} is much different than the optimal solution of the non-robust problem~\eqref{prob:nonrobust} (e.g., if either condition~\ref{suffcond:a} or \ref{suffcond:b} is satisfied), then Corollary~\ref{cor:extreme} reveals that the worst-case realizations of $\bz \in \mathcal{Z}$ for the optimal solution of the robust optimization problem~\eqref{prob:lp_robust} are those  that would allow for high-quality best-case performance. This insight is formalized by the following proposition.\looseness=-1

\begin{proposition} \label{prop:broader}
If $I^{\text{RO}}$ is an optimal solution of the robust optimization problem~\eqref{prob:lp_robust} that satisfies conditions~\ref{suffcond:a} or \ref{suffcond:b}, then every worst-case realization  
\begin{align*}
\tilde{\bz} \in \argmax_{\bz \in \mathcal{Z}}  \sum_{j \in I^{\text{RO}}}  \left( \hat{c}_j + d_j z_j  \right)
\end{align*}
satisfies
\begin{align*}
 \min_{I \in \mathcal{I}} \sum_{j \in I} (\hat{c}_{j} + d_j \tilde{z}_j) =   \min_{I \in \mathcal{I}} \sum_{j \in I} \hat{c}_j 
\end{align*}
\end{proposition}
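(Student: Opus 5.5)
The plan is to show that, under either condition, every worst-case realization $\tilde{\bz}$ for $I^{\text{RO}}$ has support disjoint from some optimal solution $I^*$ of the non-robust problem~\eqref{prob:nonrobust}. Once that holds, the conclusion is a two-sided squeeze. On one side, $\sum_{j \in I^*}(\hat{c}_j + d_j\tilde{z}_j) = \sum_{j\in I^*}\hat{c}_j = \min_{I}\sum_{j\in I}\hat{c}_j$, so the minimum over $\mathcal{I}$ at $\tilde{\bz}$ is at most the non-robust optimal value. On the other side, $\bd>\bzero$ and $\tilde{\bz}\ge \bzero$ give $\sum_{j\in I}(\hat{c}_j+d_j\tilde{z}_j)\ge \sum_{j\in I}\hat{c}_j$ for every $I$, as in the proof of Lemma~\ref{lem:lp_relationships}, which yields the reverse inequality.

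Under condition~\ref{suffcond:b} the disjointness claim is immediate. The worst-case realization $\tilde{\bz}$ is unique, so every element of the argmax is that $\tilde{\bz}$. The hypothesis then supplies $I^*$ with $\textnormal{supp}(\tilde{\bz})\cap I^*=\emptyset$.

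Under condition~\ref{suffcond:a} I will first show that every maximizer $\tilde{\bz}$ of $\max_{\bz\in\mathcal{Z}}\sum_{j\in I^{\text{RO}}}(\hat{c}_j+d_jz_j)$ satisfies $\textnormal{supp}(\tilde{\bz})\subseteq I^{\text{RO}}$. Suppose instead that $\tilde{z}_k>0$ for some $k\notin I^{\text{RO}}$. If $\sum_j\tilde{z}_j<\Gamma$ is not binding, or if some $j\in I^{\text{RO}}$ has $\tilde{z}_j<1$, then we can shift mass $\epsilon$ from $z_k$ to such a $j$. This keeps feasibility and strictly increases the objective, because $d_j>0$, contradicting optimality. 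Otherwise $\tilde{z}_j=1$ for all $j\in I^{\text{RO}}$, so $\sum_j \tilde{z}_j\ge |I^{\text{RO}}|+\tilde{z}_k>|I^{\text{RO}}|\ge\Gamma$, which contradicts feasibility; this is exactly where the hypothesis $|I^{\text{RO}}|\ge\Gamma$ enters. The case analysis needs a little care, since the shifting argument works whenever some coordinate in $I^{\text{RO}}$ is below $1$, regardless of whether the budget binds. With $\textnormal{supp}(\tilde{\bz})\subseteq I^{\text{RO}}$ established, the hypothesis $I^{\text{RO}}\cap I^*=\emptyset$ gives $\textnormal{supp}(\tilde{\bz})\cap I^*=\emptyset$ for the single $I^*$ provided by condition~\ref{suffcond:a}.

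The main obstacle is this support-containment step under condition~\ref{suffcond:a}. Everything else is the short sandwich argument. Note that this route does not require the optimality of $I^{\text{RO}}$ for the robust problem~\eqref{prob:lp_robust}; the optimality is only needed for the link to Corollary~\ref{cor:extreme}.
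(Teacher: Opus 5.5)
Your proof is correct, but it takes a more direct route than the paper's. The paper argues through the reduced sets. It first uses Theorem~\ref{thm:extreme}, with the disjointness established in the proof of Corollary~\ref{cor:extreme}, to get $\max_{\bz\in\mathcal{Z}_\eta}\sum_{j\in I^{\text{RO}}}(\hat{c}_j+d_jz_j)<\max_{\bz\in\mathcal{Z}}\sum_{j\in I^{\text{RO}}}(\hat{c}_j+d_jz_j)$ for every $\eta\in\mathbb{H}$. It deduces that every worst-case $\tilde{\bz}$ lies outside $\mathcal{Z}_\eta$, i.e., $\min_{I\in\mathcal{I}}\sum_{j\in I}(\hat{c}_j+d_j\tilde{z}_j)<\eta$ for all $\eta\in\mathbb{H}$. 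It then lets $\eta$ decrease to the left endpoint $\min_{I\in\mathcal{I}}\sum_{j\in I}\hat{c}_j$ of $\mathbb{H}$.

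You stop at the intermediate fact both routes share, namely that $\textnormal{supp}(\tilde{\bz})$ misses some optimal $I^*$ of \eqref{prob:nonrobust}. You then close with a two-line sandwich, so you never need $\mathcal{Z}_\eta$, $\mathbb{H}$, the limiting step, or Theorem~\ref{thm:extreme} itself.

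The paper's route buys the interpretation it wants to stress: worst-case realizations for $I^{\text{RO}}$ are eliminated by every nontrivial belief $\eta$. Yours buys a self-contained argument. It also gives an explicit exchange proof of the support-containment step under condition~\ref{suffcond:a}, which the paper covers in one sentence, and it shows that integrality of $\Gamma$ is not needed there.

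Your sandwich also makes visible that, since $\bd>\bzero$, the conclusion for a given $\tilde{\bz}$ is equivalent to the disjointness condition. Any $I$ with $\sum_{j\in I}(\hat{c}_j+d_j\tilde{z}_j)=\min_{I'\in\mathcal{I}}\sum_{j\in I'}\hat{c}_j$ must satisfy $\textnormal{supp}(\tilde{\bz})\cap I=\emptyset$ and be optimal for \eqref{prob:nonrobust}.

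One minor presentational point. In the support-containment step, the clean dichotomy is whether some coordinate indexed by $I^{\text{RO}}$ is below $1$, or all of them equal $1$. In the first case you shift mass, which leaves $\sum_j z_j$ unchanged. In the second, $|I^{\text{RO}}|\ge\Gamma$ forces infeasibility. The clause about a slack budget is unnecessary, as you yourself note.
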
 

As far as we can tell, Proposition~\ref{prop:broader} offers a new insight within the robust optimization literature. Specifically, it shows that the parameters that are worst for the optimal solution of the robust optimization problem are parameters under which a decision maker with full information could still achieve the smallest possible optimal cost. 
Therefore, if the decision maker believes the  optimal value of the nominal problem~\eqref{prob:lp} is likely to be strictly greater than the optimal value of the non-robust problem~\eqref{prob:nonrobust}, then the parameters that are worst for the optimal solution of the robust optimization problem  are ruled out. 

This proposition is particularly relevant in the context of the present paper, as it suggests that nominal curves for optimal solutions for \eqref{prob:lp_robust} may be practically informative. Indeed, suppose that a decision maker computes an optimal solution $I^{\text{RO}}$ for the original robust optimization problem~\eqref{prob:lp_robust} but is hesitant  about deploying that decision out of concern that the performance of $I^{\text{RO}}$ on non-worst-case realizations of $\bz \in \mathcal{Z}$ may be nearly as poor as the worst-case performance of $I^{\text{RO}}$. Proposition~\ref{prop:broader} suggests that this concern can potentially be alleviated through the nominal curve of $I^{\text{RO}}$,  if the decision maker believes that  the optimal value of the nominal problem~\eqref{prob:lp} is likely to be strictly higher than the optimal value of the non-robust problem~\eqref{prob:nonrobust}. We elaborate on this  takeaway from Proposition~\ref{prop:broader} in numerical experiments in \S\ref{sec:motivatingexample_shortest_path}.

We conclude \S\ref{sec:structure:budget} by developing results about the optimal value of the reduced robust optimization problem~\eqref{prob:lp_eta} in settings where the conditions from Theorem~\ref{thm:extreme} and Corollary~\ref{cor:extreme} are not satisfied.  We develop these results by drawing connections between the reduced uncertainty set $\mathcal{Z}_\eta$ and the \emph{hitting set problem}. 
To begin, let the collection of near-optimal  solutions for the non-robust problem~\eqref{prob:nonrobust} be denoted by \looseness=-1
\begin{align}
\mathcal{I}_\eta \coloneqq \left \{ I \in \mathcal{I}:  \eta - \sum_{j \in I} \hat{c}_j  > 0 \right \}.\label{prob:nonrobust_old}
\end{align}
It follows from the above discussion  that the above set is nonempty for all $\eta \in \mathbb{H}$.  In view of the above notation, the following  lemma shows  that the constraints in the reduced uncertainty set $\mathcal{Z}_\eta$ are driven by the elements of $\mathcal{I}_\eta$. 

\begin{lemma} \label{lem:reduced_comb}
The reduced uncertainty set satisfies
\begin{align*}
\mathcal{Z}_\eta &= \left \{  \bz \in \mathcal{Z}:   \sum_{j \in I}  d_j z_j   \ge \eta -  \sum_{j \in I}  \hat{c}_j  \; \forall I \in \mathcal{I}_\eta \right  \}.
\end{align*}
\end{lemma}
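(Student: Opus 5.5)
The plan is to prove the set equality by double inclusion, starting from the definition of $\mathcal{Z}_\eta$ in \eqref{line:comb_unc_eta}. First rewrite the single constraint $\min_{I \in \mathcal{I}} \sum_{j \in I} (\hat{c}_j + d_j z_j) \ge \eta$ as the family of constraints
\begin{align*}
\sum_{j \in I} d_j z_j \ge \eta - \sum_{j \in I} \hat{c}_j \quad \forall I \in \mathcal{I}.
\end{align*}
This step only uses that a minimum over the finite collection $\mathcal{I}$ is at least $\eta$ if and only if every term is at least $\eta$. Then split the index set $\mathcal{I}$ into $\mathcal{I}_\eta$ and its complement $\mathcal{I} \setminus \mathcal{I}_\eta$.

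For $I \in \mathcal{I} \setminus \mathcal{I}_\eta$, the definition \eqref{prob:nonrobust_old} gives $\eta - \sum_{j \in I} \hat{c}_j \le 0$. Every $\bz \in \mathcal{Z}$ lies in $[0,1]^n$, and $\bd > \bzero$, so $\sum_{j \in I} d_j z_j \ge 0 \ge \eta - \sum_{j \in I} \hat{c}_j$. These constraints therefore hold automatically for every $\bz \in \mathcal{Z}$ and can be dropped. That leaves exactly the constraints indexed by $I \in \mathcal{I}_\eta$, which is the claimed description.

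Both inclusions follow at once from this argument. The inclusion "$\subseteq$" holds because we keep a subset of the constraints. The inclusion "$\supseteq$" holds because the dropped constraints are redundant on $\mathcal{Z}$. The argument does not need $\eta \in \mathbb{H}$. If $\mathcal{I}_\eta$ were empty, both sides would reduce to $\mathcal{Z}$, which is consistent with Lemma~\ref{lem:lp_relationships}.

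There is no real obstacle here. The only point needing care is the sign argument that makes the constraints for $I \notin \mathcal{I}_\eta$ vacuous, and it relies solely on $\bz \ge \bzero$ and $\bd > \bzero$, the same facts used in the proof of Lemma~\ref{lem:lp_relationships}.
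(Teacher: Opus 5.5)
Your proposal is correct and follows the paper's proof: rewrite the minimum constraint as one constraint per $I \in \mathcal{I}$, then drop the constraints with $I \notin \mathcal{I}_\eta$. Those constraints are redundant on $\mathcal{Z}$ because their right-hand sides are nonpositive, while $\bz \ge \bzero$ and $\bd > \bzero$ make the left-hand sides nonnegative.
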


The above lemma shows that as $\eta$ increases, the \emph{number} as well as the \emph{tightness} of the constraints in the reduced uncertainty set $\mathcal{Z}_\eta$ increase.
In particular, this makes it possible to derive sufficient conditions for when the optimal value of \eqref{prob:lp_eta} is less than that of \eqref{prob:lp_robust} based on the combinatorial structure of $\mathcal{I}_\eta$. To that end, let the hitting set number of $\mathcal{I}_\eta$ be defined as the minimum cardinality of a set that intersects each element of $\mathcal{I}_\eta$:\looseness=-1
\begin{align}
\textsc{HittingSetNum}(\mathcal{I}_\eta) &\coloneqq
 \min_{\bz \in \{0,1\}^n} \left \{ \sum_{j=1}^n z_j: \sum_{j \in I} z_j \ge 1 \; \text{for all } I \in \mathcal{I}_\eta  \right \}.  \label{line:hittingset}
\end{align}
Equipped with the above terminology, we obtain the following sufficient condition for the optimal value of the reduced robust optimization problem~\eqref{prob:lp_eta} to be strictly less than the optimal value of the robust optimization problem~\eqref{prob:lp_robust}  that can hold when the conditions from Theorem~\ref{thm:extreme} and Corollary~\ref{cor:extreme} are not satisfied. 
\begin{proposition} \label{prop:hitting}
Suppose there exists an optimal solution $I^{\text{RO}}$ of the robust optimization problem~\eqref{prob:lp_robust} for which $\max_{\bz \in \mathcal{Z}} \sum_{j \in I^{\text{RO}}} (\hat{c}_j + d_j z_j)$ has a unique solution. If $\eta \in \mathbb{H}$ and $\textsc{HittingSetNum}(\mathcal{I}_\eta) > \Gamma$, then 
\begin{align*}
\min_{I \in \mathcal{I}} \max_{\bz \in \mathcal{Z}_\eta}   \sum_{j \in I}  \left( \hat{c}_j + d_j z_j  \right) < \min_{I \in \mathcal{I}} \max_{\bz \in \mathcal{Z}}   \sum_{j \in I}  \left( \hat{c}_j + d_j z_j  \right)
\end{align*}
\end{proposition}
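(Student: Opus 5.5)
The plan is to show that the particular policy $I^{\text{RO}}$ has strictly smaller worst-case cost over $\mathcal{Z}_\eta$ than over $\mathcal{Z}$. Indeed,
\[
\min_{I \in \mathcal{I}} \max_{\bz \in \mathcal{Z}_\eta} \sum_{j \in I}(\hat{c}_j + d_j z_j) \le \max_{\bz \in \mathcal{Z}_\eta} \sum_{j \in I^{\text{RO}}}(\hat{c}_j + d_j z_j),
\]
and the right-hand side of the claimed inequality equals $\max_{\bz \in \mathcal{Z}} \sum_{j \in I^{\text{RO}}}(\hat{c}_j + d_j z_j)$ by optimality of $I^{\text{RO}}$. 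So it suffices to prove
\[
\max_{\bz \in \mathcal{Z}_\eta} \sum_{j \in I^{\text{RO}}}(\hat{c}_j+d_jz_j) < \max_{\bz \in \mathcal{Z}} \sum_{j \in I^{\text{RO}}}(\hat{c}_j+d_jz_j).
\]

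Let $\tilde{\bz}$ be the unique maximizer over $\mathcal{Z}$. The first step is to show $\tilde{\bz} \notin \mathcal{Z}_\eta$.

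\begin{itemize}
\item Since the objective is linear and $\mathcal{Z}$ is a polytope with a unique maximizer, $\tilde{\bz}$ is a vertex of $\mathcal{Z}$. Vertices of the budget polytope with integer $\Gamma$ are integral, so $\tilde{\bz} \in \{0,1\}^n$ and $|\textnormal{supp}(\tilde{\bz})| = \sum_j \tilde{z}_j \le \Gamma$.
\item Because $\textsc{HittingSetNum}(\mathcal{I}_\eta) > \Gamma$, the $0$-$1$ vector $\tilde{\bz}$ cannot be a hitting set. Hence there is some $I \in \mathcal{I}_\eta$ with $\sum_{j \in I} \tilde{z}_j = 0$.
\item For that $I$, $\sum_{j\in I} d_j\tilde z_j = 0 < \eta - \sum_{j\in I}\hat c_j$, where the strict inequality is the definition of $\mathcal{I}_\eta$ in \eqref{prob:nonrobust_old}.
\item By Lemma~\ref{lem:reduced_comb}, this violated constraint gives $\tilde{\bz}\notin\mathcal{Z}_\eta$.
\end{itemize}

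The second step turns $\tilde{\bz}\notin\mathcal{Z}_\eta$ into a strict gap, using compactness and uniqueness.

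\begin{itemize}
\item Since $\eta \in \mathbb{H}$, Proposition~\ref{prop:eta} (applied with signs flipped for minimization) gives $\mathcal{Z}_\eta \neq \emptyset$.
\item $\mathcal{Z}_\eta$ is closed: it is the superlevel set of the continuous function $\bz \mapsto \min_{I\in\mathcal{I}}\sum_{j\in I}(\hat c_j+d_jz_j)$, intersected with the compact set $\mathcal{Z}$. So $\mathcal{Z}_\eta$ is compact, and the maximum of the linear objective over it is attained at some $\bz'\in\mathcal{Z}_\eta\subseteq\mathcal{Z}$.
\item Since $\bz'\neq\tilde{\bz}$ and $\tilde{\bz}$ is the unique maximizer over $\mathcal{Z}$, the objective at $\bz'$ is strictly less than at $\tilde{\bz}$, which is the required inequality.
\end{itemize}

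The step needing the most care is the integrality claim. I plan to justify it by citing total unimodularity of the constraint system $\{0\le z_j\le1,\ \sum_j z_j\le\Gamma\}$: an interval matrix with an appended all-ones row is TU. Alternatively, a direct argument works. A unique optimum of $\max \sum_{j\in I^{\text{RO}}} d_j z_j$ over $\mathcal{Z}$ puts $z_j=1$ on the $\min(\Gamma,|I^{\text{RO}}|)$ coordinates with largest $d_j$. Any fractional coordinate, or tie among the $d_j$, would allow mass to shift and produce a second optimum, contradicting uniqueness. Either way $\tilde{\bz}$ is $0$-$1$ with at most $\Gamma$ ones, so it is a candidate hitting set of size $\le\Gamma$ and the hitting-set bound applies.
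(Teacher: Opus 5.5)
Your proposal is correct and takes essentially the same route as the paper. The paper first proves $\mathcal{Z}_\eta \cap \{0,1\}^n = \emptyset$ using Lemma~\ref{lem:reduced_comb} and the hitting-set bound, then argues that the unique worst-case realization $\tilde{\bz}$ is an integral extreme point of $\mathcal{Z}$, so $\tilde{\bz} \notin \mathcal{Z}_\eta$; you run the same hitting-set argument on $\tilde{\bz}$ directly, and your second step (nonemptiness and compactness of $\mathcal{Z}_\eta$, so the maximum is attained at a point other than $\tilde{\bz}$) makes explicit the attainment argument the paper leaves implicit when it passes to the strict inequality.
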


The above proposition gives a condition different from Corollary~\ref{cor:extreme} 
for when the optimal value of the reduced robust optimization problem~\eqref{prob:lp_eta} will be strictly less than the optimal value of the robust optimization problem~\eqref{prob:lp_robust}. The main idea is that if $\textsc{HittingSetNum}(\mathcal{I}_\eta) > \Gamma$, then it must be the case that $| \text{supp}(\bz)| > \Gamma$ for all $\bz \in \mathcal{Z}_\eta$.  This implies that none of the extreme points of $\mathcal{Z}$ are contained in $\mathcal{Z}_\eta$, and thus the unique optimal solution of the linear program  $\max_{\bz \in \mathcal{Z}} \sum_{j \in I^{\text{RO}}} (\hat{c}_j + d_j z_j)$ is not contained in $\mathcal{Z}_\eta$.  Proposition~\ref{prop:hitting} thus shows that a strictly positive value of human expertise can arise from the non-robust problem~\eqref{prob:nonrobust} having a diversity of near-optimal solutions.  %

\subsection{Numerical Experiment} \label{sec:motivatingexample_shortest_path}

  \begin{figure}[t]
  \centering
    \includegraphics[width=0.8\textwidth]
      {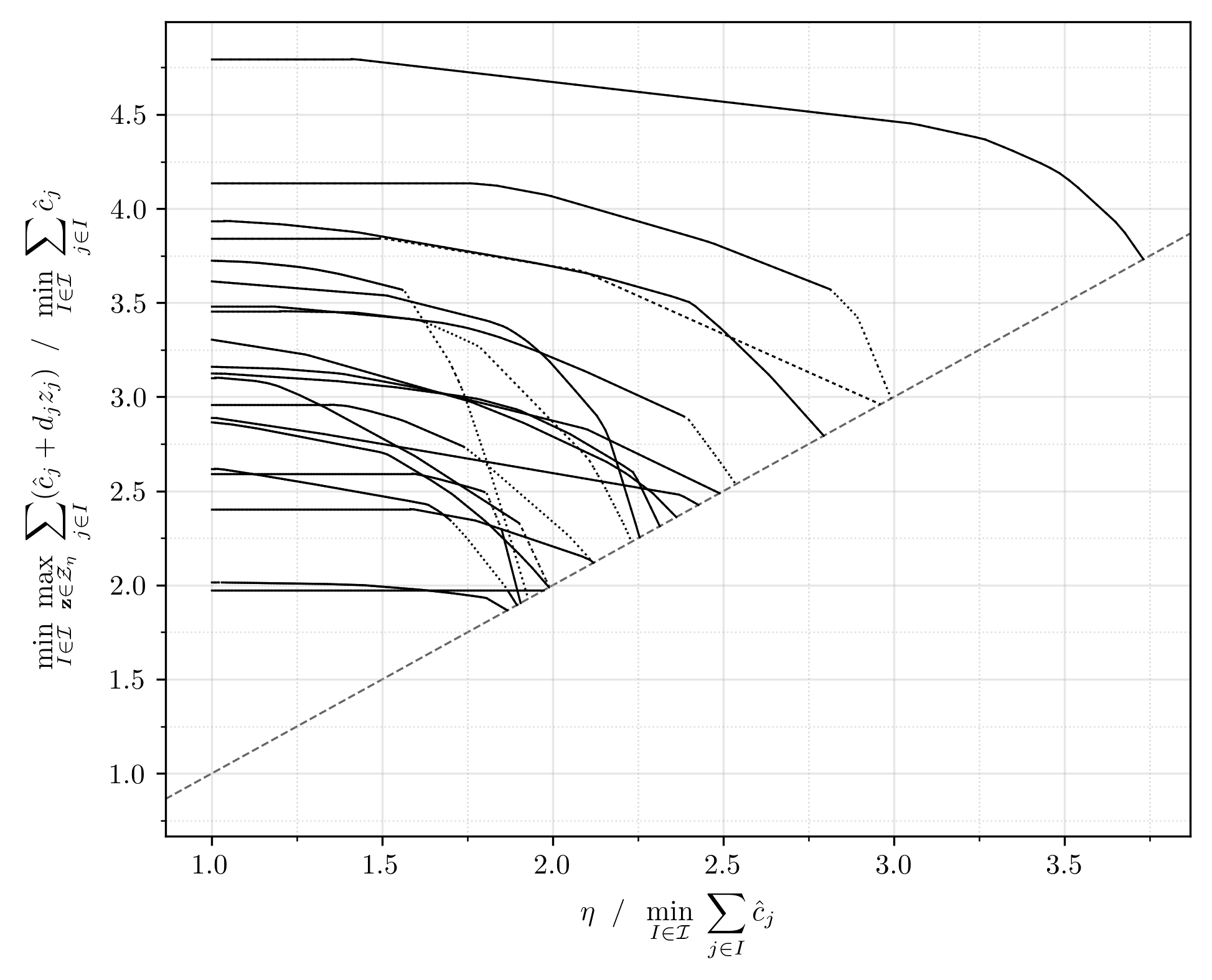}
    \caption{Results of numerical experiments from
      \S\ref{sec:motivatingexample_shortest_path}.}
    \label{fig:shortest_path_plot}
  \end{figure}
  
Equipped with the results from \S\ref{sec:structure:budget}, we illustrate the value of human expertise and nominal curves through numerical experiments on a shortest path problem with a budget uncertainty set. Our experiments closely follow the setup from \citet[\S6.3]{MelvynSim2003}. Each instance of the experiment involves generating a random graph with nodes in $[0,1]^2$, estimating edge costs $\hat{\bc}$ as the Euclidean distance between each pair of nodes in the graph, and using a budget uncertainty set anchored at $\hat{\bc}$ of the form given in \eqref{line:budget_uncertaintyset}.\looseness=-1

In greater detail, each instance of the experiment consists of a randomly generated graph with $| \mathcal{N}| = 60$ nodes and $| \mathcal{E}|=295$ edges in a two-dimensional Euclidean space. The start node is assigned to coordinate $(0,0)$, the target node is placed at coordinate $(1,1)$, and the remaining nodes are assigned uniformly at random over $[0,1]^2$. The edges are chosen uniformly at random from the set of all pairs of nodes. The set of feasible decisions $\mathcal{I} \subset 2^{\mathcal{E}}$ is the set of all paths from the start node to the target node. The edge cost $\hat{c}_{i,j}$ is calculated as the Euclidean distance between nodes $i$ and $j$. Each $d_{i,j} \ge 0$ denotes the maximum deviation of the cost on edge $(i,j)$ from its prediction $\hat{c}_{i,j}$, and $d_{i,j}$ is set to be equal to $\gamma_{i,j} \hat{c}_{i,j}$ , where $\gamma_{i,j}$ is uniformly distributed in $[0, 8]$. The budget parameter is $\Gamma = 3$.  

Our  numerical experiment consists of 20 randomly generated instances. In each instance, we solve the robust optimization problem~\eqref{prob:lp_robust} and the non-robust problem~\eqref{prob:nonrobust} once,  and the reduced robust optimization problem~\eqref{prob:lp_eta} once for each of 101 different values of $\eta$ equally spaced over the closure of $\mathbb{H}$. We solve the reduced robust optimization problem~\eqref{prob:lp_eta} for each such $\eta$ using the mixed-integer linear programming formulation from Proposition~\ref{prop:reform:simple} in \S\ref{sec:algorithm:reform}. The results of the experiments are shown in Figure~\ref{fig:shortest_path_plot} and Table~\ref{tab:paths}.\looseness=-1

Figure~\ref{fig:shortest_path_plot} plots a line for each of the 20 randomly generated  instances of the shortest path problem. For each instance, the corresponding line in Figure~\ref{fig:shortest_path_plot} shows the optimal value of the reduced robust optimization problem~\eqref{prob:lp_eta} as a function of $\eta$, where both quantities are normalized by the optimal value of the non-robust problem~\eqref{prob:nonrobust}. In other words, Figure~\ref{fig:shortest_path_plot} shows the minimization equivalent of the upper bound  curves~\eqref{prob:robust_eta_set} from \S\ref{sec:managerialexpertise:generating_policies}.  The line corresponding to an instance is solid if the optimal solution for the robust optimization problem~\eqref{prob:lp_robust} is an optimal solution for the reduced robust optimization problem~\eqref{prob:lp_eta}, and the line is dotted otherwise. The diagonal dashed line shows $y = x$.  Consistent with Corollary~\ref{cor:main}, we observe from Figure~\ref{fig:shortest_path_plot} for each instance that when $\eta$ reaches its maximum value, the optimal value of \eqref{prob:lp_eta} is equal to $\eta$. 

\begin{table}[t]
\centering
\caption{Optimal solutions for numerical experiments from \S\ref{sec:motivatingexample_shortest_path}.}
\label{tab:paths}
{\footnotesize
\begin{tabular}{c l l c c}
\toprule
\multicolumn{1}{c}{} & \multicolumn{1}{c}{Optimal Solution of \eqref{prob:nonrobust}} & \multicolumn{1}{c}{Optimal Solution of \eqref{prob:lp_robust}} & \multicolumn{1}{c}{Cond~\ref{suffcond:a}} & \multicolumn{1}{c}{Cond~\ref{suffcond:b}} \\
\midrule
1 & $1 \to 31 \to 41 \to 33 \to 60$ & $1 \xrightarrow{\star} 31 \to 41 \xrightarrow{\star} 12 \to 34 \to 43 \xrightarrow{\star} 60$ &  &  \\
2 & $1 \to 12 \to 34 \to 60$ & $1 \xrightarrow{\star} 58 \to 2 \xrightarrow{\star} 27 \xrightarrow{\star} 34 \to 60$ &  & $\checkmark$ \\
3 & $1 \to 23 \to 49 \to 17 \to 34 \to 14 \to 60$ & $1 \xrightarrow{\star} 8 \to 2 \xrightarrow{\star} 26 \to 56 \to 37 \to 21 \to 14 \xrightarrow{\star} 60$ &  &  \\
4 & $1 \to 16 \to 40 \to 60$ & $1 \to 16 \to 8 \xrightarrow{\star} 48 \xrightarrow{\star} 38 \to 28 \to 17 \xrightarrow{\star} 60$ &  & $\checkmark$ \\
5 & $1 \to 36 \to 44 \to 60$ & $1 \xrightarrow{\star} 36 \xrightarrow{\star} 44 \xrightarrow{\star} 60$ &  &  \\
6 & $1 \to 52 \to 60$ & $1 \xrightarrow{\star} 52 \xrightarrow{\star} 46 \to 33 \to 58 \xrightarrow{\star} 49 \to 10 \to 60$ &  &  \\
7 & $1 \to 47 \to 44 \to 60$ & $1 \to 38 \xrightarrow{\star} 3 \to 39 \to 59 \xrightarrow{\star} 44 \xrightarrow{\star} 60$ &  &  \\
8 & $1 \to 2 \to 60$ & $1 \xrightarrow{\star} 45 \to 46 \xrightarrow{\star} 2 \xrightarrow{\star} 60$ &  &  \\
9 & $1 \to 25 \to 35 \to 38 \to 60$ & $1 \xrightarrow{\star} 16 \to 43 \xrightarrow{\star} 30 \to 41 \to 38 \xrightarrow{\star} 60$ &  &  \\
10 & $1 \to 60$ & $1 \xrightarrow{\star} 45 \xrightarrow{\star} 40 \xrightarrow{\star} 60$ & $\checkmark$ & $\checkmark$ \\
11 & $1 \to 44 \to 36 \to 41 \to 60$ & $1 \to 44 \xrightarrow{\star} 36 \to 41 \xrightarrow{\star} 9 \xrightarrow{\star} 2 \to 60$ &  &  \\
12 & $1 \to 56 \to 59 \to 7 \to 41 \to 60$ & $1 \xrightarrow{\star} 56 \to 59 \to 7 \xrightarrow{\star} 13 \xrightarrow{\star} 60$ &  &  \\
13 & $1 \to 41 \to 37 \to 60$ & $1 \xrightarrow{\star} 31 \xrightarrow{\star} 60$ &  &  \\
14 & $1 \to 51 \to 22 \to 23 \to 60$ & $1 \xrightarrow{\star} 26 \to 38 \to 36 \xrightarrow{\star} 14 \to 23 \xrightarrow{\star} 60$ &  &  \\
15 & $1 \to 23 \to 60$ & $1 \xrightarrow{\star} 26 \xrightarrow{\star} 60$ &  &  \\
16 & $1 \to 46 \to 14 \to 32 \to 60$ & $1 \to 34 \xrightarrow{\star} 46 \xrightarrow{\star} 9 \xrightarrow{\star} 60$ & $\checkmark$ & $\checkmark$ \\
17 & $1 \to 36 \to 58 \to 60$ & $1 \xrightarrow{\star} 54 \xrightarrow{\star} 23 \xrightarrow{\star} 60$ & $\checkmark$ & $\checkmark$ \\
18 & $1 \to 38 \to 8 \to 24 \to 54 \to 18 \to 60$ & $1 \xrightarrow{\star} 38 \to 8 \to 24 \xrightarrow{\star} 54 \to 18 \xrightarrow{\star} 60$ &  &  \\
19 & $1 \to 60$ & $1 \xrightarrow{\star} 60$ &  &  \\
20 & $1 \to 60$ & $1 \xrightarrow{\star} 51 \xrightarrow{\star} 55 \to 50 \xrightarrow{\star} 60$ & $\checkmark$ & $\checkmark$ \\
\bottomrule
\end{tabular}
}
\end{table}

Table~\ref{tab:paths} shows the optimal solution for the non-robust problem~\eqref{prob:nonrobust} and the optimal solution for the robust optimization problem~\eqref{prob:lp_robust} for each of the random instances. For the optimal solution $I^{\text{RO}}$ of the robust optimization problem~\eqref{prob:lp_robust}, a star is indicated above each edge in Table~\ref{tab:paths} that is in the support of the optimal solution of $\max_{\bz \in \mathcal{Z}} \sum_{j \in I^{\text{RO}}} (\hat{c}_j + d_j z_j)$. Table~\ref{tab:paths} uses checkmarks to indicate whether Conditions~\ref{suffcond:a} and \ref{suffcond:b} from Corollary~\ref{cor:extreme} are satisfied.

There are several takeaways from Figure~\ref{fig:shortest_path_plot} and Table~\ref{tab:paths}. First, we observe that there is a gap between the optimal values of \eqref{prob:lp_robust} and \eqref{prob:lp_robust_exchange}---and thus a value of human expertise that is strictly positive---in 19 out of the 20 random instances. Moreover,  when there is a gap between the optimal values of \eqref{prob:lp_robust} and \eqref{prob:lp_robust_exchange}, the gap can be considerable, with an average ratio of 1.389 between the optimal values of \eqref{prob:lp_robust} and \eqref{prob:lp_robust_exchange} over the 19 instances with a strict gap.  Second, Table~\ref{tab:paths} shows that six of the 20 random instances satisfied at least one of the sufficient conditions from Corollary~\ref{cor:extreme}, thereby implying that the optimal value of the reduced robust optimization problem~\eqref{prob:lp_eta} is strictly less than the optimal value of the robust optimization problem~\eqref{prob:lp_robust} for all $\eta \in \mathbb{H}$. Third, in many of the curves in Figure~\ref{fig:shortest_path_plot}, the optimal solution for the robust optimization problem~\eqref{prob:lp_robust} remained optimal for the reduced robust optimization problem~\eqref{prob:lp_eta} for many values of $\eta$, including those for which the optimal value of \eqref{prob:lp_eta} is strictly less than that of \eqref{prob:lp_robust}. This is consistent with Proposition~\ref{prop:broader} and demonstrates that nominal curves of optimal solutions for traditional robust combinatorial optimization problems~\eqref{prob:lp_robust} with  budget uncertainty sets can reveal non-trivial performance guarantees.\looseness=-1 %


\section{Finding Policies and Controlling Disappointment} \label{sec:practical}

To use the developments from this paper in real-world applications, we recommend using the approach described in  \S\ref{sec:managerialexpertise:generating_policies} for generating a menu of policies to offer to a decision  maker. That is, we recommend generating a menu of policies by solving the reduced robust optimization problem~\eqref{prob:robust_eta} over a range of values of $\eta$ and comparing those policies to one another through their nominal curves~\eqref{line:wc_eta_set}. By comparing the nominal curves of the policies, the decision maker can apply their own judgment about which scenarios of the nominal problem they believe are plausible and then select one of the policies.

In this section, we  facilitate the practical deployment of the approach from \S\ref{sec:managerialexpertise:generating_policies} in several ways.  In \S\ref{sec:applications:optimizercurse:algorithms}, we propose two  computational methods that can be used to solve the reduced robust optimization problem~\eqref{prob:robust_eta} and generate the menu of policies. In \S\ref{appx:disappointment}, we propose two alternative approaches to generating  a menu of policies that may be useful when the decision maker does not find the nominal curves of the policies obtained from the approach in \S\ref{sec:managerialexpertise:generating_policies} to be desirable. \looseness=-1 %

\subsection{Algorithms} \label{sec:applications:optimizercurse:algorithms}
In this section, we propose two computational methods for solving the reduced robust optimization problem~\eqref{prob:robust_eta}. The first  is an exact reformulation for a class of mixed-integer linear-fractional programs. The second is more general and based on the cutting plane method. 

\subsubsection{Exact Reformulations} \label{sec:algorithm:reform}
In some applications, the reduced robust optimization problem~\eqref{prob:robust_eta} can be solved by reformulating it as a compact mixed-integer linear program.   Below, we present such reformulations for the following applications when the uncertainty set is a nonempty bounded polyhedron.%

\begin{example} \label{example:comb} Combinatorial optimization problems of the form 
\begin{align*}
\max_{\bx \in \{0,1\}^n: \bba \bx \le \bb}\;   \sum_{j=1}^n \bar{\theta}_j  x_j
\end{align*}
where $\bba \in \R^{m \times n}$ is totally unimodular, $\bb$ is integral, and the constraints of the form $\bzero \le \bx \le \bone$ have without loss of generality been embedded within $\bba \bx \le \bb$. %
Examples include shortest path, bipartite matching, and linear assignment problems. 
\end{example}
\begin{example} \label{example:mnl} Cardinality-constrained assortment optimization problems under  the multinomial logit model, \looseness=-1
\begin{align}
\max_{S \subseteq \{1,\ldots,n\}: |S| \le k}  \sum_{i \in S} \frac{r_i \bar{\theta}_i}{1 + \sum_{j \in S} \bar{\theta}_j}\label{prob:mnl_eta}
\end{align} 
where $r_1,\ldots,r_n > 0$ and $\bar{\theta}_1,\ldots,\bar{\theta}_n \ge 0$. Note that \eqref{prob:mnl_eta} does not satisfy Assumption~\ref{ass:convex} because $\btheta \mapsto  \sum_{i \in S} {r_i {\theta}_i}/{(1 + \sum_{j \in S} {\theta}_j)}$ is a nonconvex function in general. Even though the conditions of Corollary~\ref{cor:main} are not satisfied in this setting, the value of human expertise can still be strictly positive; %
we show an example of this at the end of \S\ref{sec:algorithm:reform}.\looseness=-1

\end{example}

To develop a mixed-integer linear programming reformulation of \eqref{prob:robust_eta} in the above examples, we focus on the more general nominal problem
\begin{align*}
\max_{\bx \in \mathcal{X}} \frac{\bx^\intercal \bbu \bar{\btheta} + \mu}{\bx^\intercal \bbl \bar{\btheta} + \lambda}
\end{align*}
where $\mathcal{X} = \{\bx \in \{0,1\}^n: \bba \bx \le \bb \} \neq \emptyset$  satisfies $\conv(\mathcal{X}) = \{ \bx \in \R^n: \bba \bx \le \bb \}$,  the uncertainty set  is a bounded polyhedron $\mathcal{U} = \{\btheta \in \R^p: \bbd \btheta \ge \bg \} \neq \emptyset$, and the dimensions are $\bba \in \R^{m \times n}$, $\bb \in \R^m$, $\bbd \in \R^{q \times p}$, $\bg \in \R^q$,  $\bbu, \bbl \in \R^{n \times p}$, and $\mu,\lambda \in \R$. We observe that Examples~\ref{example:comb} and \ref{example:mnl} are special cases of this setting.\looseness=-1

In what follows, we show that the reduced robust optimization problem~\eqref{prob:robust_eta} corresponding to the above setting, denoted by  \looseness=-1
\begin{align}
\max_{\bx \in \mathcal{X}}\min_{\btheta \in \mathcal{U}_\eta}  \frac{\bx^\intercal \bbu {\btheta} + \mu}{\bx^\intercal \bbl {\btheta} + \lambda},\label{prob:fraction_eta}
\end{align}
can be reformulated as a mixed-integer linear program of polynomial size. 
The key idea behind the reformulation of \eqref{prob:fraction_eta} is applying strong duality twice:  once to construct a polynomial-size extended formulation of  the reduced uncertainty set $\mathcal{U}_\eta$, and once to dualize the inner problem of the reduced robust optimization problem~\eqref{prob:fraction_eta}. We first state our reformulation of \eqref{prob:fraction_eta} for the special case of Example~\ref{example:comb}, followed by the reformulation for the general case; the proofs of both of these propositions are found in Appendix~\ref{appx:messy_reforms}. 
\begin{proposition} \label{prop:reform:simple}
If $\mathcal{U}_\eta$ is nonempty, then 
    \begin{align*}
\max_{\bx \in \mathcal{X}} \min_{\btheta \in \mathcal{U}_\eta} \btheta^\intercal \bx = \left[ \begin{aligned}
&\underset{ \bx \in \{0,1\}^n, \bpsi \in \R^q,  \sigma \in \R}{\textnormal{maximize}} &&   \bg^\intercal \bpsi - \eta \sigma\\
&\textnormal{subject to}&&     \bba \bx \le \bb\\
 &&&     \bba \left( \bbd^\intercal \bpsi - \bx \right) \le \bb \sigma\\
 &&&  \bpsi \ge \bzero,\sigma \ge 0
    \end{aligned}\right] 
    \end{align*}
\end{proposition}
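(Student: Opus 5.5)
The plan is to apply linear programming duality twice: first to rewrite the constraint $\max_{\bx' \in \mathcal{X}} \btheta^\intercal \bx' \le \eta$ defining $\mathcal{U}_\eta$ as a polynomial-size system of linear constraints, and then to dualize the resulting inner linear program $\min_{\btheta \in \mathcal{U}_\eta} \btheta^\intercal \bx$ for each fixed $\bx \in \mathcal{X}$.

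\emph{Step 1 (extended formulation of $\mathcal{U}_\eta$).} A linear objective attains the same maximum over $\mathcal{X}$ as over $\conv(\mathcal{X})$, and by hypothesis $\conv(\mathcal{X}) = \{\bx: \bba\bx \le \bb\}$. Hence for every $\btheta$,
\begin{align*}
\max_{\bx' \in \mathcal{X}} \btheta^\intercal \bx' = \max\{\btheta^\intercal \bx': \bba \bx' \le \bb\} = \min\{\bb^\intercal \by: \bba^\intercal \by = \btheta,\ \by \ge \bzero\}.
\end{align*}
The polytope $\{\bba\bx'\le \bb\}$ is nonempty because $\mathcal{X} \neq \emptyset$. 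It is bounded because the constraints $\bzero \le \bx \le \bone$ are embedded in $\bba \bx \le \bb$. So the primal is feasible and bounded, strong duality holds, and the dual minimum is attained. Consequently
\begin{align*}
\mathcal{U}_\eta = \{\btheta: \bbd\btheta \ge \bg,\ \exists\, \by \ge \bzero \text{ with } \bba^\intercal \by = \btheta,\ \bb^\intercal \by \le \eta\}.
\end{align*}
I expect this step to be the main point needing care. The key is to check that the dual minimum is attained, so that the constraint "$\le \eta$" is exactly captured by the existence of some $\by$, including at boundary values of $\eta$.

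\emph{Step 2 (dualize the inner problem).} Fix $\bx \in \mathcal{X}$. The inner problem becomes the linear program
\begin{align*}
\min_{\btheta,\by}\ \bx^\intercal \btheta \quad \text{s.t.} \quad \bbd \btheta \ge \bg,\quad \bba^\intercal \by - \btheta = \bzero,\quad -\bb^\intercal \by \ge -\eta,\quad \by \ge \bzero.
\end{align*}
Its feasible set is nonempty because $\mathcal{U}_\eta \neq \emptyset$. Its $\btheta$-projection lies in the bounded polyhedron $\mathcal{U}$, so the objective is bounded below, and strong duality applies. I assign multipliers $\bpsi \ge \bzero$ to $\bbd\btheta \ge \bg$, a free vector $\bz$ to the equality constraints, and $\sigma \ge 0$ to $-\bb^\intercal \by \ge -\eta$. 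The dual is
\begin{align*}
\max\ \bg^\intercal\bpsi - \eta\sigma \quad \text{s.t.} \quad \bbd^\intercal \bpsi - \bz = \bx \ (\text{column of free } \btheta),\quad \bba \bz - \bb\sigma \le \bzero \ (\text{column of } \by \ge \bzero).
\end{align*}
Eliminating $\bz = \bbd^\intercal\bpsi - \bx$ gives the constraint $\bba(\bbd^\intercal\bpsi - \bx) \le \bb\sigma$ together with $\bpsi \ge \bzero$ and $\sigma \ge 0$.

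\emph{Step 3 (combine).} Substituting this dual value for the inner minimum and merging the outer maximization over $\bx \in \mathcal{X} = \{\bx \in \{0,1\}^n: \bba\bx \le \bb\}$ with the inner maximization over $(\bpsi,\sigma)$ yields the stated mixed-integer linear program. Attainment of the joint maximum follows because $\mathcal{X}$ is finite and, for each $\bx$, the dual optimum is attained.
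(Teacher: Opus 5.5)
Your proposal is correct and follows the paper's argument. The paper proves a general lemma for the fractional objective using the same two applications of LP strong duality: first an extended formulation of $\mathcal{U}_\eta$, then dualizing the inner minimization. It specializes that lemma to $\bbl=\bzero$, $\bbu$ the identity, $\lambda=1$, $\mu=0$, and eliminates the free dual variable exactly as you do. Your explicit checks of feasibility, boundedness and dual attainment fill in details the paper leaves implicit. Note that boundedness of $\{\bx:\bba\bx\le\bb\}$ also follows directly from its being the convex hull of the finite set $\mathcal{X}$, so you do not need the embedded box constraints.
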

\begin{proposition} \label{prop:reform:general}
If $\mathcal{U}_\eta$ is nonempty, $\bx^\intercal \bbl {\btheta} + \lambda > 0$ for all $\bx \in \mathcal{X}$ and $\btheta \in \mathcal{U}$, and the optimal value of \eqref{prob:fraction_eta} lies in $[\ubar{t},\bar{t}]$ for $0 \le \ubar{t} \le \bar{t}$, then\looseness=-1
    \begin{align*}
\max_{\bx \in \mathcal{X}}\min_{\btheta \in \mathcal{U}_\eta}  \frac{\bx^\intercal \bbu {\btheta} + \mu}{\bx^\intercal \bbl {\btheta} + \lambda} = 
\left[ \begin{aligned}
&\underset{ \substack{\bx \in \{0,1\}^n, t \in \R, \bpsi \in \R^q,\\ \by \in \R^n, \sigma \in \R, \bz \in \R^n}}{\textnormal{maximize}} &&    t\\
&\textnormal{subject to}&&
t \lambda - \mu \le \bg^\intercal \bpsi - (\eta \lambda - \mu) \sigma\\
    &&&  \bbd^\intercal \bpsi - \left( \bbu - \eta \bbl \right)^\intercal \by
        = \bbu^\intercal \bx  -  \bbl^\intercal \bz  \\
   &&& \ubar{t} \le t \le \bar{t}\\
    &&&\ubar{t} x_i \le z_i \le \bar{t}x_i && \forall i \in \{1,\ldots,n\}\\
  &&&  t - \bar{t} (1-x_i) \le z_i   \le t - \ubar{t} (1-x_i) && \forall i \in \{1,\ldots,n\}    \\
      &&& \bba \bx \le \bb\\
 &&&     \bba \by \le \bb \sigma\\
 &&&  \bpsi \ge \bzero,\sigma \ge 0
    \end{aligned}\right].
    \end{align*}
\end{proposition}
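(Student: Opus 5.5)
The plan is to reduce the max-min fractional problem to a single maximization over $(\bx,t)$ of a feasibility condition, and then remove the inner minimization by LP duality. Since $\bx^\intercal \bbl \btheta + \lambda > 0$ on $\mathcal{X}\times\mathcal{U}$, for fixed $\bx \in \mathcal{X}$ and $t \in [\ubar{t},\bar{t}]$ we have $\min_{\btheta \in \mathcal{U}_\eta} \frac{\bx^\intercal \bbu \btheta + \mu}{\bx^\intercal \bbl \btheta + \lambda} \ge t$ if and only if
\begin{align*}
\min_{\btheta \in \mathcal{U}_\eta} \left\{ \bx^\intercal (\bbu - t\bbl) \btheta \right\} \ge t\lambda - \mu .
\end{align*}
So \eqref{prob:fraction_eta} equals the maximum of $t$ over $\bx \in \mathcal{X}$ and $t \in [\ubar{t},\bar{t}]$ subject to this inequality. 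The restriction to $[\ubar{t},\bar{t}]$ is harmless by the hypothesis on the optimal value.

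\textbf{Step 1: polyhedral description of $\mathcal{U}_\eta$.} The same positivity lets us rewrite the defining constraint of $\mathcal{U}_\eta$ as
\begin{align*}
\max_{\bx' \in \mathcal{X}} \bx'^\intercal (\bbu - \eta \bbl)\btheta \le \eta\lambda - \mu .
\end{align*}
Because $\conv(\mathcal{X}) = \{\bx' : \bba \bx' \le \bb\}$, the maximum can be taken over this polytope. It is nonempty and bounded, since the box constraints are embedded in $\bba\bx \le \bb$. LP duality then replaces the maximum by $\min\{\bb^\intercal \bm{w} : \bba^\intercal \bm{w} = (\bbu - \eta\bbl)\btheta,\ \bm{w} \ge \bzero\}$. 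This gives the extended formulation
\begin{align*}
\mathcal{U}_\eta = \left\{\btheta : \bbd\btheta \ge \bg,\ \exists\, \bm{w} \ge \bzero \text{ with } \bba^\intercal \bm{w} = (\bbu-\eta\bbl)\btheta,\ \bb^\intercal \bm{w} \le \eta\lambda - \mu\right\}.
\end{align*}

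\textbf{Step 2: dualize the inner problem.} For fixed $(\bx,t)$, the inner problem is now a linear program in $(\btheta,\bm{w})$. It is feasible because $\mathcal{U}_\eta \neq \emptyset$, and its optimal value is finite because $\mathcal{U}$ is bounded. Strong duality therefore equates it with a dual maximization. The dual variables are $\bpsi \ge \bzero$ for $\bbd\btheta \ge \bg$, $\sigma \ge 0$ for the budget constraint $\bb^\intercal\bm{w} \le \eta\lambda-\mu$, and a vector $\by$ for the equality $\bba^\intercal\bm{w} = (\bbu-\eta\bbl)\btheta$. After fixing the sign convention on $\by$, the dual reads: maximize $\bg^\intercal\bpsi - (\eta\lambda-\mu)\sigma$ subject to
\begin{align*}
\bbd^\intercal\bpsi - (\bbu-\eta\bbl)^\intercal\by = (\bbu - t\bbl)^\intercal\bx, \qquad \bba\by \le \bb\sigma,
\end{align*}
where the second constraint is dual to $\bm{w} \ge \bzero$. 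The requirement "inner optimum $\ge t\lambda-\mu$" then becomes the existence of a feasible dual point with objective at least $t\lambda - \mu$. That is exactly the first two constraints of the stated MILP, once $t\bx$ is replaced by $\bz$.

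\textbf{Step 3: linearize the bilinear term.} The only nonlinearity left is $t\bbl^\intercal\bx$. I set $\bz = t\bx$ and use the McCormick inequalities for the bounds $t \in [\ubar{t},\bar{t}]$ and $x_i \in \{0,1\}$. These are exact for binary $x_i$: if $x_i = 0$ they force $z_i = 0$, and if $x_i = 1$ they force $z_i = t$. Combining Steps 1 to 3 and maximizing over $(\bx, t, \bpsi, \by, \sigma, \bz)$ yields the stated formulation.

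\textbf{Main obstacle.} I expect the delicate part to be the sign bookkeeping in the two dualizations, so that the equality constraint appears with $-(\bbu - \eta\bbl)^\intercal \by$ and the constraint dual to $\bm{w}\ge \bzero$ appears as $\bba\by \le \bb\sigma$, as in the statement. I would first check this against Proposition~\ref{prop:reform:simple} (take $\bbl = \bzero$, $\lambda = 1$, $\mu = 0$, $\bbu = \bm{I}$) to pin down the conventions. I would also verify that the strong-duality hypotheses (feasibility and boundedness of the inner LP) hold uniformly in $(\bx,t)$, which is where nonemptiness of $\mathcal{U}_\eta$ is used.
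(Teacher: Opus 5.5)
Your proposal is correct and follows the paper's proof essentially step for step. The paper's Lemma~\ref{lem:reformulation} performs the same two LP dualizations: first an extended formulation of $\mathcal{U}_\eta$ via $\conv(\mathcal{X})$, then the dual of the inner minimization, with the bilinear term written as $\bz = t\bx$. The proposition then replaces $\bz = t\bx$ by the McCormick envelopes, which are exact for binary $\bx$. Your sign conventions do work out as stated, and your explicit checks fill in details the paper leaves implicit: feasibility and boundedness of the inner LP for strong duality, and why restricting $t$ to $[\ubar{t},\bar{t}]$ does not change the optimal value.
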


The mixed-integer programming formulation from Proposition~\ref{prop:reform:simple}  is the reformulation of \eqref{prob:fraction_eta} for the special case of $n = p$, $\bbl = \bzero$, $\bbu$ is the identity matrix, $\lambda = 1$, and $\mu = 0$. As such, the formulation from Proposition~\ref{prop:reform:simple} corresponds to Example~\ref{example:comb}, and this formulation is used in the numerical experiments from  \S\ref{sec:motivatingexample_shortest_path}.\footnote{The numerical experiments in \S\ref{sec:motivatingexample_shortest_path} focus on the min-max formulation of the problem, which can be obtained from Proposition~\ref{prop:reform:simple} by negating the objective function. } The mixed-integer programming formulation from Proposition~\ref{prop:reform:general} is general and requires that we have a known nonnegative lower bound $\ubar{t}$ and upper bound $\bar{t}$ on the optimal value of \eqref{prob:fraction_eta}. In the context of Example~\ref{example:mnl}, the lower bound can be chosen as $\$0$ or as the worst-case expected revenue of a heuristic assortment, and the upper bound can be chosen as the revenue of the most expensive product.\looseness=-1

We conclude \S\ref{sec:algorithm:reform} by providing an instance of Example~\ref{example:mnl} in which the value of human expertise is strictly positive. The example is useful for two reasons. First, it shows that the exact reformulation from Proposition~\ref{prop:reform:general} is not vacuous, in the sense that the reduced robust optimization problem~\eqref{prob:robust_eta} can differ from the robust optimization problem~\eqref{prob:robust} and is thus worthwhile to solve.   Second, it demonstrates that the value of human expertise can sometimes be strictly positive in settings where the conditions of Corollary~\ref{cor:main} are not satisfied, although this is not the case in general as shown by  Example~\ref{example:nonconvex} in \S\ref{sec:maintheorem}. %
\begin{example}
Let $n = 2$, $k = 1$, $r_1 = r_2 = 1$, and $\mathcal{U} = \{\btheta \in \R^2: \sum_{i=1}^2 \theta_i = 3, \btheta \ge \bone \}$. Then
\begin{align*}
\max_{S: |S| \le 1} \min_{\btheta \in \mathcal{U}} \sum_{i \in S} \frac{r_i {\theta}_i}{1 + \sum_{j \in S} {\theta}_j} &= \min_{\btheta  \ge \bone:\bone^\intercal \btheta = 3} \frac{\theta_1}{1 + \theta_1} = \frac{1}{2}, \text{ and }\\
 \min_{\btheta \in \mathcal{U}} \max_{S: |S| \le 1}  \sum_{i \in S} \frac{r_i {\theta}_i}{1 + \sum_{j \in S} {\theta}_j} &= \frac{\sfrac{3}{2}}{1 + \sfrac{3}{2}} = \frac{3}{5}.
\end{align*}
We observe that if $\eta = \sfrac{3}{5}$, then
\begin{align*}
\max_{S: |S| \le 1} \sum_{i \in S} \frac{r_i \theta_i}{1 + \sum_{j \in S} \theta_j} \le \eta &\implies  \frac{\theta_i}{1 + \theta_i} \le  \frac{3}{5} \quad  \forall i \in \{1,2\}\\
 &\implies \theta_i \le \frac{3}{2} \quad  \forall i \in \{1,2\},
\end{align*}
and so it follows from the definition of $\mathcal{U}$ that $\mathcal{U}_{\sfrac{3}{5}} = \{(\sfrac{3}{2}, \sfrac{3}{2})\}$. We thus conclude for the case of $\eta = \sfrac{3}{5}$ that\looseness=-1
\begin{align*}
\max_{S: |S| \le 1} \min_{\btheta \in \mathcal{U}_\eta} \sum_{i \in S} \frac{r_i {\theta}_i}{1 + \sum_{j \in S} {\theta}_j}  = \max_{S: |S| \le 1}  \sum_{i \in S}  \frac{\sfrac{3}{2}}{1 + \sfrac{3}{2}}  = \frac{3}{5},
\end{align*}
and hence the value of human expertise is $\Delta = \sfrac{3}{5} - \sfrac{1}{2} = \sfrac{1}{10}$. 
\qed
\end{example}

\subsubsection{Cutting Plane Method} \label{sec:algorithm:cutting}
In applications where the reformulation techniques from \S\ref{sec:algorithm:reform} do not apply, we propose a two-level cutting plane method for solving the reduced robust optimization problem~\eqref{prob:robust_eta}. Let Assumption~\ref{ass:main} hold. The two-level cutting plane method is presented in Algorithm~\ref{alg:cutting}. 

In greater detail,  the two-level cutting plane method in Algorithm~\ref{alg:cutting} is motivated by applications where the policy space $\mathcal{X}$ is finite but has many elements and the constraint $\max_{\bx \in \mathcal{X}} f(\bx,\btheta) \le \eta$ in the reduced uncertainty set $\mathcal{U}_\eta$ does not have a compactly representable dual. Examples include traveling salesman problems or assortment optimization problems of the form studied in \S\ref{sec:assortment:example1} with large numbers of products.  Algorithm~\ref{alg:cutting}  addresses the fact that $\mathcal{U}_\eta$ cannot be represented compactly by using a cutting plane method to approximate the reduced uncertainty set; this corresponds to the loop within Step 2. Indeed, we observe that Step 2 in Algorithm~\ref{alg:cutting} begins with an $\bx^\star \in \mathcal{X}$ and outputs an optimal solution 
$ \btheta^\star \in \argmin_{\btheta \in \mathcal{U}_\eta} f(\bx^\star, \btheta)$. The outer cutting plane method solves $\max_{\bx \in \mathcal{X}} \min_{\btheta \in \mathcal{U}_\eta} f(\bx,\btheta)$ by solving $\max_{\bx \in \mathcal{X}} \min_{\btheta \in \tilde{\mathcal{U}}} f(\bx,\btheta)$ (Step 1) and adding parameters to $\tilde{\mathcal{U}}$ (Step 3) until convergence. The initial $\btheta \in \mathcal{U}_\eta$ in Step 0 can be found, for example, by running Step 2 once before the method begins with any initial $\bx \in \mathcal{X}$.\looseness=-1

The inner cutting plane method (Step 2 in Algorithm~\ref{alg:cutting}) maintains its set of cuts $\tilde{\mathcal{X}}$ across iterations of the outer cutting plane method (Steps 1-3 in Algorithm~\ref{alg:cutting}). This is intended to warm start the problem of solving $\min_{\btheta \in \mathcal{U}_\eta} f(\bx^\star,\btheta)$ in each iteration of the outer cutting plane method. For example, suppose one is considering a reduced robust optimization problem~$\max_{\bx \in  \mathcal{X} \subseteq \{0,1\}^n} \min_{\btheta \in \mathcal{U}_\eta} \btheta^\intercal \bx$ where the set $\mathcal{X}$ is the set of feasible tours in a traveling salesman problem and $\mathcal{U}$ is a compactly representable polytope. In this case, the inner cutting plane method (Step 2) requires iteratively solving the optimization problem $\max_{\bx \in \mathcal{X} \subseteq \{0,1\}^n} f(\bx,\btheta)$, which may be computationally expensive. By maintaining the set $\tilde{\mathcal{X}}$ of cuts on the reduced uncertainty set $\mathcal{U}_\eta$ across iterations, the problem $\max_{\bx \in \mathcal{X} \subseteq \{0,1\}^n} f(\bx,\btheta)$ may be solved fewer times in total across the iterations of the outer cutting plane method (Steps 1-3).

\begin{algorithm}[t] 
\begin{center}
\fbox{\begin{minipage}{\linewidth}{  \small
\singlespacing
\vspace{-1em}
\begin{center}
\textbf{\underline{Two-Level Cutting Plane Method}}
\end{center}
\begin{description}
\item[Step 0] Choose an initial $\btheta \in \mathcal{U}_\eta$ and let $\tilde{\mathcal{U}} \leftarrow \{\btheta\}$ and $\tilde{\mathcal{X}} \leftarrow \emptyset$. %
\item[Step 1] Find an optimal solution $(\bx^\star,t^\star)$ for the optimization problem 
\begin{align*}
\begin{aligned}
&\underset{\bx \in \mathcal{X}, t \in \R}{\text{maximize}} &&t \\
&\text{subject to}&& t \le f(\bx,\btheta) \quad \forall \btheta \in \tilde{\mathcal{U}} %
\end{aligned}
\end{align*}
The existence of an optimal solution follows from Assumption~\ref{ass:main} and the fact that $\tilde{\mathcal{U}}$ is a finite set.
\item[Step 2]  Do the following steps. 
\begin{description}
\item[Step 2a] Find an optimal solution $\btheta^\star$ for the optimization problem
\begin{align*}
\begin{aligned}
&\underset{\btheta \in \mathcal{U}}{\text{minimize}} &&f(\bx^\star,\btheta) \\
&\text{subject to}&& f(\bx,\btheta) \le \eta \quad \forall \bx \in \tilde{\mathcal{X}} %
\end{aligned}
\end{align*}
The existence of an optimal solution follows from Assumption~\ref{ass:main} and the fact that $\tilde{\mathcal{X}}$ is a finite set.\looseness=-1
\item[Step 2b] Find an optimal solution $\hat{\bx}$ for the optimization problem
\begin{align*}
\hat{v} = \max_{\bx \in \mathcal{X}} f(\bx,\btheta^\star)
\end{align*}
\item[Step 2c] If $\hat{v} > \eta$, then  let $\tilde{\mathcal{X}} \leftarrow \tilde{\mathcal{X}} \cup \{\hat{\bx} \}$ and return to Step 2a. If $\hat{v} \le \eta$, then proceed with $\btheta^\star$ to Step 3.\looseness=-1

\end{description}
\item[Step 3] If $t^\star > f(\bx^\star,\btheta^\star)$, then let $\tilde{\mathcal{U}} \leftarrow \tilde{\mathcal{U}} \cup \{\btheta^\star\}$ and return to Step 1. If $t^\star \le f(\bx^\star,\btheta^\star)$, then $\bx^\star$ is an optimal solution for the reduced robust optimization problem~\eqref{prob:robust_eta} 
\end{description}
}
\end{minipage}}
\end{center}
\caption{The cutting plane method from \S\ref{sec:algorithm:cutting} for the reduced robust optimization problem~\eqref{prob:robust_eta}.}\label{alg:cutting}
\end{algorithm}

\subsection{Alternative Approaches to Generating Policies } \label{appx:disappointment}

If none of the nominal curves~\eqref{line:wc_eta_set} in the menu of policies obtained by  the approach from \S\ref{sec:managerialexpertise:generating_policies}  are deemed satisfactory to a decision maker, then we suggest two alternative approaches for generating  policies. 

 The first alternative approach, which is based on a modeling technique from the literature on globalized robust optimization~\citep{ben2006extending}, is motivated by settings where the decision maker has a  concrete guess of an upper bound $\eta$ on the optimal value of the nominal problem~\eqref{prob:true} but wants to control the worst-case performance of the policy if their guess is incorrect. This approach obtains a policy by solving 
\begin{align}
\max_{\bx \in \mathcal{X}} \min_{\param \in \mathcal{U}} \left \{f(\bx,\param) + \lambda \max \left \{ \max_{\by \in \mathcal{X}} f(\by,\param) - \eta, 0 \right \} \right \}\label{prob:robust_augmented}
\end{align}
where  $\lambda \in [0,\infty)$ is a parameter selected by a decision maker that controls the disappointment from selecting an $\eta$ that is not an upper bound on the optimal value of \eqref{prob:true}.

To make sense of the above  alternative approach to obtaining a policy, let us make some observations. First, we observe that the optimal value of \eqref{prob:robust_augmented} is nondecreasing in $\lambda$. In the extreme case where $\lambda = 0$, we observe that \eqref{prob:robust_augmented} equals the standard robust optimization problem~\eqref{prob:robust}, and it is straightforward to see that \eqref{prob:robust_augmented}  simplifies to the reduced robust optimization problem~\eqref{prob:robust_eta} in the case where $\lambda \to \infty$. In the other cases of $\lambda$, the interpretation of $\lambda$ follows from the following Proposition~\ref{prop:globalized_interpretation}, which specifies the relationship between the reduced uncertainty set $\mathcal{U}_\eta$ and \eqref{prob:robust_augmented}. 
\begin{proposition} \label{prop:globalized_interpretation}
If $\bx^*$ is an optimal solution for \eqref{prob:robust_augmented} and $v(\eta,\lambda)$ is the optimal value of \eqref{prob:robust_augmented}, then\looseness=-1
\begin{alignat*}{2}
f(\bx^*,\param) &\ge v(\eta,\lambda) &&\forall \param \in \mathcal{U}_\eta,\\
f(\bx^*,\param) &\ge v(\eta,\lambda) - \lambda \left ( \max_{\bx \in \mathcal{X}} f(\bx,\param) - \eta \right ) &\quad &\forall \param \in \mathcal{U} \setminus \mathcal{U}_\eta.
\end{alignat*}
\end{proposition}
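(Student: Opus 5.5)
The plan is to read off both inequalities directly from the definition of $v(\eta,\lambda)$ as the optimal value of the max-min problem \eqref{prob:robust_augmented}. Since $\bx^*$ is optimal for \eqref{prob:robust_augmented}, and the optimum of the outer problem is attained, we have
\begin{align*}
v(\eta,\lambda) = \min_{\param \in \mathcal{U}} \left\{ f(\bx^*,\param) + \lambda \max\left\{ \max_{\by \in \mathcal{X}} f(\by,\param) - \eta, 0\right\}\right\}.
\end{align*}
Consequently, for every $\param \in \mathcal{U}$ we get the pointwise bound
\begin{align*}
f(\bx^*,\param) \ge v(\eta,\lambda) - \lambda \max\left\{ \max_{\by \in \mathcal{X}} f(\by,\param) - \eta, 0\right\}.
\end{align*}
There is a minor technical point to check here. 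If the inner minimum in \eqref{prob:robust_augmented} is not attained, one replaces $\min$ by $\inf$. Attainment is not actually needed, because the infimum is still a lower bound for each $\param$.

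The next step splits $\mathcal{U}$ into $\mathcal{U}_\eta$ and $\mathcal{U}\setminus\mathcal{U}_\eta$, using the definition $\mathcal{U}_\eta = \{\param \in \mathcal{U}: \max_{\bx' \in \mathcal{X}} f(\bx',\param) \le \eta\}$.

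On $\mathcal{U}_\eta$ we have $\max_{\by} f(\by,\param) - \eta \le 0$. The penalty term $\max\{\cdot,0\}$ is therefore zero, and the pointwise bound reduces to $f(\bx^*,\param) \ge v(\eta,\lambda)$.

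On $\mathcal{U}\setminus\mathcal{U}_\eta$ we have $\max_{\by} f(\by,\param) - \eta > 0$. The positive part then equals $\max_{\bx \in \mathcal{X}} f(\bx,\param) - \eta$, which gives the second inequality after renaming the dummy variable. Here $\lambda \ge 0$ is only used implicitly, in that the expression is taken exactly as written.

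I do not expect a genuine obstacle. The only care needed is in justifying the pointwise bound from the optimality of $\bx^*$, since well-definedness of the objective relies on the boundedness of $f$ from Assumption~\ref{ass:main}.
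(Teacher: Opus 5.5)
Your proposal is correct and follows the paper's proof: optimality of $\bx^*$ gives $f(\bx^*,\param) + \lambda \max\{\max_{\by \in \mathcal{X}} f(\by,\param) - \eta, 0\} \ge v(\eta,\lambda)$ for every $\param \in \mathcal{U}$, and rearranging on $\mathcal{U}_\eta$ and on $\mathcal{U} \setminus \mathcal{U}_\eta$ via the definition of $\mathcal{U}_\eta$ yields both inequalities. As a minor aside, this rearrangement does not use $\lambda \ge 0$ at all, so it holds for every real $\lambda$.
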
 
\begin{proof}
If $\bx^*$ is an optimal solution for \eqref{prob:robust_augmented}, then for all $\param \in \mathcal{U}$, we have
\begin{align*}
f(\bx^*,\param) + \lambda \max \left \{ \max_{\by \in \mathcal{X}} f(\by,\param) - \eta, 0 \right \} \ge v(\eta,\lambda).
\end{align*}
Rearranging the above inequality and applying the definition of $\mathcal{U}_\eta$ yields the desired result.
\end{proof}
The above proposition shows that larger values of $\lambda$ imply that the performance of optimal solutions  for \eqref{prob:robust_augmented} is potentially less conservative, but the performance of those policies is degraded if $\eta$ is not an upper bound on \eqref{prob:true}. As such, we can  solve~\eqref{prob:robust_augmented} with varying choices of $\lambda \in [0,\infty)$ to obtain tradeoffs between performance guarantees and confidence about the accuracy of the estimate $\eta$. Similarly to the reduced robust optimization problem~\eqref{prob:robust_eta}, the inner problem of \eqref{prob:robust_augmented} is a convex optimization problem for a fixed policy under the assumptions from \S\ref{sec:mainresults}, as shown by the following proposition.
\begin{proposition}
If Assumptions~\ref{ass:main} and \ref{ass:convex} hold and $\lambda \in [0,\infty)$, then for each $\bx \in \mathcal{X}$, the inner problem $\min_{\param \in \mathcal{U}} \left \{f(\bx,\param) + \lambda \max \left \{ \max_{\by \in \mathcal{X}} f(\by,\param) - \eta, 0 \right \} \right \}$ is a convex optimization problem. 
\end{proposition}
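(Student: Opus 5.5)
The plan is to show that, for each fixed $\bx \in \mathcal{X}$, the objective $g_\bx(\param) \coloneqq f(\bx,\param) + \lambda \max\{ \max_{\by \in \mathcal{X}} f(\by,\param) - \eta, 0\}$ is a convex function of $\param$. The inner problem minimizes this function over $\mathcal{U}$, which is convex by Assumption~\ref{ass:convex} and compact and nonempty by Assumption~\ref{ass:main}. I will build convexity of $g_\bx$ from the standard closure rules for convex functions.

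\textbf{Step 1.} The first term $\param \mapsto f(\bx,\param)$ is convex by Assumption~\ref{ass:convex}.

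\textbf{Step 2.} Let $h(\param) \coloneqq \max_{\by \in \mathcal{X}} f(\by,\param)$.
\begin{itemize}
\item This is a pointwise supremum of the family $\{\param \mapsto f(\by,\param)\}_{\by \in \mathcal{X}}$, and each member is convex by Assumption~\ref{ass:convex}, since that assumption holds for every element of $\mathcal{X}$.
\item A pointwise supremum of convex functions is convex, by the usual epigraph-intersection argument.
\item $h$ is real-valued because $f$ is bounded, so there are no extended-value issues.
\item The maximum is attained by Assumption~\ref{ass:main}, though only finiteness matters here.
\end{itemize}
It follows that $h(\param) - \eta$ is convex, being a convex function plus a constant.

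\textbf{Step 3.} The map $\param \mapsto \max\{h(\param)-\eta, 0\}$ is the maximum of two convex functions, so it is convex. Equivalently, it is the composition of the nondecreasing convex function $t \mapsto \max\{t,0\}$ with a convex function.

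\textbf{Step 4.} Multiplying by $\lambda \ge 0$ preserves convexity, and adding the convex first term preserves it as well. Hence $g_\bx$ is convex on $\mathcal{U}$.

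\textbf{Conclusion.} Minimizing a convex real-valued function over a nonempty compact convex set is a convex optimization problem. Attainment of the minimum is not strictly needed for the claim, and would in any case follow from lower semicontinuity: $f(\bx,\cdot)$ is lsc, a supremum of lsc functions is lsc, and maxima and nonnegative combinations of lsc functions are lsc.

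The only step needing any care is Step 2, where the supremum runs over a possibly infinite, non-convex set $\mathcal{X}$. The epigraph argument does not use any structure of $\mathcal{X}$, so I expect no real obstacle; the hypothesis $\lambda \ge 0$ is essential in Step 4.
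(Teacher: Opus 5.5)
Your proof is correct and follows essentially the same route as the paper: convexity of $\mathcal{U}$ and of $\param \mapsto f(\bx,\param)$, convexity of $\max_{\by \in \mathcal{X}} f(\by,\cdot)$ as a pointwise maximum of convex functions, and preservation of convexity under taking the maximum with $0$, scaling by $\lambda \ge 0$, and summation. You simply make explicit the closure rules, and the finiteness of the supremum from the boundedness of $f$, that the paper's one-line argument leaves implicit.
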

\begin{proof}
The uncertainty set $\mathcal{U}$ is convex by Assumption~\ref{ass:convex}.  Given a fixed  policy $\bx \in \mathcal{X}$,  the objective function of the inner problem is the sum of a convex function $\btheta \mapsto f(\bx,\btheta)$ and the maximum of $0$ and a function of $\btheta \mapsto \lambda (\max_{\by \in \mathcal{X}} f(\by,\param) - \eta)$ that is convex since  $\lambda \in [0,\infty)$ and $\param \mapsto f(\by,\param)$ is convex for each $\by$. Therefore, the inner problem of \eqref{prob:robust_augmented} is a convex optimization problem. \looseness=-1
\end{proof}

Our second alternative approach is to add hard constraints into the reduced robust optimization problem~\eqref{prob:robust_eta} to restrict to policies that have acceptable worst-case performance guarantees under different possible upper bounds on the optimal value of \eqref{prob:true}. Given an $\eta_1 \ge \ubar{\eta}$ and an acceptable set $\mathcal{A} \equiv \{(\eta_2,\nu_2),\ldots,(\eta_m,\nu_m)\}\subseteq [\ubar{\eta},\infty) \times \R$ of pairs of upper bounds and worst-case performance guarantees, one can solve
\begin{equation} \label{prob:ilikeless}
\begin{aligned}
&\underset{\bx \in \mathcal{X}}{\text{maximize}}&& \min_{\param \in \mathcal{U}_{\eta_1}} f(\bx,\param) \\
&\text{subject to}&& \min_{\param \in \mathcal{U}_{\eta_i}} f(\bx,\param) \ge \nu_i && \forall i \in \{2,\ldots,m\}
\end{aligned}
\end{equation}
The problem~\eqref{prob:ilikeless} gives the option to have fine-grained control over the performance of a policy under different bounds $\eta_1,\ldots,\eta_m$. Similarly to the first alternative approach~\eqref{prob:robust_augmented}, a menu of policies can be obtained by solving~\eqref{prob:ilikeless} with varying choices of acceptable sets.

\section{Conclusion and Open Questions} \label{sec:conclusion}

In this work, we studied optimization applications with unknown parameters where the decision maker believes that the optimal value of the nominal problem is unlikely to be large.  We proposed nominal curves for translating such beliefs into performance guarantees while retaining guarantees when the belief is incorrect. We introduced a quantity called the \emph{value of human expertise} that captures the maximum improvement in performance guarantees from incorporating such beliefs, and our main result showed under mild assumptions that this quantity is equal to the gap between the optimal values of a min-max and max-min problem. We showed that this gap can be substantial in applications such as assortment optimization and shortest path problems, and  that improved performance guarantees can be obtained even under relatively loose beliefs about the nominal problem. We also proposed several approaches to generating menus of policies, as well as computational methods for finding policies that are pointwise optimal. 

There are many important directions for future work, including empirical testing, case studies, and connections to fields such as decision theory. %
One  question that could be interesting to investigate is whether the \emph{optimizer's curse} phenomenon \citep{smith2006optimizer,van2021data,gupta2024debiasing,xu2025winner,bastani2025beating} can be exploited to construct statistical upper bounds on the optimal value of the nominal problem that lead to improved performance guarantees. Another direction would be to study optimization with human expertise in specific application classes, such as mechanism design and stochastic programming with chance constraints. It would also be interesting to study the use of nominal curves in human-AI collaboration, such as in settings where agents are deployed to solve high-stakes operations management  problems. \looseness=-1

\bibliographystyle{plainnat}
\bibliography{bib}
\clearpage
\appendix


\section{Review of Topology} \label{app:topology}
Our analysis in \S\S\ref{sec:managerialexpertise} and \ref{sec:mainresults} utilizes basic facts about topology, which we review here. 
\subsection{Compact sets and semicontinuity} \label{app:topology_basics}

Recall from \S\ref{sec:managerialexpertise:setting}  that $\mathcal{X}$ and $\mathcal{U}$ are assumed throughout the paper to be compact nonempty sets (this is stated formally as Assumption~\ref{ass:main} in \S\ref{sec:managerialexpertise:computation}). It follows from the definition of lower and upper semicontinuity \cite[Definition 2.8]{rudin1987real}   that if $\psi: \mathcal{U} \to \R$ is lower semicontinuous and $\phi: \mathcal{X} \to \R$ is upper semicontinuous, then the sets\looseness=-1
\begin{align*}
\{\param \in \mathcal{U}: \psi(\param) \le \eta\} \text{ and }\{\bx \in \mathcal{X}: \phi(\bx) \ge \alpha\}
\end{align*} are closed for all $\eta, \alpha \in \R$. Because the above sets are closed subsets of compact sets $\mathcal{U}$ and $\mathcal{X}$, we also have that the above sets are compact sets  for all $\eta,\alpha \in \R$ \cite[Theorem 2.4]{rudin1987real}. 

We will utilize several additional properties of lower and upper semicontinuous functions. First, we will use the fact that if $(\psi_i)$ are lower semicontinuous functions, then $\max_i \psi_i$ is a lower semicontinuous function (similarly, $\min_i \phi_i$ is upper semicontinuous if $(\phi_i)$ are upper semicontinuous functions). This, for example, implies that the reduced uncertainty set from \S\ref{sec:managerialexpertise}, defined as 
\begin{align*}
\mathcal{U}_\eta &\triangleq \left \{\param \in \mathcal{U}: \max_{\bx \in \mathcal{X}} f(\bx,\param) \le \eta \right \},
\end{align*} 
is a compact set for all $\eta \in \R$. Second, we will utilize semicontinuity in the context of the Weierstrass extreme value theorem, which says that the maximum of a bounded upper semicontinuous function over a compact set is attained, and similarly for the minimum of a bounded lower semicontinuous function over a compact set. This implies that the inner and outer problems in \eqref{prob:robust_eta}, \eqref{prob:robust} and \eqref{prob:robust_exchange}  attain their optimums.

Finally,  for a general topological space, a set is compact if every open cover of the set has a finite subcover \cite[Definition 2.3]{rudin1987real}. From this definition of compact sets, we obtain the following proposition, which we will use in Appendix~\ref{app:topology_application}.

\begin{proposition} \label{prop:compactintersection}
Let $\{\mathcal{X}_i\}_{i \in I}$ be a collection of closed sets that satisfy $\mathcal{X}_i \subseteq \mathcal{X}$ for all $i \in I$. If $\cap_{i \in I} \mathcal{X}_i = \emptyset$, then there exists a finite collection $\{i_1,\ldots,i_K\} \subseteq I$ that satisfies $\mathcal{X}_{i_1} \cap \cdots \cap \mathcal{X}_{i_K} = \emptyset$.
\end{proposition}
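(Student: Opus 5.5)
The plan is to argue by contraposition through complements, using the defining property of compactness (every open cover has a finite subcover), which the paper has just recalled. The setting is a compact set $\mathcal{X}$ and a family of closed subsets $\mathcal{X}_i \subseteq \mathcal{X}$. Here ``closed'' should be read relative to $\mathcal{X}$, which is automatic when the $\mathcal{X}_i$ are closed in the ambient space. For each $i \in I$, define the relatively open set $\mathcal{O}_i \coloneqq \mathcal{X} \setminus \mathcal{X}_i$.

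First, translate the empty-intersection hypothesis into a covering statement. By De Morgan's law,
\begin{align*}
\bigcup_{i \in I} \mathcal{O}_i = \mathcal{X} \setminus \bigcap_{i \in I} \mathcal{X}_i = \mathcal{X}.
\end{align*}
So $\{\mathcal{O}_i\}_{i \in I}$ is an open cover of $\mathcal{X}$ in the subspace topology. If one prefers the ambient formulation of compactness, use instead the ambient open sets $\mathcal{O}_i' \coloneqq \mathcal{X}_i^c$, which are open because $\mathcal{X}_i$ is closed. Their union contains $\mathcal{X}$ for the same reason.

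Second, apply compactness of $\mathcal{X}$, which holds by Assumption~\ref{ass:main}. This gives finitely many indices $i_1,\ldots,i_K$ with $\mathcal{X} \subseteq \mathcal{O}_{i_1} \cup \cdots \cup \mathcal{O}_{i_K}$.

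Third, take complements back within $\mathcal{X}$ to get
\begin{align*}
\mathcal{X}_{i_1} \cap \cdots \cap \mathcal{X}_{i_K} \subseteq \mathcal{X} \setminus (\mathcal{O}_{i_1} \cup \cdots \cup \mathcal{O}_{i_K}) = \emptyset.
\end{align*}
This is exactly the claim.

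Two edge cases need attention. If $I = \emptyset$, the intersection is conventionally read as $\mathcal{X}$. It is then empty only if $\mathcal{X} = \emptyset$, which Assumption~\ref{ass:main} excludes, so we may take $I$ nonempty. If the finite subcover happens to be empty, then $\mathcal{X}$ would be empty, which is again excluded. In that situation one can simply take any single index, so $K \ge 1$ is guaranteed.

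The only real obstacle is bookkeeping about which topology ``closed'' and ``open'' refer to. I would handle it by noting that a set closed in the ambient space and contained in $\mathcal{X}$ is closed in $\mathcal{X}$. Compactness of $\mathcal{X}$ as a subspace is equivalent to compactness with respect to covers by ambient open sets, so either formulation goes through.
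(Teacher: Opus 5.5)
Your proof is correct and matches the paper's argument: take the complements $\mathcal{X}\setminus\mathcal{X}_i$, note that they form an open cover of $\mathcal{X}$, extract a finite subcover by compactness, and pass back to complements. Your remarks on relative versus ambient closedness and on the degenerate cases are accurate bookkeeping that the paper leaves implicit.
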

\begin{proof}Define  $\mathcal{Y}_i \triangleq \mathcal{X} \setminus \mathcal{X}_i$ for all $i \in I$. It follows from the fact that each $\mathcal{X}_i$  is  a closed set that each $\mathcal{Y}_i$ is an open set, and it follows from the fact that $\cap_{i \in I} \mathcal{X}_i = \emptyset$ that  $\bigcup_{i \in I} \mathcal{Y}_i = \mathcal{X}$. Because $(\mathcal{Y}_i: i \in I)$ is a collection of open sets that forms a cover of $\mathcal{X}$, and because $\mathcal{X}$ is a compact set, there must exist a finite collection  $\{i_1,\ldots,i_K\}$ that satisfies  $\mathcal{Y}_{i_1} \cup \cdots \cup \mathcal{Y}_{i_K} = \mathcal{X}$, which implies that $\mathcal{X}_{i_1} \cap \cdots \cap \mathcal{X}_{i_K} = \emptyset$.
\end{proof}

\subsection{Omitted details from the proof of Theorem~\ref{thm:fundamental}} \label{app:topology_application}
Let $\eta = \min_{\param \in \mathcal{U}} \max_{\bx \in \mathcal{X}} f(\bx,\param)$ and $\mathcal{U}^* = \argmin_{\param \in \mathcal{U}} \max_{\bx \in \mathcal{X}} f(\bx,\param) = \mathcal{U}_\eta$. For each $\param \in \mathcal{U}^*$ define
\begin{align}
    \mathcal{X}^\param &\triangleq \left \{ \bx \in \mathcal{X}: f(\bx,\param) \ge \eta \right \}\label{line:X_c_oneway}
\end{align}
It follows from the fact that $\bx \mapsto f(\bx,\param)$ is upper semicontinuous and from Appendix~\ref{app:topology} that $\mathcal{X}^\param$ is a closed set for all $\param \in \mathcal{U}^*$. Furthermore, it follows from the definition of $\eta$ that for all $\param \in \mathcal{U}^*$ and all $\bx \in \mathcal{X}$, we have
\begin{align}
f(\bx,\param) \le \max_{\hat{\bx} \in \mathcal{X}} f(\hat{\bx},\param) = \eta,\label{line:X_c_otherway}
\end{align} 
where the inequality follows from algebra and the equality follows from the fact that $\param \in \mathcal{U}^*$. Combining lines~\eqref{line:X_c_oneway} and \eqref{line:X_c_otherway}, we have shown for all $\param \in \mathcal{U}^*$ that
\begin{align*}
    \mathcal{X}^\param &= \left \{ \bx \in \mathcal{X}: f(\bx,\param)= \eta \right \}.
\end{align*}

It is supposed at the beginning of Case 2 in the proof of Theorem~\ref{thm:fundamental} from \S\ref{sec:maintheorem} that there does not exist an $\hat{\bx} \in \mathcal{X}$ that satisfies  $f(\hat{\bx},\param) = \eta$ for all $\param \in \mathcal{U}^*$. Using the above notation, this is equivalent to supposing that\looseness=-1
 \begin{align*}
    \bigcap_{\param \in \mathcal{U}^*} \mathcal{X}^\param = \emptyset.
 \end{align*}
 We observe that $\{\mathcal{X}^\param\}_{\param \in \mathcal{U}^*}$ is a collection of closed sets that satisfy $\mathcal{X}^\param \subseteq \mathcal{X}$ for all $\param \in \mathcal{U}^*$. Therefore, it follows from Proposition~\ref{prop:compactintersection} from Appendix~\ref{app:topology} that there exists a finite collection $\{\param_1,\ldots,\param_K \} \subseteq \mathcal{U}^*$ that satisfies
    \begin{align*}
    \bigcap_{k=1}^K \mathcal{X}^{\param_k}  = \emptyset,
    \end{align*}
    which completes the omitted details from the beginning of Case 2 in the proof of Theorem~\ref{thm:fundamental}.


\section{Proof of Proposition~\ref{prop:shape}} \label{appx:misc_eta_nominal}

Let $\ubar{\eta} = \min_{\btheta \in \mathcal{U}} \max_{\by \in \mathcal{X}} f(\by,\btheta)$.  The proof that $v_\bx(\eta)$ is nonincreasing follows from the fact that $\mathcal{U}_{\eta_1} \subseteq \mathcal{U}_{\eta_2}$ for all $\eta_1 \le \eta_2$.

 To prove that $v_\bx(\cdot)$ is convex, consider any $\eta_1 < \eta_2$ that satisfy $\eta_1, \eta_2 \in [\ubar{\eta},\infty)$ and $\lambda \in (0,1)$.  It follows from  Assumption~\ref{ass:convex} and Proposition~\ref{prop:eta} that the uncertainty set $\mathcal{U}_\eta$ is convex and nonempty for all $\eta \in [\ubar{\eta},\infty)$. Consider any optimal solutions $\btheta_i \in \argmin_{\btheta \in \mathcal{U}_{\eta_i}} f(\bx,\btheta)$ for $i \in \{1,2\}$, and define $\hat{\eta} =  \lambda \eta_1 + (1-\lambda) \eta_2$ and $\hat{\btheta} \coloneqq \lambda \btheta_1 + (1-\lambda) \btheta_2$. It follows from Assumption~\ref{ass:convex} that $\hat{\btheta} \in \mathcal{U}$. Moreover, we have 
\begin{align*}
\max_{\by \in \mathcal{X}} f(\by,\hat{\btheta}) \le \lambda \max_{\by \in \mathcal{X}} f(\by,\btheta_1) +  (1-\lambda) \max_{\by \in \mathcal{X}} f(\by,\btheta_2)  \le \lambda \eta_1 + (1-\lambda) \eta_2 = \hat{\eta}
\end{align*}
where the first inequality follows from Assumption~\ref{ass:convex} and the second follows from the fact that $\btheta_i \in \mathcal{U}_{\eta_i}$ for $i \in \{1,2\}$. We thus conclude that $\hat{\btheta} \in \mathcal{U}_{\hat{\eta}}$. Therefore, 
\begin{align*}
v_\bx(\hat{\eta}) \le f(\bx, \hat{\btheta}) \le \lambda f(\bx,\btheta_1) + (1-\lambda) f(\bx,\btheta_2)  \le \lambda v_\bx(\eta_1) + (1-\lambda) v_\bx(\eta_2)
\end{align*}
where the inequality follows from the fact that $\hat{\btheta} \in \mathcal{U}_{\hat{\eta}}$, the second inequality follows from Assumption~\ref{ass:convex}, and the third inequality follows from the definition of $\btheta_1,\btheta_2$. We have thus shown that $v_\bx(\cdot)$ is convex. 

We conclude by showing that $v_\bx(\cdot)$ is continuous on $[\ubar{\eta},\infty)$. It follows from the fact that $v_\bx(\cdot)$  is convex that $v_\bx(\cdot)$ is continuous on $(\ubar{\eta},\infty)$. To show that the continuity extends to $\eta = \ubar{\eta}$,  choose any sequence $\eta_k \downarrow \ubar{\eta}$ and let $\btheta_k \in \argmin_{\btheta \in \mathcal{U}_{\eta_k}} f(\bx,\btheta)$ for all $k$. It follows from compactness of $\mathcal{U}$ (Assumption~\ref{ass:main}) that the net $(\btheta_k)$ has a convergent subnet $(\btheta_{k_\alpha})_{\alpha \in A}$  with $\btheta_{k_\alpha} \to \hat{\btheta}$ for some $\hat{\btheta} \in \mathcal{U}$. It thus follows from the lower semicontinuity of  $\max_{\by \in \mathcal{X}} f(\by,\cdot)$ (Assumption~\ref{ass:main}) that %
\begin{align}
\max_{\by \in \mathcal{X}} f(\by,\hat{\btheta})  &\le \liminf_{\alpha} \max_{\by \in \mathcal{X}} f(\by,\btheta_{k_\alpha}) \le \lim_\alpha \eta_{k_\alpha} = \ubar{\eta}\label{line:liminf_1}
\end{align}
where the first inequality follows from lower semicontinuity, the second inequality follows from the fact that $\btheta_{k_\alpha} \in \mathcal{U}_{\eta_{k_\alpha}}$ for all $\alpha$, and the equality holds because every subnet of $(\eta_k)$ converges to $\ubar{\eta}$. 
Moreover, we have
\begin{align}
 f(\bx,\hat{\btheta}) & \le \liminf_{\alpha} f(\bx,\btheta_{k_\alpha}) = \lim_\alpha v_\bx(\eta_{k_\alpha})\label{line:liminf_2}
\end{align}
 where the first inequality follows from the fact that  $f(\bx,\cdot)$ is lower semicontinuous (Assumption~\ref{ass:main}), and the equality follows from the definition of $\btheta_{k_\alpha}$ and the fact that every subnet of $(v_\bx(\eta_k))$ converges to the same limit (which exists because $v_\bx(\cdot)$ is non-increasing, is bounded because of Assumption~\ref{ass:main}, and $\eta_k \downarrow \eta$). 
We thus have
\begin{align}
v_\bx(\ubar{\eta}) \le f(\bx,\hat{\btheta}) \le  \lim_{\alpha} v_\bx(\eta_{k_\alpha}) \label{line:liminf_3}
\end{align}
where the first inequality follows from the fact that $\hat{\btheta} \in \mathcal{U}_{\ubar{\eta}}$ as shown in \eqref{line:liminf_1} and the second inequality is \eqref{line:liminf_2}. Since we established previously that $v_\bx(\cdot)$ is nonincreasing, it follows from the fact that $f(\bx,\cdot)$ is bounded (Assumption~\ref{ass:main}) that $v_\bx(\ubar{\eta}) \ge \lim_{k \to \infty} v_\bx(\eta_{k}) $. Combining that inequality with \eqref{line:liminf_3}, we conclude that $v_\bx(\ubar{\eta}) = \lim_{k \to \infty} v_\bx(\eta_k) $, which completes the proof that $v_\bx(\cdot)$ is continuous on $[\ubar{\eta},\infty)$. 


\section{Omitted Proofs From \S\ref{sec:structure:budget}} \label{app:proofs_structure}
\begin{proof}[Proof of Theorem~\ref{thm:extreme}]
To show the first direction, suppose there exists a vector $\tilde{\bz}$ that is an optimal solution of $\max_{\bz \in \mathcal{Z}} \sum_{j \in \tilde{I}} (\hat{c}_j + d_j z_j)$ and satisfies $\text{supp}(\tilde{\bz}) \cap I^* \neq \emptyset$ for every optimal solution $I^*$ of the non-robust problem~\eqref{prob:nonrobust}. In that case, we observe for every optimal solution $I^*$ of the non-robust problem~\eqref{prob:nonrobust} that 
\begin{align*}
\sum_{j \in I^*} (\hat{c}_j + d_j \tilde{z}_j) \ge  \sum_{j \in I^*} \hat{c}_j  + \sum_{j \in  \text{supp}(\tilde{\bz}) \cap I^*} d_j \tilde{z}_j > \sum_{j \in I^*} \hat{c}_j  = \min_{I \in \mathcal{I}}  \sum_{j \in I} \hat{c}_j,
\end{align*}
Indeed, the first inequality follows from algebra. The strict inequality follows from the fact that $\text{supp}(\tilde{\bz}) \cap I^* \neq \emptyset$, the fact that $\bd > \bzero$, and the definition of $\text{supp}(\tilde{\bz})$. The equality follows from the definition of $I^*$. Moreover, we observe for every non-optimal solution $I'$ of the non-robust problem~\eqref{prob:nonrobust} that
\begin{align*}
\sum_{j \in I'} (\hat{c}_j + d_j \tilde{z}_j) \ge \sum_{j \in I'} \hat{c}_j > \min_{I \in \mathcal{I}} \sum_{j \in I} \hat{c}_j
\end{align*}
where the inequality follows from the fact that $\bd > \bzero$ and the strict inequality follows from the fact that $I'$ is not an optimal solution for \eqref{prob:nonrobust}. We have thus  shown in all cases that
\begin{align*}
\min_{I \in \mathcal{I}} \sum_{j \in I} (\hat{c}_j + d_j \tilde{z}_j)  > \min_{I \in \mathcal{I}} \sum_{j \in I} \hat{c}_j.
\end{align*}
 We conclude that  $\tilde{\bz} \in \mathcal{Z}_\eta$ for all $\eta  \in \mathbb{H} \cap (\min_{I \in \mathcal{I}} \sum_{j \in I} \hat{c}_j,\min_{I \in \mathcal{I}}\sum_{j \in I} (\hat{c}_j + d_j \tilde{z}_j) ] \neq \emptyset$, and so it follows from the fact that $\tilde{\bz}$ is an  optimal solution of $\max_{\bz \in \mathcal{Z}} \sum_{j \in \tilde{I}}(\hat{c}_j + d_j z_j)$ that
\begin{align*}
\max_{\bz \in \mathcal{Z}_\eta}   \sum_{j \in \tilde{I}}  \left( \hat{c}_j + d_j z_j  \right) = \max_{\bz \in \mathcal{Z}}   \sum_{j \in \tilde{I}}  \left( \hat{c}_j + d_j z_j  \right)
\end{align*}

To show the other direction, consider any arbitrary vector $\tilde{\bz}$ that is an optimal solution of $\max_{\bz \in \mathcal{Z}} \sum_{j \in \tilde{I}} (\hat{c}_j + d_j z_j)$, and suppose that there exists  an optimal solution $I^*$ of the non-robust problem~\eqref{prob:nonrobust} that satisfies $\text{supp}(\tilde{\bz}) \cap I^* = \emptyset$. We observe for each $\eta \in \mathbb{H}$ that  
\begin{align*}
\mathcal{Z}_\eta \subseteq   \left \{ \bz \in \mathcal{Z}:  \sum_{j \in I^*} (\hat{c}_{j} + d_j z_j)  \ge \eta \right \} \subseteq  \left \{ \bz \in \mathcal{Z}: \sum_{j \in I^*} \left( \hat{c}_j + d_j z_j \right) > \sum_{j \in I^*} \hat{c}_j \right \} =  \left \{ \bz \in \mathcal{Z}: \sum_{j \in I^*} d_j z_j > 0 \right \}, 
\end{align*}
where the first inclusion follows from the definition of $\mathcal{Z}_\eta$,  the second inclusion follows from the fact that $\eta \in \mathbb{H}$ and that $I^*$ is an optimal solution for \eqref{prob:nonrobust}, and the equality follows from the fact that  $\bd > \bzero$ and $\mathcal{Z} \subseteq [0,1]^n$.  Moreover, it follows from the fact that $\text{supp}(\tilde{\bz}) \cap I^* = \emptyset$ that $\sum_{j \in I^*} d_j z_j = 0$, which implies that $\tilde{\bz} \notin \mathcal{Z}_\eta$. 
Since the optimal solution $\tilde{\bz}$ of $\max_{\bz \in \mathcal{Z}} \sum_{j \in \tilde{I}} (\hat{c}_j + d_j z_j)$ was chosen arbitrarily, we conclude that if for every optimal solution $\tilde{\bz}$ of $\max_{\bz \in \mathcal{Z}} \sum_{j \in \tilde{I}} (\hat{c}_j + d_j z_j)$ there exists  an optimal solution $I^*$ of the non-robust problem~\eqref{prob:nonrobust} that satisfies $\text{supp}(\tilde{\bz}) \cap I^* = \emptyset$, then
\begin{align*}
\max_{\bz \in \mathcal{Z}_\eta}   \sum_{j \in \tilde{I}}  \left( \hat{c}_j + d_j z_j  \right) < \max_{\bz \in \mathcal{Z}}   \sum_{j \in \tilde{I}}  \left( \hat{c}_j + d_j z_j  \right)
\end{align*}
\end{proof}

\begin{proof}[Proof of Corollary~\ref{cor:extreme}]
Let  $I^{\text{RO}}$ denote an optimal solution of the robust optimization problem~\eqref{prob:lp_robust}.

Suppose that $I^{\text{RO}}$ satisfies condition \ref{suffcond:a}, meaning that $|I^{\text{RO}}| \ge \Gamma$ and there exists an optimal solution $I^*$ for the non-robust problem~\eqref{prob:nonrobust} that satisfies $I^{\text{RO}} \cap I^* = \emptyset$. Consider any arbitrary $\tilde{\bz} \in \argmax_{\bz \in \mathcal{Z}} \sum_{j \in I^{\text{RO}}} (\hat{c}_j + d_j z_j)$. It follows from the fact that $\bd > \bzero$, the fact that $\Gamma$ is an integer, the fact that $|I^{\text{RO}}| \ge \Gamma$, and the definition of the budget uncertainty set $\mathcal{Z} = \{\bz \in [0,1]^n: \sum_{j =1}^n z_j \le \Gamma\}$ that $\tilde{\bz}$ can satisfy $\tilde{z}_j > 0$ only if $j \in I^{\text{RO}}$. This implies that $\text{supp}(\tilde{\bz}) \subseteq I^{\text{RO}}$, and so it follows from the fact that $I^{\text{RO}} \cap I^* = \emptyset$ that $\text{supp}(\tilde{\bz}) \cap I^* = \emptyset$. Since $\tilde{\bz}$ was chosen arbitrarily, we conclude for all $\tilde{\bz} \in \argmax_{\bz \in \mathcal{Z}} \sum_{j \in I^{\text{RO}}} (\hat{c}_j + d_j z_j)$ that $\text{supp}(\tilde{\bz}) \cap I^* = \emptyset$, and so Theorem~\ref{thm:extreme} implies that $I^{\text{RO}}$ satisfies \eqref{line:unnec}. 

Alternatively, suppose that $I^{\text{RO}}$ satisfies condition \ref{suffcond:b}. Let $\tilde{\bz}$ denote the unique optimal solution for $\max_{\bz \in \mathcal{Z}} \sum_{j \in I^{\text{RO}}} (\hat{c}_j + d_j z_j)$, and let  $I^*$ denote the optimal solution of the non-robust problem~\eqref{prob:nonrobust} that satisfies $\text{supp}(\tilde{\bz}) \cap I^* = \emptyset$. Since $\tilde{\bz}$ is the unique optimal solution, it follows immediately from Theorem~\ref{thm:extreme} that \eqref{line:unnec} is satisfied. 

In summary, we have shown that if condition~\ref{suffcond:a} or \ref{suffcond:b} is satisfied, then $I^{\text{RO}}$ satisfies \eqref{line:unnec}. We thus conclude for each $\eta \in \mathbb{H}$ that 
\begin{align*}
\min_{I \in \mathcal{I}} \max_{\bz \in \mathcal{Z}}   \sum_{j \in I}  \left( \hat{c}_j + d_j z_j  \right) &= \max_{\bz \in \mathcal{Z}}   \sum_{j \in I^{\text{RO}}}  \left( \hat{c}_j + d_j z_j  \right) > \max_{\bz \in \mathcal{Z}_\eta}   \sum_{j \in I^{\text{RO}}}  \left( \hat{c}_j + d_j z_j  \right) \ge \min_{I \in \mathcal{I}} \max_{\bz \in \mathcal{Z}_\eta}   \sum_{j \in I}  \left( \hat{c}_j + d_j z_j  \right)
\end{align*}
where the equality holds because $I^{\text{RO}}$ is an optimal solution for the robust optimization problem~\eqref{prob:lp_robust}, the first inequality follows from the fact that $I^{\text{RO}}$ satisfies \eqref{line:unnec} and the fact that $\eta \in \mathbb{H}$, and the second inequality holds because $I^{\text{RO}}$ is a feasible but possibly suboptimal solution for the reduced robust optimization problem~\eqref{prob:lp_eta}.

\end{proof}

\begin{proof}[Proof of Proposition~\ref{prop:broader}.]
It follows from Lemma~\ref{lem:lp_relationships} that if  $\eta = \min_{I \in \mathcal{I}} \sum_{j \in I} \hat{c}_j  $, then $\mathcal{Z} = \mathcal{Z}_\eta$. Moreover, if either of the sufficient conditions~\ref{suffcond:a} or \ref{suffcond:b} from Corollary~\ref{cor:extreme} is satisfied,  then it follows from Theorem~\ref{thm:extreme} that \looseness=-1
\begin{align*}
\max_{\bz \in \mathcal{Z}_\eta}   \sum_{j \in I^{\text{RO}}}  \left( \hat{c}_j + d_j z_j  \right) < \max_{\bz \in \mathcal{Z}}   \sum_{j \in I^{\text{RO}}}  \left( \hat{c}_j + d_j z_j  \right)  \quad \forall \eta \in \mathbb{H}
\end{align*}
Combining the above reasoning with the definitions of $\mathcal{Z}_\eta$ from \eqref{line:comb_unc_eta} and $\mathbb{H}$ from \eqref{line:H}, we conclude that\looseness=-1 
\begin{align*}
\argmax_{\bz \in \mathcal{Z}}  \sum_{j \in I^{\text{RO}}}  \left( \hat{c}_j + d_j z_j  \right) &= \argmax_{\bz \in \mathcal{Z} \setminus \mathcal{Z}_\eta}  \sum_{j \in I^{\text{RO}}}  \left( \hat{c}_j + d_j z_j  \right) \quad \forall \eta \in \mathbb{H}\\
&= \bigcap_{\eta \in \mathbb{H}} \argmax_{\bz \in \mathcal{Z} \setminus \mathcal{Z}_\eta}  \sum_{j \in I^{\text{RO}}}  \left( \hat{c}_j + d_j z_j  \right) \\
&\subseteq \left \{ \bz \in \mathcal{Z}: \min_{I \in \mathcal{I}} \sum_{j \in I} (\hat{c}_{j} + d_j z_j) =   \min_{I \in \mathcal{I}} \sum_{j \in I} \hat{c}_j  \right \}
\end{align*}
which completes the proof of Proposition~\ref{prop:broader}.
\end{proof}

\begin{proof}[Proof of Lemma~\ref{lem:reduced_comb}]
We  observe that the reduced uncertainty set can be rewritten as 
\begin{align*}
\mathcal{Z}_\eta &= \left \{  \bz \in \mathcal{Z}:   \sum_{j \in I}  \left( \hat{c}_j + d_j z_j  \right)  \ge \eta \; \forall I \in \mathcal{I} \right  \} = \left \{  \bz \in \mathcal{Z}:   \sum_{j \in I}  d_j z_j   \ge \eta -  \sum_{j \in I}  \hat{c}_j  \; \forall I \in \mathcal{I} \right  \} \\
&= \left \{  \bz \in \mathcal{Z}:   \sum_{j \in I}  d_j z_j   \ge  \eta -  \sum_{j \in I}  \hat{c}_j   \; \forall I \in \mathcal{I}_\eta \right  \}. 
\end{align*}
The first equality follows from the definition of $\mathcal{Z}_\eta$. The second equality follows from algebra. The third equality follows from the fact that $\mathcal{Z} \subseteq [0,1]^n$ and $\bd > \bzero$, and the fact that $\eta - \sum_{j \in I}  \hat{c}_j > 0$ if and only if $I \in \mathcal{I}_\eta$.\looseness=-1
\end{proof}

\begin{proof}[Proof of Proposition~\ref{prop:hitting}]
Our proof will make use of the following claim. 
\begin{claim} \label{claim:company}
If  $\eta \in \mathbb{H}$ and $\textsc{HittingSetNum}(\mathcal{I}_\eta) > \Gamma$, then $\mathcal{Z}_\eta \cap \{0,1\}^n = \emptyset$. 
\end{claim}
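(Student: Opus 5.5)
We argue by contradiction. Suppose $\eta \in \mathbb{H}$, $\textsc{HittingSetNum}(\mathcal{I}_\eta) > \Gamma$, and there is a binary vector $\bz \in \mathcal{Z}_\eta \cap \{0,1\}^n$. The plan is to show that the support of $\bz$ is a hitting set for $\mathcal{I}_\eta$ whose cardinality is at most $\Gamma$, which is impossible.

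First we bound the cardinality. Since $\bz \in \mathcal{Z}_\eta \subseteq \mathcal{Z}$, the budget constraint gives $\sum_{j=1}^n z_j \le \Gamma$. Because $\bz$ is binary, this sum equals $|\textnormal{supp}(\bz)|$, so $|\textnormal{supp}(\bz)| \le \Gamma$.

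Next we show that $\bz$ is feasible for the hitting set problem \eqref{line:hittingset}. By Lemma~\ref{lem:reduced_comb}, membership $\bz \in \mathcal{Z}_\eta$ means that for every $I \in \mathcal{I}_\eta$ we have $\sum_{j \in I} d_j z_j \ge \eta - \sum_{j \in I} \hat{c}_j$. By the definition \eqref{prob:nonrobust_old} of $\mathcal{I}_\eta$, the right-hand side is strictly positive. Hence $\sum_{j \in I} d_j z_j > 0$, so some $j \in I$ satisfies $z_j = 1$, which gives $\sum_{j \in I} z_j \ge 1$. Since this holds for every $I \in \mathcal{I}_\eta$, $\bz$ is feasible for \eqref{line:hittingset}.

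Combining the two steps, $\textsc{HittingSetNum}(\mathcal{I}_\eta) \le \sum_j z_j \le \Gamma$, which contradicts the hypothesis. Therefore $\mathcal{Z}_\eta \cap \{0,1\}^n = \emptyset$.

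There is no real obstacle. The only point needing care is the strictness in the definition of $\mathcal{I}_\eta$, which is what turns the reduced-set constraint into a genuine hitting condition. The assumption $\eta \in \mathbb{H}$ is used only to guarantee that $\mathcal{I}_\eta$ is nonempty, as noted after \eqref{prob:nonrobust_old}; the argument above remains valid even without it.
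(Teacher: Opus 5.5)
Your proof is correct and is essentially the paper's argument. Both use Lemma~\ref{lem:reduced_comb} and the strict positivity in the definition of $\mathcal{I}_\eta$ to show that any binary $\bz \in \mathcal{Z}_\eta$ is feasible for \eqref{line:hittingset} with $\sum_j z_j \le \Gamma$; the paper phrases this as an inclusion into an empty set rather than as a contradiction.
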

\begin{proof}[Proof of Claim~\ref{claim:company}]
Let $\eta \in \mathbb{H}$. We observe that
\begin{align}
\mathcal{Z}_\eta  \cap \{0,1\}^n =  \left \{  \bz \in \mathcal{Z} \cap \{0,1\}^n:   \sum_{j \in I}  d_j z_j   \ge \eta -  \sum_{j \in I}  \hat{c}_j  \forall I \in \mathcal{I}_\eta \right  \} \subseteq \left \{  \bz \in \{0,1\}^n: \begin{aligned}
&  \sum_{j \in I}   z_j   >0  \forall I \in \mathcal{I}_\eta\\
& \sum_{j = 1}^n z_j \le \Gamma
\end{aligned} \right  \} \label{line:bs_line}
\end{align}
where the equality follows from Lemma~\ref{lem:reduced_comb} and the set inclusion follows from the definition of $\mathcal{I}_\eta$, the fact that $\bd > \bzero$, and the definition of the uncertainty set $\mathcal{Z}$ from line~\eqref{line:budget_uncertaintyset}. If  $\textsc{HittingSetNum}(\mathcal{I}_\eta) > \Gamma$, then it follows from line~\eqref{line:hittingset} that the right-most set in line~\eqref{line:bs_line} is empty. This completes the proof of Claim~\ref{claim:company}. 
\end{proof}
Equipped with the above intermediary claim, our proof of Proposition~\ref{prop:hitting} is as follows. Let $\eta \in \mathbb{H}$  satisfy $\textsc{HittingSetNum}(\mathcal{I}_\eta) > \Gamma$, and 
suppose that $I^{\text{RO}}$ is an optimal solution of the robust optimization problem~\eqref{prob:lp_robust} for which $\max_{\bz \in \mathcal{Z}} \sum_{j \in I^{\text{RO}}} (\hat{c}_j + d_j z_j)$ has a unique solution. Let $\tilde{\bz} \in \argmax_{\bz \in \mathcal{Z}} \sum_{j \in I^{\text{RO}}} (\hat{c}_j + d_j z_j)$ denote the unique worst-case realization. Because it is unique, and because $\Gamma$ is an integer, $\tilde{\bz}$ must be  an extreme point of $\mathcal{Z}$ and thus must satisfy $\tilde{\bz} \in \{0,1\}^n$. %
It thus follows from Claim~\ref{claim:company} that $\tilde{\bz} \notin \mathcal{Z}_\eta$, which implies that 
\begin{align*}
\max_{\bz \in \mathcal{Z}_\eta}   \sum_{j \in I^{\text{RO}}}  \left( \hat{c}_j + d_j z_j  \right) <  \max_{\bz \in \mathcal{Z}}   \sum_{j \in I^{\text{RO}}}  \left( \hat{c}_j + d_j z_j  \right) = \min_{I \in \mathcal{I}} \max_{\bz \in \mathcal{Z}}   \sum_{j \in I}  \left( \hat{c}_j + d_j z_j  \right). 
\end{align*}
This completes the proof of Proposition~\ref{prop:hitting}. 
\end{proof}


\section{Extended Numerical Results from \S\ref{sec:assortment:results}} \label{appx:assortment_extra}
Figure~\ref{fig:assortment_big} is an expanded version of Figure~\ref{fig:assortment_small} from  \S\ref{sec:assortment:results} to show the nominal curves for a larger collection of assortments. Specifically, Figure~\ref{fig:assortment_big} shows the nominal curves for all assortments $S \in \mathscr{S}$ that satisfy $4 \in S$. It is sufficient to consider only the  assortments that satisfy $4 \in S$ because adding the most expensive product to an assortment will not decrease its performance. Figure~\ref{fig:assortment_big} shows that the assortment $\{0,2,3,4\}$ is optimal for the reduced robust optimization problem for all $\$23.875\le \eta \le \tilde{\eta}$ where $\tilde{\eta} \approx \$33.778$. The y-axis in Figure~\ref{fig:assortment_big} is clipped at \$15. \looseness=-1
\afterpage{%
\null
\vfill
       \begin{figure}[H]
        \centering
          \vspace*{\fill}
        \includegraphics[width=0.95\textwidth]{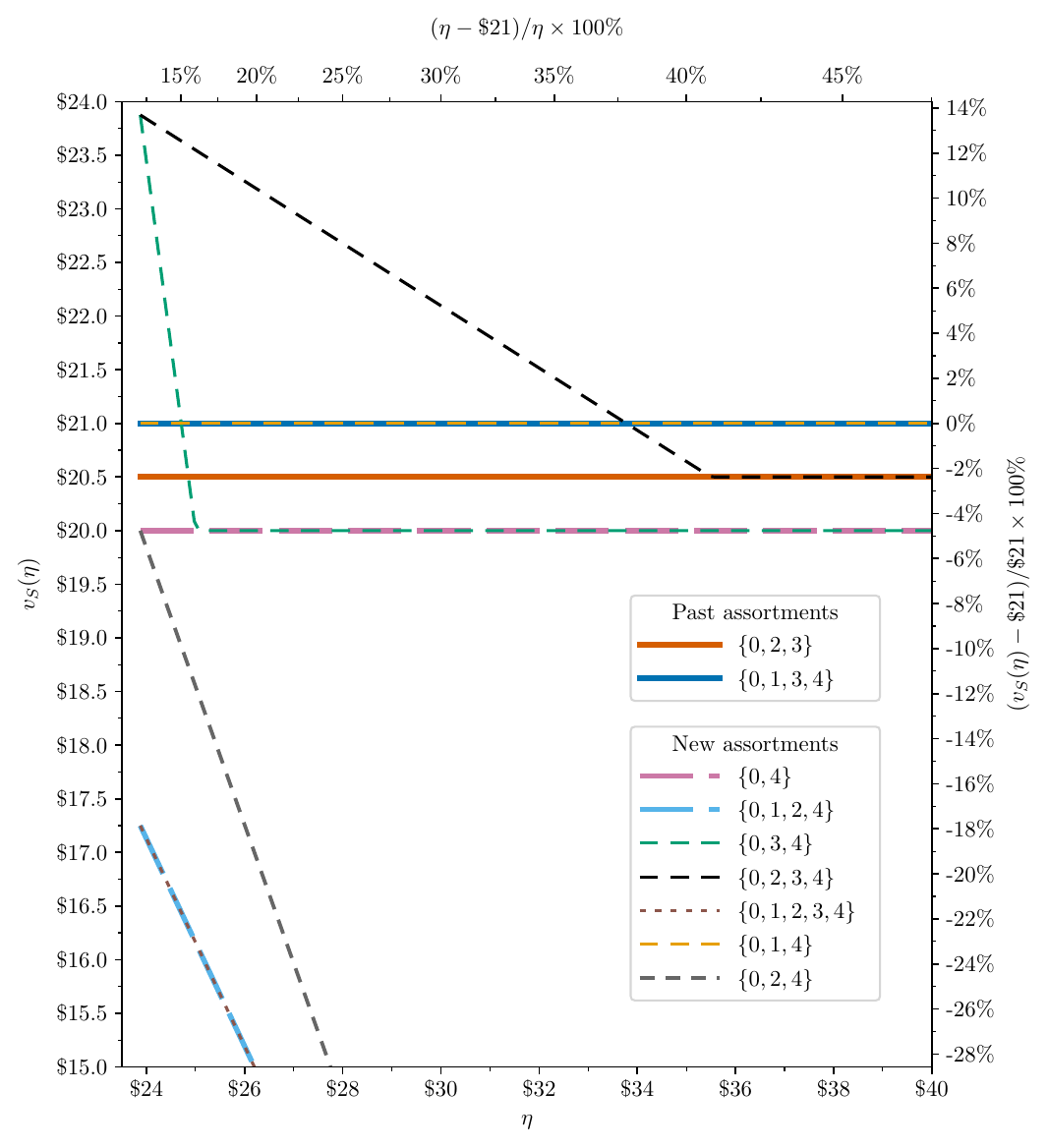}
        \caption{Additional nominal curves from numerical example in \S\ref{sec:assortment:results}.\looseness=-1}
         \label{fig:assortment_big}
  \vspace*{\fill}
    \end{figure}
    \null
\vfill
    \clearpage
    }
    \clearpage


\section{Omitted Proofs from \S\ref{sec:algorithm:reform}} \label{appx:messy_reforms}
 In this appendix, we consider the setting described in \S\ref{sec:algorithm:reform} where $\mathcal{X} = \{\bx \in \{0,1\}^n: \bba \bx \le \bb \} \neq \emptyset$  satisfies $\conv(\mathcal{X}) = \{ \bx \in \R^n: \bba \bx \le \bb \}$,  the uncertainty set  is a bounded polyhedron $\mathcal{U} = \{\btheta \in \R^p: \bbd \btheta \ge \bg \} \neq \emptyset$, and the dimensions are $\bba \in \R^{m \times n}$, $\bb \in \R^m$, $\bbd \in \R^{q \times p}$, $\bg \in \R^q$,  $\bbu, \bbl \in \R^{n \times p}$, and $\mu,\lambda \in \R$. Our main reformulation steps are contained in the following Lemma~\ref{lem:reformulation}. After presenting and proving Lemma~\ref{lem:reformulation}, we present the proofs of Propositions~\ref{prop:reform:simple} and \ref{prop:reform:general}. 
 \begin{lemma} \label{lem:reformulation}
 If $\mathcal{U}_\eta$ is nonempty and $\bx^\intercal \bbl {\btheta} + \lambda > 0$ for all $\bx \in \mathcal{X}$ and $\btheta \in \mathcal{U}$, then
 \begin{align*}
\max_{\bx \in \mathcal{X}}\min_{\btheta \in \mathcal{U}_\eta}  \frac{\bx^\intercal \bbu {\btheta} + \mu}{\bx^\intercal \bbl {\btheta} + \lambda} &=  \left[ \begin{aligned}
&\underset{ \bx \in \{0,1\}^n, t \in \R, \bpsi \in \R^q, \by \in \R^n, \sigma \in \R, \bz \in \R^n}{\textnormal{maximize}} &&    t\\
&\textnormal{subject to}&&
t \lambda - \mu \le \bg^\intercal \bpsi - (\eta \lambda - \mu) \sigma\\
    &&&  \bbd^\intercal \bpsi - \left( \bbu - \eta \bbl \right)^\intercal \by
        = \bbu^\intercal \bx  -  \bbl^\intercal \bz  \\
        &&& \bz = t \bx\\
        &&& \bba \bx \le \bb\\
 &&&     \bba \by \le \bb \sigma\\
 &&&  \bpsi \ge \bzero,\sigma \ge 0
    \end{aligned}\right] 
\end{align*}
 \end{lemma}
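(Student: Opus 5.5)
We would prove Lemma~\ref{lem:reformulation} by applying linear programming duality twice, after first turning the fractional inner objective into a linear constraint.

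\textbf{Step 1: linearize the fraction.} Fix $\bx \in \mathcal{X}$. Because $\bx^\intercal \bbl \btheta + \lambda > 0$ on $\mathcal{U} \supseteq \mathcal{U}_\eta$, the inner value $\min_{\btheta \in \mathcal{U}_\eta} (\bx^\intercal \bbu \btheta + \mu)/(\bx^\intercal \bbl \btheta + \lambda)$ equals the largest $t$ such that
$$(\bbu^\intercal \bx - t\, \bbl^\intercal \bx)^\intercal \btheta \ge t\lambda - \mu \quad \text{for all } \btheta \in \mathcal{U}_\eta.$$
Substituting $\bz = t\bx$ makes the left-hand side $(\bbu^\intercal\bx - \bbl^\intercal \bz)^\intercal \btheta$. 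The condition therefore becomes
$$\min_{\btheta \in \mathcal{U}_\eta} (\bbu^\intercal\bx - \bbl^\intercal\bz)^\intercal\btheta \ge t\lambda-\mu,$$
which is the minimum of a linear function over $\mathcal{U}_\eta$.

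\textbf{Step 2: describe $\mathcal{U}_\eta$ compactly.} By definition,
$$\mathcal{U}_\eta = \{\btheta : \bbd\btheta \ge \bg,\ \bx'^\intercal \bbu\btheta + \mu \le \eta(\bx'^\intercal\bbl\btheta + \lambda)\ \forall \bx' \in \mathcal{X}\},$$
where positivity of the denominator is used to clear it. The second family of constraints is equivalent to
$$\max_{\bx' \in \mathcal{X}} \bx'^\intercal(\bbu - \eta\bbl)\btheta \le \eta\lambda - \mu.$$
Since $\conv(\mathcal{X}) = \{\bx' : \bba\bx' \le \bb\}$, this maximum is an LP over that polyhedron. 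The LP is feasible and bounded because $\mathcal{X}$ is nonempty and finite, so its polytope is the bounded hull. LP duality then gives
$$\max_{\bx'} \bx'^\intercal(\bbu-\eta\bbl)\btheta = \min\{\bb^\intercal \bbu' : \bba^\intercal \bbu' = (\bbu-\eta\bbl)\btheta,\ \bbu' \ge \bzero\}.$$
Consequently $\mathcal{U}_\eta$ is the projection onto $\btheta$ of a polyhedron in $(\btheta, \text{multiplier})$-space:
$$\bbd\btheta \ge \bg,\qquad \bba^\intercal \bbu' = (\bbu-\eta\bbl)\btheta,\qquad \bb^\intercal \bbu' \le \eta\lambda - \mu,\qquad \bbu' \ge \bzero.$$
Here $\bbu'$ is a dummy multiplier name; in the proof we would use a fresh symbol such as $\boldsymbol\beta$ to avoid clashing with the matrix $\bbu$.

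\textbf{Step 3: dualize the inner minimization.} Over this extended polyhedron, the problem $\min (\bbu^\intercal\bx - \bbl^\intercal\bz)^\intercal\btheta$ is a feasible LP, since $\mathcal{U}_\eta \ne \emptyset$, and it is bounded, since $\mathcal{U}$ is bounded. Strong duality therefore applies. We assign multipliers $\bpsi \ge \bzero$ to $\bbd\btheta \ge \bg$, $\by$ (free) to the equality constraints, and $\sigma \ge 0$ to $\bb^\intercal\bbu' \le \eta\lambda - \mu$.
\begin{itemize}
\item The dual objective is $\bg^\intercal\bpsi - (\eta\lambda-\mu)\sigma$.
\item Stationarity in $\btheta$ gives $\bbd^\intercal\bpsi - (\bbu-\eta\bbl)^\intercal\by = \bbu^\intercal\bx - \bbl^\intercal\bz$, up to the sign convention chosen for $\by$.
\item Stationarity in the multiplier $\bbu' \ge \bzero$ gives $\bba\by \le \bb\sigma$.
\end{itemize}
So the condition from Step 1 holds if and only if there exist $(\bpsi,\by,\sigma)$ satisfying these constraints with $t\lambda - \mu \le \bg^\intercal\bpsi - (\eta\lambda-\mu)\sigma$.

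\textbf{Step 4: assemble.} Maximizing $t$ jointly over $\bx \in \{0,1\}^n$ with $\bba\bx \le \bb$ and over the dual variables yields exactly the stated program, with $\bz = t\bx$ kept as a bilinear constraint. The supremum over $t$ is attained because the inner minimum is attained (Assumption~\ref{ass:main}).

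\textbf{Main obstacle.} We expect the hardest part to be the sign bookkeeping in Step 3. The sign of $\by$ must be fixed so that the constraint reads $\bba\by \le \bb\sigma$, rather than the same inequality with $-\by$, and matches the stated equality. We would resolve this by writing the free-variable dual carefully and, if necessary, replacing $\by$ with $-\by$.

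We also need to justify the equivalence in Step 1 in both directions. The step from the fractional constraint to the linear one uses positivity of $\bx^\intercal\bbl\btheta + \lambda$, and the step back recovers $t$ as the inner minimum because the minimum is attained.
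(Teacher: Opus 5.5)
Your proposal is correct and follows essentially the same route as the paper. The paper also builds an LP-duality extended formulation of $\mathcal{U}_\eta$ (its multiplier is $\bgamma$), rewrites the fractional constraint as $t\lambda - \mu \le \min_{\btheta \in \mathcal{U}_\eta} \bx^\intercal(\bbu - t\bbl)\btheta$, dualizes that inner LP, and keeps $\bz = t\bx$ as the bilinear term. Your sign worry is moot: attaching the free multiplier $\by$ to $\bba^\intercal \bgamma - (\bbu - \eta\bbl)\btheta = \bzero$ yields exactly $\bbd^\intercal\bpsi - (\bbu - \eta\bbl)^\intercal\by = \bbu^\intercal\bx - \bbl^\intercal\bz$ and $\bba\by \le \bb\sigma$, with no flip needed.
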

\begin{proof}Our proof follows from applying strong duality once to reformulate the reduced uncertainty set $\mathcal{U}_\eta$ as a polyhedron with a polynomial number of constraints, followed by dualizing the inner problem of the reduced robust optimization problem~\eqref{prob:fraction_eta}. Indeed, we  observe for each  $\btheta \in \mathcal{U}$ that
\begin{align*}
\max_{\bx \in \mathcal{X}} \frac{\bx^\intercal \bbu {\btheta} + \mu}{\bx^\intercal \bbl {\btheta} + \lambda} \le \eta &\iff \max_{\bx \in \mathcal{X}}  \left \{  \bx^\intercal \left( \bbu  - \eta \bbl \right) {\btheta}  \right \} \le \eta \lambda - \mu\\
&\iff \max_{\bx \in \conv( \mathcal{X})}  \left \{  \bx^\intercal \left( \bbu  - \eta \bbl \right) {\btheta}  \right \} \le \eta \lambda - \mu\\
&\iff \max_{\bx \in \R^n: \bba \bx \le \bb}  \left \{  \bx^\intercal \left( \bbu  - \eta \bbl \right) {\btheta}  \right \} \le \eta \lambda - \mu\\
&\iff \exists \bgamma \ge \bzero \text{ such that }  \bb^\intercal \bgamma  \le \eta \lambda - \mu \text{ and }\bba^\intercal \bgamma = \left(  \bbu  - \eta \bbl \right) {\btheta}  
\end{align*}
where the first line follows from algebra and the assumptions that $\bx^\intercal \bbl {\btheta} + \lambda > 0$ for all $\bx \in \mathcal{X}$ and $\btheta \in \mathcal{U}$, the second line is the fundamental theorem of linear programming, the third line follows from the earlier assumption that $\conv(\mathcal{X}) = \{ \bx \in \R^n: \bba \bx \le \bb \}$, and the fourth line follows from strong duality. The above implies that the reduced uncertainty set admits a polynomial-size extended formulation, i.e., 
\begin{align*}
    \mathcal{U}_\eta = \left \{ \btheta \in \R^p:\;\; \begin{aligned}
        &\bbd \btheta \ge \bg \\
&\max_{\bx \in \mathcal{X}} \frac{\bx^\intercal \bbu {\btheta} + \mu}{\bx^\intercal \bbl {\btheta} + \lambda} \le \eta    \end{aligned} \right \} 
        =  \left \{\btheta \in \R^p: \begin{aligned}
&\bbd \btheta \ge \bg \\
&\exists \bgamma \ge \bzero \text{ such that } \bb^\intercal \bgamma  \le \eta \lambda - \mu \text{ and }\bba^\intercal \bgamma = \left(  \bbu  - \eta \bbl \right) {\btheta} 
\end{aligned} \right \}.
\end{align*}
We observe that
\begin{align}
\max_{\bx \in \mathcal{X}}\min_{\btheta \in \mathcal{U}_\eta}  \frac{\bx^\intercal \bbu {\btheta} + \mu}{\bx^\intercal \bbl {\btheta} + \lambda} &=\left [ \begin{aligned}
&\underset{\bx \in \mathcal{X}, t \in \R}{\text{maximize}}&& t \\
&\text{subject to}&& t \le \min_{\btheta \in \mathcal{U}_\eta}  \frac{\bx^\intercal \bbu {\btheta} + \mu}{\bx^\intercal \bbl {\btheta} + \lambda} 
\end{aligned} \right] \notag \\
&=\left [ \begin{aligned}
&\underset{\bx \in \mathcal{X}, t \in \R}{\text{maximize}}&& t \\
&\text{subject to}&& t \lambda - \mu  \le \min_{\btheta \in \mathcal{U}_\eta} \bx^\intercal\left( \bbu - t \bbl \right) {\btheta} 
\end{aligned} \right]  \label{line:messy_outer}
\end{align}
Using the polyhedral representation of $\mathcal{U}_\eta$ developed above, we observe for each $\bx \in \mathcal{X}$ and $t \in \R$ that 
\begin{align}
\min_{\btheta \in \mathcal{U}_\eta} \bx^\intercal\left( \bbu - t \bbl \right) {\btheta}  = \left[ \begin{aligned}
&\underset{\bpsi \in \R^q, \by \in \R^n, \sigma \in \R}{\text{maximize}} &&    \bg^\intercal \bpsi - (\eta \lambda - \mu) \sigma\\
&\text{subject to}&&
      \bbd^\intercal \bpsi - \left( \bbu - \eta \bbl \right)^\intercal \by
        = \left( \bbu - t \bbl \right)^\intercal \bx, \\
 &&&     \bba \by \le \bb \sigma\\
 &&&  \bpsi \ge \bzero,\sigma \ge 0
    \end{aligned}\right] \label{line:messy_internal_dual}
\end{align}
Combining \eqref{line:messy_outer} and \eqref{line:messy_internal_dual} and letting $\bz = t \bx$ denote the bilinear term, we obtain the desired result.
\end{proof}

\begin{proof}[Proof of Proposition~\ref{prop:reform:simple}]
In the special case of $n = p$, $\bbl = \bzero$, $\bbu$ is the identity matrix, $\lambda = 1$, and $\mu = 0$, the optimization problem from Lemma~\ref{lem:reformulation} becomes
\begin{align*}
\left[ \begin{aligned}
&\underset{ \bx \in \{0,1\}^n, \bpsi \in \R^q, \by \in \R^n, \sigma \in \R}{\text{maximize}} &&   \bg^\intercal \bpsi - \eta \sigma\\
&\text{subject to}&&  \bbd^\intercal \bpsi -  \by
        = \bx\\
        &&& \bba \bx \le \bb\\
 &&&     \bba \by \le \bb \sigma\\
 &&&  \bpsi \ge \bzero,\sigma \ge 0
    \end{aligned}\right] 
    \end{align*}
    Substituting $\by = \bbd^\intercal \bpsi - \bx$ and rearranging terms,  we obtain the desired reformulation in Proposition~\ref{prop:reform:simple}. 
    \end{proof}
    
    \begin{proof}[Proof of Proposition~\ref{prop:reform:general}]
    If the optimal value of \eqref{prob:fraction_eta} is in $[\ubar{t},\bar{t}]$ for $0 \le \ubar{t} \le \bar{t}$, then we observe that the bilinear term 
    \begin{align*}
    \bz = t \bx
    \end{align*}
    in the reformulation from Lemma~\ref{lem:reformulation} can be replaced with the McCormick envelopes 
    \begin{alignat*}{2}
    \ubar{t} & \le t \le \bar{t}\\
    \ubar{t} x_i &\le z_i \le \bar{t}x_i &\quad& \forall i \in \{1,\ldots,n\}\\
    t - \bar{t} (1-x_i) &\le z_i   \le t - \ubar{t} (1-x_i) &\quad& \forall i \in \{1,\ldots,n\}
    \end{alignat*}
    Substituting in those constraints, we obtain the desired reformulation in Proposition~\ref{prop:reform:general}. 
    \end{proof}

\end{document}